\numberwithin{equation}{section}
\newcommand{\be}{\begin{eqnarray}}
\newcommand{\ee}{\end{eqnarray}}
\newcommand{\ce}{\begin{eqnarray*}}
\newcommand{\de}{\end{eqnarray*}}
\newtheorem{theorem}{Theorem}[section]
\newtheorem{lemma}[theorem]{Lemma}
\newtheorem{remark}[theorem]{Remark}
\newtheorem{definition}[theorem]{Definition}
\newtheorem{proposition}[theorem]{Proposition}
\newtheorem{Examples}[theorem]{Example}
\newtheorem{corollary}[theorem]{Corollary}
\def\nor{|\mspace{-3mu}|\mspace{-3mu}|}
\def\eps{\varepsilon}
\def\p{\partial}
\def\[{{\Big[}}
\def\]{{\Big]}}
\def\<{{\langle}}
\def\>{{\rangle}}
\def\({{\big(}}
\def\){{\big)}}
\def\bx{{\mathbf{x}}}
\def\sgn{\mbox{\rm sgn}}
\def\dif{{\mathord{{\rm d}}}}
\def\bb2{{\boldsymbol{2}}}
\def\no{\nonumber}
\def\={&\!\!=\!\!&}
\def\bC{{\mathbf C}}
\def\cB{{\mathcal B}}
\def\cE{{\mathcal E}}
\def\cF{{\mathcal F}}
\def\cH{{\mathcal H}}
\def\cL{{\mathcal L}}
\def\cM{{\mathcal M}}
\def\cP{{\mathcal P}}
\def\mE{{\mathbb E}}
\def\mI{{\mathbb I}}
\def\mK{{\mathbb K}}
\def\mH{{\mathbb H}}
\def\mL{{\mathbb L}}
\def\mN{{\mathbb N}}
\def\mP{{\mathbb P}}
\def\mQ{{\mathbb Q}}
\def\mR{{\mathbb R}}
\def\mZ{{\mathbb Z}}
\def\bP{{\mathbf P}}
\def\1{{\mathbf{1}}}
\def\sA{{\mathscr A}}
\def\sB{{\mathscr B}}
\def\sF{{\mathscr F}}
\def\sG{{\mathscr G}}
\def\sI{{\mathscr I}}
\def\sJ{{\mathscr J}}
\def\sL{{\mathscr L}}
\def\geq{\geqslant}
\def\leq{\leqslant}
\def\ge{\geqslant}
\def\le{\leqslant}
\def\eps{\varepsilon}
\def\p{\partial}
\def\[{{\Big[}}
\def\]{{\Big]}}
\def\<{{\langle}}
\def\>{{\rangle}}
\def\bx{{\mathbf{x}}}
\def\sgn{\mbox{\rm sgn}}
\def\dif{{\mathord{{\rm d}}}}
\def\no{\nonumber}
\def\={&\!\!=\!\!&}
\def\bt{\begin{theorem}}
\def\et{\end{theorem}}
\def\bl{\begin{lemma}}
\def\el{\end{lemma}}
\def\br{\begin{remark}}
\def\er{\end{remark}}
\def\bx{\begin{Examples}}
\def\ex{\end{Examples}}
\def\bd{\begin{definition}}
\def\ed{\end{definition}}
\def\bp{\begin{proposition}}
\def\ep{\end{proposition}}
\def\bc{\begin{corollary}}
\def\ec{\end{corollary}}
\def\geq{\geqslant}
\def\leq{\leqslant}
\def\ge{\geqslant}
\def\le{\leqslant}
\def\bP{{\mathbf P}}
 \def\R{\mathbb R}
 \def\R{\mathbb R}    
\def\N{\mathbb N}  
\def\<{\langle} \def\>{\rangle}
\def\X{{\widetilde X}}
\def\X{{\bf X}}
\begin{document}

\title[Averaging principle of DDSDE with singular drift]
{Strong and weak convergence for averaging principle of DDSDE with singular drift}

\author{Mengyu Cheng, Zimo Hao, Michael R\"ockner}

\date{}%\today}

\address{Mengyu Cheng:
Fakult\"at f\"ur Mathematik, Universit\"at Bielefeld, 33615 Bielefeld, Germany;
School of Mathematical Sciences, Dalian University of Technology, Dalian
116024, P. R. China
\newline
Email: mengyucheng@mail.dlut.edu.cn; mengyu.cheng@hotmail.com
 }

\address{Zimo Hao:
Fakult\"at f\"ur Mathematik, Universit\"at Bielefeld,
33615, Bielefeld, Germany;
School of Mathematics and Statistics, Wuhan University,
Wuhan, Hubei 430072, P. R. China
	 \newline
Email: zimohao@whu.edu.cn; zhao@math.uni-bielefeld.de}

\address{Michael R\"ockner:
Fakult\"at f\"ur Mathematik, Universit\"at Bielefeld,
33615, Bielefeld, Germany; Academy of Mathematics and System Sciences, Chinese Academy of Science, Beijing 100190, P. R. China
\newline
Email: roeckner@math.uni-bielefeld.de
 }

\thanks{{\it Keywords: Averaging principle, Distribution dependent SDE, Heat kernel}}

\thanks{
This work is supported by the China Scholarship Council (CSC 202006060083), NNSFC grant of China (No. 12131019, 11731009) and the German Research Foundation (DFG) through the Collaborative Research Centre (CRC) 1283/2 2021 - 317210226
``Taming uncertainty and profiting from randomness and low regularity in analysis, stochastics and their applications''. }

%\address{
%zhao@math.uni-bielefeld.de
% }

%\thanks{
%This work is partially supported by NNSFC grants of China (Nos. 12131019, 11731009), and the German Research Foundation (DFG) through the Collaborative Research Centre(CRC) 1283 ``Taming uncertainty and profiting from randomness and low regularity in analysis, stochastics and their applications".}

\maketitle \rm

%\pagecolor{MyDarkBlue}\color{yellow}

\section*{Abstract}
In this paper, we study the averaging principle for distribution dependent stochastic differential equations
with drift in localized $L^p$ spaces. Using Zvonkin's transformation and estimates for solutions to Kolmogorov equations,
we prove that the solutions of the original system strongly and weakly converge to the solution of the
averaged system as the time scale $\eps$ goes to zero. {Moreover, we obtain rates of the strong and
weak convergence that depend on $p$ respectively.}

\tableofcontents

\section{Introduction}
Consider the following distribution dependent stochastic differential equation (in short, DDSDE):
\begin{align}\label{in:NDDSDE}
\dif X_t=b(t,X_t,\mu_t)\dif t+\Sigma(t,X_t,\mu_t)\dif W_t,
\end{align}
where $b: \mR_+\times\mR^d\times\cP(\mR^d)\to\mR^d$ and $\Sigma:\mR_+\times\mR^d\times\cP(\mR^d)\to\mR^d\otimes\mR^d$ are measurable functions
and $\mu_t:=\cL(X_t)$ is the time marginal law of $X_t$.
Here $\cP(\mR^d)$ denotes the space of all probability measures over $\mR^d$
and $(W_t)_{t\geq 0}$ is a $d$-dimensional standard Brownian motion on some stochastic basis
$(\Omega,\sF,\bP; (\sF_t)_{t\geq 0})$. The {Kolmogorov operator} of \eqref{in:NDDSDE} is given by
\begin{equation*}
\sL_t\varphi(x)=A_{ij}(t,x,\mu_t)\p_i\p_j\varphi(x)+b(t,x,\mu_t)\cdot\nabla\varphi(x),
\end{equation*}
where $A_{ij}(t,x,\mu_t):=\frac{1}{2}\Sigma_{ik}\Sigma_{jk}(t,x,\mu_t)$.
Here and below we use the usual Einstein
convention for summation.

Note that DDSDE \eqref{in:NDDSDE} is also called mean-field SDE or McKean-Vlasov SDE in the literature if
\begin{equation}\label{MVb}
b(t,x,\mu):=\int_{\R^d}\hat{b}(t,x,y)\mu(\dif y),\quad \Sigma(t,x,\mu):=\int_{\R^d}\hat{\Sigma}(t,x,y)\mu(\dif y)
\end{equation}
for all $(t,x,\mu)\in\R_+\times\R^d\times\cP(\R^d)$, where
$\hat{b}:\mR_+\times\mR^{2d}\to\mR^d$ and $\hat{\Sigma}:\mR_+\times\mR^{2d}\to\mR^d\otimes\mR^d$ are measurable functions.
It naturally appears in the studies on the limiting behavior of interacting particle systems and mean-field games. Roughly speaking,
DDSDE (or McKean-Vlasov SDE) describe stochastic systems whose evolution is determined not only by the microcosmic location of the particle, but also by the macrocosmic distribution. So it has attracted wide attention (see \cite{CD13,CD18b,CGPS20,HRZ2021,Mc,Sz} and references therein).

When $b$ and $\Sigma$ satisfy some continuity assumptions, there are numerous results devoted
to studying the existence and uniqueness of this type of DDSDE \eqref{in:NDDSDE}
(see \cite{HSS18, Wang2018} for examples). For the case that $b$ is only measurable,
of at most linear growth and Lipschitz continuous with respect to $\mu$,
when $\Sigma$ does not depend on $\mu$, is uniformly non-degenerate and Lipschitz continuous,
by using the classical Krylov estimates, Mishura and Veretenikov \cite{MV2020} obtained
the strong well-posedness of \eqref{in:NDDSDE}. After that, the strong well-posedness was extended
to local $L^q_tL^p_x$ drift by R\"ockner and Zhang in \cite{RZ21}.
Moreover, by the relative entropy method and Girsanov's theorem, Lacker \cite{Lack2021} also
obtained some well-posedness results for linear growth cases (see also \cite{La18}).
{Then, in the special case \eqref{MVb}, Han obtained the well-posedness for $L^q_tL^p_x$ drift based on
the relative entropy method in \cite{Ha22}.}
In \cite{Zhao2020}, by some heat kernel estimates and Schauder-Tychonoff fixed point theorem,
Zhao established the well-posedness results for DDSDE in a more general case \eqref{in:NDDSDE}
when $b$ satisfies some $L^q_tL^p_x$ condition and the derivatives of $\Sigma$ with respect to
$\mu$ are H\"older continuous. It should be noted that by Zvonkin's transformation of
\cite{Zv} (see also \cite{Xi-Xi-Zh-Zh} and \cite{Zh-Zh1}) and entropy method,
the last two authors together with Zhang in \cite{HRZ22} obtained the strong well-posedness
for \eqref{in:NDDSDE}, where $\Sigma$ does not dependent on $\mu$ and $b$ is in mixed $L^q_tL^p_x$ space. For existence and uniqueness results on weak solutions to Nemytskii-type (=density dependent) DDSDE with merely measurable coefficients we refer to \cite{BR2018,BR2020, BR2022A, BR2022B}. 

In this work, we are interested in investigating the averaging principle of the following
\iffalse
slow-fast system:
\begin{align*}
\begin{cases}
\dif X_t^\eps=b(Y^\eps_t,X^\eps_t,\mu^\eps_t)\dif t+\sigma(X^\eps_t)\dif W_t,\quad X^\eps_0=\xi\in\sF_0,\\
\dif Y_t^\eps=\frac{1}{\eps}\dif t,\quad Y^\eps_0=0,
\end{cases}
\end{align*}
\fi
DDSDE with {highly oscillating time component}
\begin{align}\label{MV1}
\dif X_t^\eps=b\left(\frac{t}{\eps},X_t^\eps,\mu_t^\eps\right)\dif t+\sigma(X_t^\eps)\dif W_t, \ \ X_0^\eps=\xi\in\sF_0,
\end{align}
where $\sigma:\mR^d\to\mR^d\otimes\mR^d$ is a measurable function, $\mu^\eps_t:=\cL(X^\eps_t)$ is
the time marginal law of $X^\eps_t$ and the time scale $0<\eps \ll1$.

Usually, solving the original system \eqref{MV1} is
relatively difficult because of the {highly oscillating time component}. Therefore, it is desirable to find a simplified system which simulates and predicts the evolution of the original system over a long time scale.
As is well known, the {highly oscillating time component} can be ``averaged'' out to produce such a simplified system under some suitable conditions, which is called
averaging principle.

More exactly, let
\begin{align}\label{in:KBM}
\bar{b}(x,\mu):=\lim\limits_{T\to\infty}\frac{1}{T}\int_0^T b(t,x,\mu)\dif t.
\end{align}
If $b$ is independent of $\mu$, $b$ is called
a {\em KBM-vector feld} (KBM stands for Krylov, Bogolyubov and Mitropolsky)
if the convergence \eqref{in:KBM} is uniformly with respect to $x$ on
bounded subsets of $\R^d$ (see e.g. \cite{SV1985}).
The averaging principle states, as the time scale $\eps$ goes to zero, that the solution of the original equation \eqref{MV1} converges
to that of the following averaged equation on any finite time interval
\begin{align}\label{MV2}
\dif X_t=\bar{b}(X_t,\mu_t)\dif t+\sigma(X_t)\dif W_t,\ \ X_0=\xi,
\end{align}
where $\mu_t$ stands for the distribution of $X_t$.

The averaging principle was firstly established for deterministic systems by Krylov, Bogolyubov and
Mitropolsky \cite{BM1961,KB1943}. Then it was extended to stochastic differential equations by
Khasminskii \cite{Khas1968}. After that,
extensive work on the averaging principle for finite and infinite dimensional stochastic
differential equations was done; see e.g.
\cite{BK2004, Cerr2009, CF2009, CL2017, CL2021, DW2014, FW2012,
ML2020, Kif2004, MSV1991, PIX2021, Vere1999,
WR2012} and the references therein.

\iffalse
Note that DDSDE \eqref{MV1} is also called mean-field SDE or McKean-Vlasov SDE in some literatures
if
\begin{equation}\label{MVb}
b(t,x,\mu):=\int_{\R^d}b'(t,x,y)\mu(\dif y)
\end{equation}
for all $(t,x,\mu)\in\R_+\times\R^d\times\cP(\R^d)$, where
$b':\mR_+\times\mR^{2d}\to\mR^d$ is a measurable function.
It naturally appears in the studies on limiting behaviors of interacting particle systems and mean-field games. Roughly speaking,
DDSDE (or McKean-Vlasov SDE) describes stochastic system whose evolution is determined not only by the microcosmic location of the particle, but also by the macrocosmic distribution. So it has attracted wide attention.

Under some monotone conditions, Wang \cite{Wang2018} proved the strong well-posedness and some functional inequalities of solutions for DDSDE.
%the drift is not continuous in $\mu$ with respect to (in short, w.r.t.) the Wasserstein distance, but is Lipschitz w.r.t. the total variation metric.
%In this situation,
%When the drift is in the form of \eqref{MVb} with bounded $b'$,
%Shiga and Tanaka \cite{ST1985} showed the strong well-posedness for McKean-Vlasov SDE with additive noise.
%Then similar result was extended by Jourdain for general bounded drift satisfying a Lipschitz assumption in $\mu$ w.r.t. the total variation metric in \cite{Jour1995}.
When the coefficient is in the form of \eqref{MVb},
Mishura and Veretennikov \cite{MV2020} proved the existence and uniqueness of weak and strong solutions for McKean-Vlasov SDE.
After that, %under total variation Lipschitz conditions,
the results were extended; see e.g. \cite{Lack2021}. Moreover, R\"ockner and Zhang \cite{RZ21} obtained the well-posedness for %McKean-Vlasov SDE with bounded drift by Lacker in \cite{Lack2021} and to
McKean-Vlasov SDE with singular drifts in some localized $L_t^qL_x^p$ spaces. Recently, Zhao extended it to a more general case in \cite{Zhao2020}.
\fi

Recall that the strong convergence rate of the averaging principle for slow-fast McKean-Vlasov SDE
%with global Lipschitz coefficients
was established by the techniques of time discretization and Poisson equation in \cite{RSX2021}.
Furthermore, as discussed in \cite{HLL2022}, the strong convergence rate of the averaging principle for slow-fast McKean-Vlasov SPDE was studied, based on the variational approach and the technique of time discretization.
Note that the coefficients of the slow equation with fast variables were globally Lipschitz continuous
with respect to the slow variable in the above results.
Recently, the strong convergence without a rate for DDSDE with {highly oscillating time component}
driven by fractional Brownian motion and standard Brownian motion was shown in \cite{SXW2022},
which requires that the drift term is continuous in the slow variable.

Obviously, Lipschitz or continuity assumptions on $b$ are too strong for some applications.
There are a lot of interesting models from physics, only having bounded measurable or even singular $L^p$ interaction kernels $b$ and $\hat{b}$. For example, the rank-based interaction studied in \cite{La18} has an indicator kernel, which is discontinuous; the Biot-Savart law appearing in the vortex description of 2-dimensional incompressible Navier-Stokes equations has a singular kernel like $x^\perp/|x|^2$ (see e.g. \cite{Zh2020}). However, to the best of our knowledge, there is no result concerning the averaging principle both of DDSDE and SDE with $L^p$ drift.

Following the above motivations, we consider the strong and weak convergence of the averaging principle
for DDSDE with $L^p$ drift in the present paper. Moreover, we obtain the rate of the strong and weak convergence,
which is important for functional limit theorems in probability and homogenization in PDEs.
Throughout this paper we need the following conditions.

\begin{enumerate}[{\bf (H$_b^1$)}]
\item
Let $p_0\in(d\vee2,\infty)$ and assume that there is a nonnegative constant $\kappa_0$ such that for all $t\ge0$
and $\mu,\nu\in\cP(\R^d)$
$$
\nor b(t,\cdot,\mu)\nor_{p_0}+
\frac{\nor b(t,\cdot,\mu)-b(t,\cdot,\nu)\nor_{p_0}}{\|\mu-\nu\|_{var}}
\leq\kappa_0,$$
where
$\|\mu-\nu\|_{var}:=\sup\limits_{A\in\sB(\mR^d)}|\mu(A)-\nu(A)|$ is total variation.\end{enumerate}

\begin{enumerate}[{\bf (H$_b^2$)}]
\item
There are functions
$\bar{b}:\mR^{d}\times\cP(\mR^d)\to\mR^{d}$,
$\omega:\mR_+\to\mR_+$ and $H:\mR^{d}\times\cP(\mR^d)\to\mR_+$ such that for all $(T,t,x,\mu)\in\mR_+^2\times\mR^{d}\times\cP(\mR^d)$
\begin{align}\label{in:00}
\left|\frac{1}{T}\int_{t}^{T+t}(b(s,x,\mu)-\bar{b}(x,\mu))\dif s\right|\le \omega(T)H(x,\mu),
\end{align}
where $\lim\limits_{t\to\infty}\omega(t)=0$ and $\sup\limits_\mu\nor H(\cdot,\mu)\nor_{p_0}<\kappa_0$.
Here $p_0$ and $\kappa_0$ are as in {\bf(H$_b^1$)}.

\end{enumerate}
\begin{enumerate}[{\bf (H$_\sigma$)}]
\item
There are constants $p>d\vee 2$, $\kappa_1>1$ and $\beta\in(0,1)$ such that for all $x,y,\xi\in\mR^d$,
\begin{align*}
\kappa_1^{-1}|\xi|\le |\sigma(x)\xi|\le \kappa_1|\xi|,\quad \nor \nabla\sigma\nor_{p}\le \kappa_1,
\end{align*}
and
$$
\|\sigma(x)-\sigma(y)\|_{HS}\le \kappa_1|x-y|^{\beta},
$$
where $\|\cdot\|_{HS}$ is the Hilbert-Schmidt norm.
\end{enumerate}
%Note that $b$ is the KBM-vector field provided $b$ satisfies {\bf (H$_b^2$)}.
Please see the definition of the localized $L^p$ norm $\nor\cdot\nor_p$ in Section \ref{locBp}.
\br \rm
We Note that
\begin{align*}
\nor \bar{b}(\cdot,\mu)\nor_{p_0}&\le \frac{1}{T}\int_0^T\nor b(s,\cdot,\mu)\nor_{p_0}\dif s+\nor \frac{1}{T}\int_0^T\(b(s,\cdot,\mu)-\bar{b}(\cdot,\mu)\dif s\)\nor_{p_0}\\
&\le \kappa_0+\omega(T)\nor H(\cdot,\mu)\nor_{p_0},
\end{align*}
provided conditions {\bf (H$^1_b$)} and {\bf (H$^2_b$)} hold. Taking $T\to\infty$, we have
\begin{align*}
\nor \bar{b}(\cdot,\mu)\nor_{p_0}\le \kappa_0.
\end{align*}
Similarly, we have
\begin{align*}
\nor \bar{b}(\cdot,\mu)-\bar{b}(\cdot,\nu)\nor_{p_0}\le \kappa_0\|\mu-\nu\|_{var}.
\end{align*}
Thus, the coefficient $\bar{b}$ satisfies the condition {\bf (H$^1_b$)} with the same constant $\kappa_0$.
\er
Under assumptions {\bf (H$_b^1$)} and {\bf (H$_\sigma$)}, for any initial value $\xi\in\sF_0$,
it is well-known that there is a unique strong solution to DDSDE \eqref{MV1} (respectively, \eqref{MV2} );
see \cite{Zhao2020} and \cite{HRZ22}. The aim of this work is to show the following strong
and weak convergence of the averaging principle for DDSDE and SDE with $L^p$ drift.
\bt\label{in:Main}
Under {\bf (H$_b^1$)}, {\bf (H$_b^2$)} and  {\bf(H$_\sigma$)},
 for any $T>0$ and $\ell\in(0,1)$,
there is a constant $C$, depending only on $\kappa_0,\kappa_1,T,d,\beta,p_0,p,\ell$, such that for any $\eps>0$
\begin{align}\label{in:NMain}
\sup_{t\in[0,T]}\|\mu^\eps_t-\mu_t\|_{var}\le
 C \inf_{h>0}\Big(h^{\frac12-\frac{d}{2p_0}}
+\omega\left(\frac{h}{\eps}\right)\Big)
\end{align}
%\begin{align*}
%\sup_{t\in[0,T]}\|\mu^\eps_t-\mu_t\|_{var}
%\le C\inf_{h>0}\left(\left(\omega\left(\frac{h}{\eps}\right)\right)^2+h^{1-\delta}\right)
%\end{align*}
and
\begin{align}\label{in:StMain}
\mE\left(\sup_{t\in[0,T]}|X^\eps_t-X_t|^{2\ell}\right)
\le
 C\inf_{h>0}\left(\left(\omega(h/\eps)\right)^2+h^{1-\frac{d}{p_0}}\right)^\ell.
 \end{align}

\et
\iffalse
\br\rm
By Sobolev embedding \eqref{Eml00}, under the condition {\bf(H$_\sigma$)}, we have
\begin{align*}
\sigma\in \bC^{1-\frac{d}{p_0}}.
\end{align*}
Hence, we can assume a priori that $\beta\ge 1-\frac{d}{p_0}$.

\er
\fi
When the drift $b$ is independent of the distribution, we have the following results, where the convergence rate is independent of $p_0$.
\bt\label{in:Main2}
Assume that
$$
b(t,x,\mu)\equiv b(t,x).
$$
 Under {\bf (H$_b^1$)}, {\bf (H$_b^2$)} and {\bf(H$_\sigma$)}, for any $T>0$, $\delta>0$ and $\ell\in(0,1)$,
there is a constant $C$, depending only on $\kappa_0,\kappa_1,T,d,p_0,p,\delta,\ell$, such that for any $\eps>0$
$$
\mE\left(\sup_{t\in[0,T]}|X^\eps_t-X_t|^{2\ell}\right)
\le C\inf_{h>0}\left(\left(\omega\left(\frac{h}{\eps}\right)\right)^2+h^{1-\delta}\right)^\ell.$$
\et
\br\rm
\begin{itemize}
\item[(i)] Since we use the Zvonkin transformation using the parabolic equation,
when $\bar{b}=\bar{b}(t)=\bar{b}(\cdot,\mu_t)$ depends on the time variable $t$,
the time regularity for solutions to this parabolic equation affects
the convergence rate (see \eqref{GG051} and Lemma \ref{LemEE01} for more details).
When $\bar{b}$ is independent of time, we can construct the Zvonkin transformation
using the elliptic equation. Hence, the convergence rates in Theorems \ref{in:Main}
and \ref{in:Main2} are different.

\item[(ii)] Noting that $\nor f\nor_{p_0}\lesssim\| f\|_\infty$ for all $p_0\in(1,\infty)$, all results in our paper are valid for $p_0=\infty$, in which case the rate of convergence in \eqref{in:StMain} is
$$
\inf_{h>0}\left(\left(\omega\left(\frac{h}{\eps}\right)\right)^2+h^{1-\delta}\right)^\ell
$$
for any $\delta>0$. In particular, we obtain the convergence rate $\eps^{\frac13-\delta}$ for a large number of examples (see e.g. Example \ref{Ex2} below), which is faster than $\eps^{\frac16}$ in \cite{HLL2022}.

\end{itemize}
\er

\br\rm
These averaging principle results are also applicable to the following system
\begin{equation}\label{inseq}
\dif X_t=\eps b\left(t,X_t,\cL(X_t)\right)\dif t+\sqrt{\eps}\sigma(X_t)\dif W_t.
\end{equation}
Define ${Z^\eps_t}:=X_{t/\eps}$ and $W^\eps_t:=\sqrt{\eps}W_{t/\eps}$ for all $t\in\R_+$. We rewrite \eqref{inseq} as
\begin{equation*}
\dif {Z^\eps_t}=b\left(t/\eps,{Z^\eps_t},\cL({Z^\eps_t})\right)\dif t+\sigma({Z^\eps_t})\dif W^\eps_{t}.
\end{equation*}
Then we can consider the following system
\begin{equation*}
\dif \tilde{X}^\eps_t=b\left(t/\eps,\tilde{X}^\eps_t,\cL(\tilde{X}^\eps_t)\right)\dif t+\sigma(\tilde{X}^\eps_t)\dif W_{t}.
\end{equation*}
\er

Note that since the drift of both the DDSDE and SDE in this paper is locally $L^{p_0}_x$ integrable,
we cannot use Gronwall's lemma or the generalized Gronwall lemma directly to
prove the convergence of $X^\eps$ to $X$ as in \cite{HLL2022, SXW2022}. On the other hand, {our system \eqref{MV1} can be rewritten in the following slow-fast system:
\begin{align*}
\left\{
\begin{aligned}
&\dif X_t^\eps=b\left(Y_t^\eps,X_t^\eps,\mu_t^\eps\right)\dif t+\sigma(X_t^\eps)\dif W_t,\\
&\dif Y_t^\eps=\frac1\eps\dif t.
\end{aligned}
\right.
\end{align*}
Since the Kolmogorov operator of the fast process $Y_t^\eps=\frac{t}{\eps},~t\geq0$, does not have a second order elliptic part,}
we cannot use the technique based on the Poisson equation as in \cite{RSX2021}.
To overcome these difficulties, we use Zvonkin's transformation to remove the drift $b$
and employ the classical technique of time discretization.

More precisely, consider the following backward PDE for $t\in[0,T]$ related to \eqref{MV2}
\begin{align*}
\p_tu+a_{ij}\p_i\p_ju-\lambda u+B\cdot\nabla u+B=0,\quad u(T)=0,
\end{align*}
where $B(t,x):=\bar{b}(x,\mu_t)$, $\lambda\ge0$, is the dissipative term.
Under {\bf (H$_\sigma$)} and {\bf (H$^1_b$)}, Xia et al \cite{Xi-Xi-Zh-Zh}
proved that for a sufficiently large number $\lambda$, there is a solution $u$ such that
\begin{align*}
|\nabla u(t,x)|\le\frac{1}{2},\quad t\in[0,T], x\in\mR^d.
\end{align*}
Hence, if we define $\Phi_t(x):=x+u(t,x)$, then $x\to\Phi_t(x)$ is a $C^1$ diffeomorphism of $\mR^d$.
{By It\^o's formula}, {$Y^\eps_t:=\Phi_t(X^\eps_t)$ and $Y_t:=\Phi_t(X_t)$ solve the following new SDEs:}
\begin{align*}
\dif Y^{\eps}_t=
&
\lambda u(t,\Phi^{-1}_t(Y^\eps_t))\dif t+(\sigma^*\nabla\Phi_t(\Phi^{-1}_t(Y^\eps_t)))\dif W_t\\
&
+\(b(t/\eps,X^\eps_t,\mu^\eps_t)-\bar {b}(X^\eps_t,\mu_t)\)\cdot\nabla \Phi_t\(X^{\eps}_t\)\dif t
\end{align*}
and
\begin{align*}
\dif Y_t=\lambda u(t,\Phi^{-1}_t(Y_t))\dif t+(\sigma^*\nabla\Phi_t)(\Phi^{-1}_t(Y_t))\dif W_t,
\end{align*}
where $\sigma^*$ is the transpose of $\sigma$ and $\Phi^{-1}_t$ is the inverse of $x\to\Phi_t(x)$.
Since these new systems have differentiable diffusion coefficients and the drifts are Lipshitz continuous,
we can use the stochastic Gronwall inequality. Please see the complete formulation in Section \ref{Section5}.

The remaining part of the proof is about how to use the technique of time discretization to estimate the following crucial term
\begin{align}\label{in:NCD00}
\mE\Big[\sup_{t\in[0,T]}\Big|\int_0^t\(b(\frac{s}{\eps},X^\eps_s,\mu^\eps_s)-\bar{b}(X^\eps_s,\mu_s)\)\cdot\nabla \Phi_s\(X^{\eps}_s\)\dif s\Big|^2\Big].
\end{align}
In particular, we need to estimate
\begin{align}\label{in:NCC01}
\|\mu_t-\mu^\eps_t\|_{var}
\end{align}
and the following difference for $B(t)=b(s/\eps)\cdot\nabla \Phi_t$ and $B(t)=\bar{b}\cdot \nabla \Phi_t$:
\begin{align}\label{in:NCC00}
\mE\Big|\int_0^t\(B(s,X^\eps_{s},\mu^\eps_{s})-B(s,X^\eps_{\pi_h(s)},\mu^\eps_{\pi_h(s)})\)\dif s\Big|^2,
\end{align}
where $\pi_h(s)$ is defined by $\pi_h(s)=s$ for $s\in[0,h)$ and
\begin{align*}
\pi_h(s):=kh,\quad s\in[kh,(k+1)h),~~\forall k\in\mN.
\end{align*}
%Moreover, we need to estimate
%\begin{align}\label{in:NCC01}
%\|\mu_t-\mu^\eps_t\|_{var}.
%\end{align}
Since $b$ is not continuous, it is not an easy problem to give an estimate for \eqref{in:NCC00}.
Thanks to the important observation in \cite[Lemma 2.1]{DG20},
we deal with this problem by considering the time regularity for the semi-group
generated by the solutions to \eqref{MV1}. Basically, if $X^\eps_t=W_t$ is a standard Brownian motion and $f$ is a discontinuous function, we have
\begin{align*}
&\quad\mE\Big|\int_0^t\(f(W_{s})-f(W_{\pi_h(s)})\)\dif s\Big|^2\\
&=2\mE\int_0^t\int_s^t\(f(W_{s})-f(W_{\pi_h(s)})\)\(f(W_{r})-f(W_{\pi_h(r)})\)\dif r\dif s\\
&=2\mE\int_0^t\(f(W_{s})-f(W_{\pi_h(s)})\)\Big(\mE^{\sF_s}\int_s^t\(f(W_{r})-f(W_{\pi_h(r)})\)\dif r\Big)\dif s,
\end{align*}
where $\sF_s:=\sigma(W_r,r\le s)$ and $\mE^{\sF_s}$ is the conditional expectation with respect to $\sF_s$.
We note that by the Markov property,
\begin{align*}
&\quad\mE^{\sF_s}\int_s^t\(f(W_{r})-f(W_{\pi_h(r)})\)\dif r\\
&=\mE^{\sF_s}\int_s^{s+h}\(f(W_{r})-f(W_{\pi_h(r)})\)\dif r+\int_{s+h}^t\(P_{r-s}f(W_{s})-P_{\pi_h(r)-s}f(W_{s})\)\dif r,
\end{align*}
where $P_tf(x):=\mE f(x+W_t)$ is the semi-group of Brownian motion.
{Note that as $h\to0$ the first term converges to zero.
And the second term also converges to zero as $h\to0$ since $s\to P_sf$ is continuous.}
Please see Section \ref{Section3} below for more details.
We mention that similar estimates are obtained in \cite{LL21} by the stochastic sewing techniques.

To estimate \eqref{in:NCC01}, we employ a method based on the Kolmogorov equation which is also used in \cite{RSX19}.
Then, again by time discretization, we estimate the difference \eqref{in:NCC01} and obtain \eqref{in:NMain} (see Section \ref{Section4}).
%%%%%%%%%%%%%%%%%%%%%%%%%%%%%%%%%%%%%%%

\iffalse
More specifically,
for Theorem \ref{in:Main2}, the coefficient is independent of distribution,
the critical point in the analysis for convergence of $X^\eps$ to $X$ is Lemma \ref{LemEE01}, which is based on Krylov's estimates and Girsanov's theorem. With the help of Lemma \ref{LemEE01} and Zvonkin's transformation, we obtain averaging principle for SDE with singular drift. When we consider DDSDE (Theorem \ref{in:Main}), another main obstacle is establishing the weak convergence. In particular, we show that $\mu^\eps$ converges, as $\eps$ goes to zero, to $\mu$ based on the Cauchy problem to Kolmogorov equation generated by $X_t,t\geq0$; see Theorem \ref{S4:Main}.
\fi

Now we illustrate our results by some examples. Firstly, the following example shows that the function $\omega(t)$
can be of the form $t^{-\alpha}$ with any power $\alpha\in(0,1]$,
\iffalse
\begin{Examples}\label{Ex0}\rm
Let
\begin{align}\label{Ex0eq}
b(t,x,\mu):=\left(\sum_{k=1}^\infty\left( \1_{[k,k+1]}(t)a_k\sin (2k\pi t)\right)\right)g(x,\mu),
\end{align}
where $g$ satisfies $\bf (H_1)$ and $|a_k|\leq f(k-1)$. Here nonnegative decreasing function $f$ satisfies $\lim\limits_{T\rightarrow\infty}\frac{f(T)}{T}=0$. Then it can be verified that
$$
\bar{b}(x,\mu)=g(x,\mu), \quad
\omega(t)=\frac{f(t)}{t},\quad \text{and}\quad H(x,\mu)=g(x,\mu).
$$
Note that $f$ can be $t^{-\alpha},\alpha>0$ and $e^{-t}$.
\end{Examples}
\fi
which can also be used in some systems with singular interactions.
\begin{Examples}\label{Ex1}\rm
Consider the following DDSDE in $\mR^d$
\begin{align*}%\label{Ex1eq}
\dif X_t^\eps
&
=\left(
\Big[(1+t/\eps)^{-\alpha_1}+1\Big]\int_{\R^d} \frac{X_t^\eps-y} {|X_{t}^{\eps}-y|^{\alpha_2}}\mu_t^\eps(\dif y)\right)\dif t+\dif W_t\\\nonumber
&
=:b(t/\eps,X_{t}^{\eps},\mu_{t}^{\eps})\dif t+\dif W_t,
\end{align*}
where $\alpha_1>0$, $1<\alpha_2<2\wedge(1+\frac{d}{2})$ and $\mu_t^\eps$ is the distribution of $X_t^\eps$. It is clear that
the averaged equation is
\begin{align*}
\dif X_t
&
=\left(\int_{\R^d}\frac{X_t-y} {|X_t-y|^{\alpha_2}}\mu_t(\dif y)\right)\dif t+\dif W_t\\
&
=:\bar{b}(X_t,\mu_t)\dif t+\dif W_t,
\end{align*}
where $\mu_t$ is the distribution of $X_t$,
and
$$
\left|\frac{1}{T}\int_{t}^{t+T}\left(b(s,x,\mu)
-\bar{b}(x,\mu)\right)\dif s\right|
\leq\omega\left(T\right)(1-\alpha_1)^{-1}
\int_{\R^d} \frac{|x-y|} {|x-y|^{\alpha_2}} \mu(\dif y)
$$
for all $(T,t,x,\mu)\in\R^2\times\R\times\cP(\R^d)$, where
\begin{eqnarray*}
\omega(t)=
\begin{cases}
t^{-(\alpha_1\wedge1)} & \text{for}~ \alpha_1\in(0,1)\cup(1,\infty)\\
t^{-1}\ln t           & \text{for}~ \alpha_1=1.
\end{cases}
\end{eqnarray*}
Then we have for any $\delta>0$
$$
\sup_{t\in[0,T]}\|\mu_t^\eps-\mu_t\|_{var}\leq
C\eps^{\frac{\alpha(2-\alpha_2)}{2+2\alpha-\alpha_2}-\delta}
$$
and
$$
\left[\mE\left(\sup_{t\in[0,T]}|X^\eps_t-X_t|^{2\ell}\right)
\right]^{\frac{1}{\ell}}
\le C\eps^{\frac{4\alpha-2\alpha\alpha_2}
{2+2\alpha-\alpha_2}-\delta}
$$
for any $0<\ell<1$, where $\alpha=\alpha_1\wedge1$ for $\alpha_1\in(0,\infty)$.
\end{Examples}

Next we give a more general example, where the function $\omega(t)\asymp t^{-1}$, i.e.
there exists a constant $C$ such that $C^{-1}t^{-1}\leq\omega(t)\leq Ct^{-1}$.\begin{Examples}\label{Ex2}\rm
Let $p_0\in(d\vee2,\infty)$.
Consider the following DDSDE
\begin{align}\label{Ex2eq}
\dif X_t^\eps
=\left[\int_{\R} F\left(\sin(\xi t/\eps),\int_{\R^d}\phi(X_t^\eps,y)\mu_t^\eps(\dif y)\right)\nu(\dif \xi)\right]
\dif t+\dif W_t,
\end{align}
where $\mu_t^\eps$ is the time marginal law of $X_t$, $F:[-1,1]\times\mR^m\to\mR^d$ is measurable and satisfies for some constant $L_F>0$
\begin{align}\label{in:EX2}
|F(t,0)|\leq L_F, ~~|F(t,x)-F(t,y)|\leq L_F|x-y|\quad \text{for all $(t,x,y)\in [-1,1]\times\mR^{2m}$},
\end{align}
${\nu}$ is some finite measure on $\R$ satisfying
$$
\int_{\mR\backslash\{0\}}\frac{1}{\xi}\nu(\dif\xi)<\infty
$$
and $\phi :\mR^d\to\mR^m$ is measurable and satisfies
$$
\sup\limits_y\nor\phi(\cdot,y)\nor_{p_0}<\infty.
$$
{Set
\begin{align*}
b(t,x,\mu):=\int_{\R} F\left(\sin(\xi t),\int_{\R^d}\phi(x,y)\mu(\dif y)\right)\nu(\dif \xi)
\end{align*}
and
\begin{align*}
\bar{b}(x,\mu):=&\frac{1}{2\pi}\int_0^{2\pi} F\left(\sin\tau,\int_{\R^d}\phi(x,y)\mu(\dif y)\right)\nu(\R\setminus\{0\})\dif\tau\\
&+F\left(0,\int_{\R^d}\phi(x,y)\mu(\dif y)\right)\nu(\{0\}).
\end{align*}
We claim that
\begin{align}\label{NNA00}
\begin{split}
\text{$(b,\bar b)$ satisfies conditions {\bf (H$_b^1$)} and {\bf (H$_b^2$)} with $\omega(T):=\frac{4\pi L_F}{T}\int_{\mR\setminus\{0\}}\frac{\nu(\dif \xi)}{|\xi|}$,}
\end{split}
\end{align}
which is proved in the Appendix.

}
Hence, based on Theorem \ref{in:Main} and Lemma \ref{lem61}, we have
$$
\sup_{t\in[0,T]}\|\mu_t^\eps-\mu_t\|_{var}\leq
C\eps^{\frac{1}{3}-\frac{2d}{9p_0-3d}}
$$
and
\begin{equation}\label{Ex2con}
\left[\mE\left(\sup_{t\in[0,T]}|X^\eps_t-X_t|^{2\ell}\right)
\right]^{\frac{1}{\ell}}
\le C\eps^{\frac{2}{3}\left(1-\frac{2d}{3p_0-d}\right)}
\end{equation}
for any $0<\ell<1$ and $T>0$, where $X_t$ is the solution of
\begin{align*}
\dif X_t=\bar{b}(X_t,\mu_t)\dif t+\dif W_t.
\end{align*}
Here $\mu_t$ is the distribution of $X_t$ for all $t\geq0$.
%For example, we can consider
%$$\int_{\R_\xi} F\left(\sin(\xi t),\int_{\R^d}\phi(x,y)
%\mu(\dif y)\right)\nu(\dif\xi)=\int_{\R_\xi}|\sin\xi t+\int_{\R^d}\phi(x,y)\mu(\dif y)|\nu(\dif \xi).$$
%Note that
%$\int_\R\sin\left(\xi t\right)\hat{\nu}(\dif \xi)
%=\sin t+\sin (\sqrt{2}t)$ is a quasi-periodic function
%if $\hat{\nu}=\delta_1+\delta_{\sqrt{2}}$.
\end{Examples}

%Note that the rate of strong convergence in \eqref{Ex2con} is better than \cite{RSX2021} provided
%$p_0$ is large enough.

\br\rm
In this paper, we only consider the case where $\sigma$ does not depend on time $t$ and the measure $\mu$.
We hope that in future work
we can use our framework here to study the time-inhomogeneous case $\sigma=\sigma(t,x)$ or the even more general case $\sigma=\sigma(t,x,\mu)$ under the following standard condition on $\sigma$
\begin{enumerate}[{\bf (H$_\sigma^{'}$)}]
\item
There are functions $\omega_\sigma$ and $H_\sigma$ such that
\begin{align*}%\label{in:001}
\frac{1}{T}\int_{t}^{T+t}\|\sigma(t,x,\mu)-\bar{\sigma}(x,\mu)\|_{HS}^2\dif t\le \omega_\sigma(T)H_\sigma(x,\mu)
\end{align*}
for all $(T,t,x)\in(\R_+^2\times\R^d)$,
where $\lim\limits_{t\to\infty}\omega_\sigma(t)=0$ and $\sup\limits_\mu\nor H_\sigma(\cdot,\mu)\nor_{\frac{p_0}{2}}<\infty$.
\end{enumerate}

\er

{\bf Structure of the paper.}

In Section \ref{Section2}, we first introduce the localized Bessel potential space,
{the embedding lemma and the local Hardy-Littlewood maximal function.}
Then we show some well-posedness results
and priori estimates about parabolic and elliptic PDEs for later establishing
the time regularity of solutions in Section \ref{Section3} and
performing Zvonkin's transformation in Section \ref{Section5}.

In Section \ref{Section3}, we study the time regularity of solutions to
both SDE and PDE in the $L^p$ framework.
As usual, we use the technique of time discretization
to obtain the convergence for the averaging principle.
That is we need to show a time discretization's type estimate
of \eqref{in:NCC00},
and then estimate \eqref{in:NCD00} by condition {\bf (H$^2_b$)} (see Section \ref{Sec3.1}).
For \eqref{in:NCC00}, we first asume that $Z_t^\eps$ is a solution to
the SDE \eqref{MV1} without drift, that is to say $b\equiv0$.
By the semi-group property from condition {\bf (H$_\sigma$)}
and heat kernel estimates we estimate \eqref{in:NCC00}
for any localized $L^p$ integrable $B$ with $p>(d\vee2)$. Then it follows
from the Girsanov transform that \eqref{in:NCC00} holds for $X_t^\eps$,
where $X_t^\eps$ is the solution to \eqref{MV1} (see Section \ref{sec:3.1}).
% then use the Girsanov transform.
%We mention that \eqref{BB03} has been investigated
%in the area of Euler-Maruyama schema (see \cite{DG20} and \cite{LL21}).
%Here one can use the semi-group property from condition {\bf (H$_\sigma$)}
%and heat kernel estimates to estimate \eqref{in:NCC00} when $X_t^\eps$ is a solution to the SDE without drift (see Section \ref{sec:3.1}). %\eqref{BB03}.
%Moreover, the difference $\|\mu_s-\mu_{\pi_h(s)}\|_{\var}$ comes from Duhamel's formula
%and time regularity of solutions to SDEs in Section \ref{Sec3.1}.
We also prove the time regularity of the gradient of solutions for parabolic PDEs in Section \ref{SubS32},
which is used in Section \ref{Section4} and Section \ref{Section5} respectively.

{In Section \ref{Section4}, {after realizing that \eqref{NN03} holds},
we prove the weak convergence rate of \eqref{in:NCC01},
where the key point is the estimates for time regularity obtained in Section \ref{SubS32}.}

In Section \ref{Section5},
we give the proofs to our main results Theorems \ref{in:Main}
and \ref{in:Main2} by Zvonkin's
transformation.
\vspace{5mm}

{\bf Notations.}

Throughout this paper, $|\cdot|$ denotes the Euclidean norm on $\mR^d,d\in\N$.
For $j\in\mN\cup\{0\}$, we use the notation $\nabla^j$ to denote
the $j$th order derivative. Moreover, let $C^\infty_0$ denote
the function space of all smooth functions with compact support.
We write $C^k_b$ (respectively, $C^\infty_b$) to mean the space of all smooth functions
with bounded $j$th derivatives for all integer $j\in[0,k]$ (respectively, $j\in\mN\cup\{0\}$).
Let $[\alpha]$ be the largest integer which is smaller than $\alpha$
for any constant $\alpha>0$.
We use $C$ with or without subscripts to denote an unimportant constant,
whose value may change from line to line.
Writing $``:="$ we mean equal by definition.
By $A\lesssim B$
%and $A\asymp B$,
we mean that for some unimportant constant $C\ge1$,
\begin{align*}
A\le CB.
%\quad \text{and}\quad C^{-1}B\le A\le CB.
\end{align*}

%$\tilde{\mL}^p(T):=\mL^\infty\left([0,T];\widetilde{L}^p(\mR^d)\right).$

\section{Preliminaries}\label{Section2}

In this section we show some auxiliary results for later use.
\subsection{Localized Bessel potential space}\label{locBp}
For any $(\alpha,p)\in\R\times(1,\infty)$, we write
$$
H^{\alpha,p}:=(\mathbb I-\Delta)^{-\alpha/2}
\left(L^p(\R^d)\right)
$$
for the usual Bessel potential space with the norm given by
$
\|f\|_{\alpha,p}:=\|(\mathbb I-\Delta)^{\alpha/2}f\|_p,
$
where $\|\cdot\|_p$ is the usual $L^p$-norm.
Here $(\mathbb I-\Delta)^{\alpha/2}f$ is defined through
Fourier's transform
$$
(\mathbb I-\Delta)^{\alpha/2}f:=\cF^{-1}\left((1+|\cdot|^2)^{\alpha/2}
\cF f\right).
$$
We note that if $\alpha=n\in\N$ and $p\in(1,\infty)$, an equivalent norm in $H^{n,p}$
is given by 
$$
\|f\|_{n,p}=\|f\|_p+\|\nabla^nf\|_p.
$$

Let $\chi\in C_0^\infty(\R^d)$
such that $\chi(x)=1$ for $|x|\leq1$,  $\chi(x)=0$
for $|x|>2$ and $|\chi(x)|\leq1, \forall x\in\R^d$. Define
$$
\chi_r(x):=\chi(x/r), \quad \chi_r^z(x):=\chi_r(x-z)
$$
for all $r>0$ and $z\in\R^d$. Given $r>0$, we introduce the
following localized $H^{\alpha,p}$-space:
$$
\widetilde{H}^{\alpha,p}:=\left\{f\in H^{\alpha,p}_{\rm{loc}}(\R^d):\nor f\nor_{\alpha,p}:=\sup_z\|\chi_r^zf\|_{\alpha,p}<\infty\right\}.
$$
Clearly, this space does not depend on $r$ and the corresponding norms are equivalent.
When $\alpha=0$, we simply write
$$
\widetilde{L}^p:=\widetilde{H}^{0,p}\quad {\text{and}}\quad
\nor f\nor_p:=\nor f\nor_{0,p}.
$$
{It follows from H\"older's inequality that for any $1\le p_2\le p_1\le\infty$
\begin{align*}%\label{Pre1:emb}
L^{p_1}\subset\widetilde{L}^{p_1}\subset\widetilde{L}^{p_2}.
%\ \ \widetilde{\mL}^p(T)\subset\widetilde{\mL}^p_q(T).
\end{align*}
}

For $T>0$, $p,q\in(1,\infty)$ and $\alpha\in\R$, we set
\begin{align*}
  \mL_q^p(T):=L^q\([0,T];L^p\),\quad
  \mH_q^{\alpha,p}(T):=L^q\([0,T];H^{\alpha,p}\).
\end{align*}
Now we introduce the localized space
\begin{align*}
\widetilde{\mH}_q^{\alpha,p}(T):=\left\{f\in\mH_q^{\alpha,p}(T):
\nor f\nor_{\widetilde{\mH}_q^{\alpha,p}(T)}:=
\sup_{z\in\R^d}\|\chi_r^zf\|_{\mH_q^{\alpha,p}(T)}<\infty\right\}.
\end{align*}
By a finite covering technique, it can be verified  that also the definition of $\widetilde{\mH}_q^{\alpha,p}$
does not depend on the choice of $r$ (see \cite[Section 2]{Xi-Xi-Zh-Zh}).
We note that all these spaces are Banach spaces and that
\begin{align*}
  L^q\([0,T];\widetilde{H}^{\alpha,p}\)\subset
  \widetilde{\mH}_q^{\alpha,p}(T).
\end{align*}
{For $\alpha=0$, set
\begin{align*}
\widetilde{\mL}^p_q(T):=\widetilde{\mH}^{0,p}_q(T).
\end{align*}
}
If $q=\infty$, for simplicity, we define
\begin{align*}
\widetilde{\mH}^{\alpha,p}(T):=
L^\infty([0,T];\widetilde{H}^{\alpha,p}),
\quad
\widetilde{\mL}^p(T):=\widetilde{\mL}^p_\infty(T),
\quad\text{and}\quad \mL^\infty_T:=L^\infty([0,T]\times\mR^d).
\end{align*}

Let $C^\alpha$ denote the H\"older space of order $\alpha$, which consists of all functions $g$ for which the norm
\begin{align*}
\|g\|_{C^\alpha}:=\sum_{|\beta|\leq[\alpha]}
\|\nabla^\beta g\|_{L^\infty}
+\sum_{|\beta|=[\alpha]}
\left[\nabla^\beta g\right]_{C^{\alpha-[\alpha]}}
\end{align*}
is finite, where
\begin{align*}
  [g]_{C^{\alpha-[\alpha]}}:=\sup_{x,y\in\R^d \atop x\neq y}
  \frac{|g(x)-g(y)|}{|x-y|^{\alpha-[\alpha]}}.
\end{align*}

\begin{lemma}[Embedding lemma]\label{Eml}
Let $1<p<\infty$.
Then we have
\begin{align*}%\label{Eml00}
\widetilde{H}^{\alpha,p}\subset C^{\alpha-d/p}
\end{align*}
and
\begin{align*}
  \widetilde{\mH}^{\alpha,p}(T)\subset L^\infty\left([0,T];C^{\alpha-d/p}\right)
\end{align*}
provided $\alpha>d/p$.
\end{lemma}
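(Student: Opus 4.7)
The plan is to reduce both claims to the classical Sobolev embedding $H^{\alpha,p}(\mathbb{R}^d)\hookrightarrow C^{\alpha-d/p}(\mathbb{R}^d)$ for $\alpha>d/p$, and then handle the localization via the cutoff functions $\chi_r^z$. Fix any $r>0$ (the space $\widetilde{H}^{\alpha,p}$ and its norm are independent of $r$ up to equivalence, so this is harmless), and denote by $C_0$ a generic constant coming from the classical embedding applied to $\chi_r^z f$.

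First I would establish the $L^\infty$ bound. For any $x\in\mathbb{R}^d$, setting $z=x$ gives $\chi_r^z(x)=\chi_r(0)=1$, so $f(x)=(\chi_r^zf)(x)$, and the classical embedding yields
\[
|f(x)|\le\|\chi_r^zf\|_\infty\le C_0\|\chi_r^zf\|_{\alpha,p}\le C_0\nor f\nor_{\alpha,p}.
\]
Taking the supremum over $x$ gives $\|f\|_\infty\lesssim\nor f\nor_{\alpha,p}$. Next, for the Hölder seminorm in the case $0<\alpha-d/p\le 1$, pick $x,y\in\mathbb{R}^d$. If $|x-y|\le r$, take $z=x$: then $\chi_r^z\equiv 1$ on $B(z,r)$, so $\chi_r^z(x)=\chi_r^z(y)=1$, and hence
\[
\frac{|f(x)-f(y)|}{|x-y|^{\alpha-d/p}}=\frac{|(\chi_r^zf)(x)-(\chi_r^zf)(y)|}{|x-y|^{\alpha-d/p}}\le\|\chi_r^zf\|_{C^{\alpha-d/p}}\le C_0\nor f\nor_{\alpha,p}.
\]
If $|x-y|>r$, the estimate follows trivially from the $L^\infty$ bound, namely $|f(x)-f(y)|/|x-y|^{\alpha-d/p}\le 2r^{-(\alpha-d/p)}\|f\|_\infty\lesssim\nor f\nor_{\alpha,p}$.

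For the higher range $\alpha-d/p>1$, I would iterate the same idea on the derivatives $\nabla^\beta f$ with $|\beta|\le[\alpha-d/p]$. The key observation is that on $B(z,r)$ we have $\chi_r^z\equiv 1$, so all derivatives of $\chi_r^z$ vanish there and by Leibniz's rule $\nabla^\beta(\chi_r^zf)=\nabla^\beta f$ pointwise on $B(z,r)$. Thus the same cutoff argument above, applied to $\nabla^\beta f$ and using the classical embedding $H^{\alpha-|\beta|,p}\hookrightarrow C^{\alpha-|\beta|-d/p}$, gives pointwise and Hölder control of every derivative of order at most $[\alpha-d/p]$ in terms of $\|\chi_r^zf\|_{\alpha,p}$, which in turn is bounded by $\nor f\nor_{\alpha,p}$.

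Finally, the time-dependent assertion is immediate once the spatial embedding is established: by definition $\widetilde{\mH}^{\alpha,p}(T)=L^\infty([0,T];\widetilde{H}^{\alpha,p})$, so for almost every $t\in[0,T]$,
\[
\|f(t,\cdot)\|_{C^{\alpha-d/p}}\lesssim\nor f(t,\cdot)\nor_{\alpha,p}\le\nor f\nor_{\widetilde{\mH}^{\alpha,p}(T)},
\]
and taking the essential supremum over $t$ gives the desired embedding. The only mildly tedious point is the bookkeeping via Leibniz in the case $\alpha-d/p>1$; everything else reduces to the classical Sobolev embedding plus the cutoff trick that $\chi_r^z\equiv 1$ near $z$.
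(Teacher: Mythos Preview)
Your proof is correct and follows essentially the same approach as the paper: both arguments reduce to the classical Sobolev embedding $H^{\alpha,p}\hookrightarrow C^{\alpha-d/p}$ applied to $\chi_r^z f$, combined with the observation that the $C^{\alpha-d/p}$-norm of $f$ is controlled by $\sup_z\|\chi_r^z f\|_{C^{\alpha-d/p}}$. The paper simply asserts this last inequality, whereas you spell out its proof via the cutoff trick (taking $z=x$ and distinguishing $|x-y|\le r$ from $|x-y|>r$), so your version is in fact a more detailed rendering of the same idea.
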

\begin{proof}
It follows from Sobolev's embedding theorem that $H^{\alpha,p}\subset C^{\alpha-d/p}$ if $\alpha>d/p$.
Note that
\begin{align*}
\|g\|_{C^{\alpha-d/p}}\leq\sup_{z}\|\chi_r^zg\|_{C^{\alpha-d/p}}
\end{align*}
for all $g\in C^{\alpha-d/p}$ and $r>0$. Therefore, we have
\begin{align*}
\|g\|_{C^{\alpha-d/p}}
\leq\sup_{z}\|\chi_r^zg\|_{C^{\alpha-d/p}}
\lesssim \sup_{z}\|\chi_r^zg\|_{H^{\alpha,p}}
=\nor g\nor_{\widetilde{H}^{\alpha,p}}.
\end{align*}
Moreover, for all $f\in\widetilde{\mH}^{\alpha,p}$ one sees that
\begin{align*}
\sup_{t\in[0,T]}\|f(t)\|_{C^{\alpha-d/p}}
\leq\sup_{t\in[0,T]}\sup_{z}\|\chi_r^zf(t)\|_{C^{\alpha-d/p}}
\lesssim \sup_{t\in[0,T]}\sup_{z}\|\chi_r^zf(t)\|_{H^{\alpha,p}}
=\nor f\nor_{\widetilde{\mH}^{\alpha,p}}.
\end{align*}
\end{proof}

Now we introduce the local Hardy-Littlewood maximal function, which is defined by
\begin{align*}
\cM f(x):=\sup_{r\in(0,1)}\frac{1}{|B_r|}\int_{B_r}f(x+y)\dif y.
\end{align*}

The following result is taken from Lemma 2.1 in \cite{Xi-Xi-Zh-Zh} (see also \cite[Appendix A]{CD08}).

\bl
\begin{enumerate}[(i)]
\item There is a constant $C=C(d)>0$, such that for any $f\in L^\infty(\mR^d)$ with $\nabla f\in L^1_{\rm{loc}}(\mR^d)$
and for Lebesgue-almost all $x,y\in\mR^d$,
\begin{align}\label{Pre1:Mnf}
|f(x)-f(y)|\le C|x-y|\(\cM|\nabla f|(x)+\cM|\nabla f|(y)+\|f\|_\infty\).
\end{align}
\item For any $p\in(1,\infty)$, there is a constant $C=C(d,p)>0$ such that for all $f\in \widetilde{L}^p$,
\begin{align}\label{Pre1:Mix}
\nor \cM f\nor_{p}\le C\nor f\nor_{p}.
\end{align}
\end{enumerate}
\el

\subsection{Parabolic equation}
In order to study DDSDE, we consider the following second order parabolic PDE in $\mR_+\times\mR^d$:
\begin{align}\label{Pre4:PDE}
\p_tu=a_{ij}\p_i\p_ju-\lambda u+b\cdot\nabla u+f,\ \ u(0)=\varphi,
\end{align}
where $\lambda\ge0$, $a=(a_{ij}):\R^d\rightarrow\R^d\otimes\R^d$
is a symmetric matrix-valued Borel measurable function satisfying {\bf{\bf(H$_a$)}}, i.e.,
\begin{enumerate}
  \item[{\bf(H$_a$)}] there exist constants $c_0>0$ and $\theta\in(0,1)$ such that
  \begin{equation*}
    c_0^{-1}|\xi|\leq|a(x)\xi|\leq c_0|\xi|,\quad
    \|a(x)-a(y)\|_{HS}\leq c_0|x-y|^\theta
  \end{equation*}
  for all $\xi\in\R^d$ and $x,y\in\R^d$,
\end{enumerate}
and $b:\R\times\R^d\rightarrow\R^d$ is a vector-valued Borel measurable function.
Firstly, we introduce the definition of a solution to PDE \eqref{Pre4:PDE}.
\bd\label{Pre4:Def}\rm
Let $T>0$, $p,q\in(1,\infty)$, $\lambda\ge0$,
$b,f\in \widetilde{\mL}^p_q(T)$ and $\varphi\in C_b^\infty$.
We call a function $u$ with
$\p_tu\in\widetilde{\mL}_q^p(T)$ and
$u\in \widetilde{\mH}^{2,p}_q(T)$ a {\em solution} of PDE \eqref{Pre4:PDE} if for Lebesgue almost all $(t,x)\in\mR_+\times\mR^d$,
\begin{align*}
u(t,x)=\int_0^t\(a_{ij}\p_i\p_ju(s,x)-\lambda u(s,x)+b\cdot\nabla u(s,x)+f(s,x)\)\dif s+\varphi(x).
\end{align*}
\ed

\begin{remark}\rm
For any $\chi\in C_0^\infty(\R^d)$ and $f\in\widetilde{\mathbb H}_q^{\alpha,p}(T)$,
by the definition of the localized spaces $\widetilde{\mathbb H}_q^{\alpha,p}(T)$,
we have $\chi f\in{\mathbb H}_q^{\alpha,p}(T)$. Hence for any solution $u$ of PDE
\eqref{Pre4:PDE} in the sense of Definition \ref{Pre4:Def}, $\chi u$ is H\"older continuous
on $[0,T]\times\R^d$ if $d/p+2/q<2$ according to \cite[Lemma 10.2]{KR05}. Moreover,
$\nabla (\chi u)$ is H\"older continuous on $[0,T]\times\R^d$
if $d/p+2/q<1$. In view of the arbitrariness
of the cut-off function $\chi$, $u$ (respectively, $\nabla u$) are locally H\"older continuous
on $[0,T]\times\R^d$ if $d/p+2/q<2$ (respectively, $d/p+2/q<1$).
\end{remark}

The following property of the solution $u$ comes from \cite[Theorem 3.2]{Xi-Xi-Zh-Zh}.
\bl\label{lem210}
Let $T>0$, $p,q\in(1,\infty)$ with $2/q+d/p<1$,
$\lambda\ge0$, $b\in \widetilde{\mL}^{p}_{q}(T)$. Set
$$
\Theta:=(d,T,p,q,\|b\|_{\widetilde{\mL}^{p}_{q}(T)},c_0,\theta).
$$ Then there is a constant $\lambda_0=\lambda_0(\Theta)$ such that for all $\lambda\ge\lambda_0$ and $f\in \widetilde{\mL}^{p}_{q}(T)$ and $\varphi\in \widetilde{H}^{2,p}$,
there is a unique solution $u$ to PDE \eqref{Pre4:PDE} on $[0,T]$ in the sense of Definition \ref{Pre4:Def}
such that for any $\alpha\in[0,2)$, $p'\in[p,\infty]$, $q'\in[q,\infty]$ with
$$
\beta:=2-\alpha+\frac{2}{q'}+\frac{d}{p'}-\(\frac{2}{q}+\frac{d}{p}\)>0,
$$
there is a constant  $C=C(\Theta,\alpha,p',q')>0$ such that for all $\lambda\ge\lambda_0$
\begin{align}\label{CCA02}
\lambda^{\frac{\beta}{2}}
\nor u\nor_{\widetilde{\mH}^{\alpha,p'}_{q'}(T)}
+\nor\p_tu\nor_{\widetilde{\mL}^p_q(T)}
+\nor\nabla^2 u\nor_{\widetilde{\mL}^p_q(T)}
\le C\(\nor f\nor_{\widetilde{\mL}^p_q(T)}+\nor\varphi\nor_{2,p}\).
\end{align}
\el
\br\label{Re001}\rm
By Lemma \ref{Eml}, we have $u\in L^\infty([0,T]; \bC^\gamma)$ for any $\gamma\in(1,2-2/q-d/p)$.
\er
\begin{proof}
We note that $u$ is a solution to PDE \eqref{Pre4:PDE} with
$u(0)=\varphi$ in the sense of Definition \ref{Pre4:Def}
if and only if $\bar{u}:=u-\varphi$ is a solution to PDE
\eqref{Pre4:PDE} with $\bar{u}(0)=0$ and
$f=f+a_{ij}\p_i\p_j\varphi-\lambda\varphi-b\cdot\nabla \varphi$.
Based on Lemma \ref{Eml}, we have
\begin{align*}
\nor f+a_{ij}\p_i\p_j\varphi-\lambda\varphi-b\cdot\nabla \varphi\nor_{\widetilde{\mL}^p_q(T)}&\lesssim \nor f\nor_{\widetilde{\mL}^p_q(T)}+ \nor \varphi\nor_{2,p}+ \nor b\nor_{\widetilde{\mL}^p_q(T)}\|\nabla\varphi\|_\infty\\
&\lesssim \nor f\nor_{\widetilde{\mL}^p_q(T)}+(1+ \nor b\nor_{\widetilde{\mL}^p_q(T)})\nor \varphi\nor_{2,p},
\end{align*}
and complete the proof by \cite[Theorem 3.2]{Xi-Xi-Zh-Zh}.
\end{proof}

With the help of a priori estimate \eqref{CCA02}, we obtain
the well-posedness of PDE \eqref{Pre4:PDE} for any $\lambda\geq0$.
\begin{proposition}\label{well-PDE}
Let $T>0$, $p,q\in(1,\infty)$ with $2/q+d/p<1$, $\lambda\geq0$, $b\in\widetilde{\mL}_q^p(T)$.
Then for all $f\in \widetilde{\mL}^{p}_{q}(T)$ and $\varphi\in \widetilde{H}^{2,p}$
there is a unique solution $u$ to PDE \eqref{Pre4:PDE} on $[0,T]$
in the sense of Definition \ref{Pre4:Def} such that
\begin{equation}\label{CCA022}
\nor\nabla u\nor_{\widetilde{\mL}_T^\infty}
+\nor\p_tu\nor_{\widetilde{\mL}^p_q(T)}
+\nor u\nor_{\widetilde{\mH}_q^{2,p}(T)}\leq
C\left(\nor f\nor_{\widetilde{\mL}_q^p(T)}
+\nor\varphi\nor_{\widetilde{H}^{2,p}}\right),
\end{equation}
where $C=C(\Theta,\lambda)$.
\end{proposition}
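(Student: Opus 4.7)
The plan is to reduce to the large-$\lambda$ case already treated in Lemma \ref{lem210} via the standard exponential multiplier trick. Let $\lambda_0 = \lambda_0(\Theta)$ be the threshold from Lemma \ref{lem210} (for the given $p,q$ with $2/q+d/p<1$), and let $\lambda\ge 0$ be arbitrary. If $\lambda\ge\lambda_0$, Lemma \ref{lem210} already gives existence, uniqueness and the desired estimate, so assume $\lambda<\lambda_0$. Set $\mu:=\lambda_0-\lambda>0$ and define
$$
v(t,x):=\e^{\mu t}u(t,x),\qquad \tilde f(t,x):=\e^{\mu t}f(t,x).
$$
A direct computation shows that $u$ is a solution (in the sense of Definition \ref{Pre4:Def}) of \eqref{Pre4:PDE} with parameter $\lambda$ if and only if $v$ is a solution of
$$
\p_tv=a_{ij}\p_i\p_j v-\lambda_0 v+b\cdot\nabla v+\tilde f,\qquad v(0)=\varphi,
$$
and the norms transform according to
$$
\nor \tilde f\nor_{\widetilde{\mL}_q^p(T)}\le \e^{\mu T}\nor f\nor_{\widetilde{\mL}_q^p(T)},\qquad
\nor v\nor_{\widetilde{\mH}_q^{2,p}(T)}\le \e^{\mu T}\nor u\nor_{\widetilde{\mH}_q^{2,p}(T)},
$$
and similarly for $\p_t$ and $\nabla$.

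Next I would apply Lemma \ref{lem210} to this transformed equation, which has the admissible drift $b$, data $\tilde f\in\widetilde{\mL}_q^p(T)$, initial datum $\varphi\in\widetilde{H}^{2,p}$ and parameter $\lambda_0$. This produces a unique solution $v$ together with the estimate \eqref{CCA02}. To capture the sup-norm of $\nabla u$ in \eqref{CCA022}, I would choose the parameters in \eqref{CCA02} to be $\alpha=1$, $p'=q'=\infty$; the hypothesis $2/q+d/p<1$ yields
$$
\beta=1-\tfrac{2}{q}-\tfrac{d}{p}>0,
$$
so the embedding is admissible and one obtains
$$
\nor\nabla v\nor_{\widetilde{\mL}_T^\infty}\le\nor v\nor_{\widetilde{\mH}_\infty^{1,\infty}(T)}
\le C\(\nor\tilde f\nor_{\widetilde{\mL}_q^p(T)}+\nor\varphi\nor_{2,p}\),
$$
using that $\widetilde{L}^\infty=L^\infty$ as noted in the embedding discussion. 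Combined with the $\p_t$- and $\nabla^2$-estimates from \eqref{CCA02}, this bounds every norm on the left-hand side of \eqref{CCA022} in terms of $\nor\tilde f\nor$ and $\nor\varphi\nor_{2,p}$.

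Transferring back to $u$ via $u=\e^{-\mu t}v$, the exponential factors produce a constant of size $\e^{\mu T}=\e^{(\lambda_0-\lambda)T}$, which I would absorb into the final constant $C=C(\Theta,\lambda)$; this yields \eqref{CCA022}. Uniqueness for $u$ is immediate from the bijection $u\leftrightarrow v$ together with the uniqueness clause in Lemma \ref{lem210}. The main technical point is verifying that the parameter choice $(\alpha,p',q')=(1,\infty,\infty)$ is indeed admissible under the running assumption $2/q+d/p<1$ and that it delivers precisely the $\widetilde{\mL}_T^\infty$-bound on $\nabla u$ appearing in \eqref{CCA022}; this is straightforward but is the only place where the restriction $2/q+d/p<1$ (rather than $<2$) is used. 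Everything else is routine linear-PDE bookkeeping.
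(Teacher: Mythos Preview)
Your exponential-multiplier strategy is a valid alternative to the paper's argument, but you have a sign slip in the substitution. If $v=e^{\mu t}u$ and $u$ solves $\p_tu=a_{ij}\p_i\p_ju-\lambda u+b\cdot\nabla u+f$, then
\[
\p_tv=a_{ij}\p_i\p_jv-(\lambda-\mu)v+b\cdot\nabla v+e^{\mu t}f,
\]
so to land on dissipation $\lambda_0$ you need $\mu=\lambda-\lambda_0<0$, not $\mu=\lambda_0-\lambda$. With the corrected sign the argument goes through unchanged: $\tilde f=e^{(\lambda-\lambda_0)t}f$ has $\nor\tilde f\nor\le\nor f\nor$, and the exponential loss of size $e^{(\lambda_0-\lambda)T}$ appears when you pass from $v$ back to $u=e^{(\lambda_0-\lambda)t}v$ (also remember that $\p_tu$ picks up an extra $\mu v$ term, which is harmless). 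One further small point: Lemma \ref{lem210} allows $p'=\infty$ in its statement, but the Bessel space $H^{1,\infty}$ is not defined in the paper; it is cleaner to take $\alpha\in(1+d/p,\,2-2/q)$, $p'=p$, $q'=\infty$ in \eqref{CCA02} and then invoke the embedding Lemma \ref{Eml} to reach $\|\nabla v\|_{\mL^\infty_T}$.

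The paper takes a different route: instead of the exponential multiplier, it rewrites the equation as $\p_tu=a_{ij}\p_i\p_ju-(\lambda+\lambda_0)u+b\cdot\nabla u+(f+\lambda_0 u)$, applies Lemma \ref{lem210} with the extra source term $\lambda_0 u$, obtains an inequality of the form $\sup_t\nor u(t)\nor_p\lesssim\nor f\nor+\lambda_0\bigl(\int_0^T\nor u\nor_p^q\bigr)^{1/q}+\nor\varphi\nor$, and then closes by Gronwall. Your multiplier trick avoids the Gronwall step entirely and is arguably more direct; the paper's method, on the other hand, would generalize more readily if the lower-order perturbation were not a pure constant multiple of $u$.
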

\begin{proof}
By the standard continuity method, it suffices to show a priori estimate \eqref{CCA022} for \eqref{Pre4:PDE}.
To this end, we rewrite \eqref{Pre4:PDE} as
\begin{equation*}%\label{rePDE}
\p_tu=a_{ij}\p_i\p_ju-(\lambda+\lambda_0)u+b\cdot\nabla u
+f+\lambda_0u,
\end{equation*}
where $\lambda_0$ is as in Lemma \ref{lem210}. In view of Lemma \ref{lem210}, we have
\begin{align}\label{RePDE1}
(\lambda+\lambda_0)^{\frac{\beta}{2}}\nor u\nor_{\widetilde{\mH}^{\alpha,p}_{q'}}
+\nor\p_tu\nor_{\widetilde{\mL}^p_q(T)}
+\nor\nabla^2u\nor_{\widetilde{\mL}_q^p(T)}\leq
C\left(\nor f\nor_{\widetilde{\mL}_q^p(T)}
+\lambda_0\nor u\nor_{\widetilde{\mL}_q^p(T)}
+\nor\varphi\nor_{2,p}\right),
\end{align}
where $C=C(\Theta,\alpha,q')>0$, $\beta=2-\alpha+\frac{2}{q'}-\frac{2}{q}>0$.
Taking $q'=\infty$ in \eqref{RePDE1}, then we have
\begin{align*}%\label{RePDE2}
(\lambda+\lambda_0)^{\frac{\beta}{2}}\sup_{t\in[0,T]}
\nor u(t)\nor_p\leq
C\left(\nor f\nor_{\widetilde{\mL}_q^p(T)}
+\lambda_0\left(\int_0^T\nor u(t)\nor_p^q\dif t\right)^{\frac{1}{q}}
+\nor\varphi\nor_{2,p}\right).
\end{align*}
Now it follows from Gronwall's Lemma that
\begin{align}\label{RePDE3}
\sup_{t\in[0,T]}\nor u(t)\nor_p\leq C\left(
\nor f\nor_{\widetilde{\mL}_q^p(T)}
+\nor\varphi\nor_{2,p}\right),
\end{align}
where $C$ depends on $\Theta,\alpha,\lambda$. Combining
\eqref{RePDE1} and \eqref{RePDE3}, we obtain for $1<\alpha<2-2/q$
\begin{align*}
\nor u\nor_{\widetilde{\mH}^{\alpha,p}}
+\nor\p_tu\nor_{\widetilde{\mL}^p_q(T)}
+\nor u\nor_{\widetilde{\mH}_q^{2,p}(T)}\leq
C\left(\nor f\nor_{\widetilde{\mL}_q^p(T)}
+\nor\varphi\nor_{2,p}\right).
\end{align*}
and complete the proof by Lemma \ref{Eml}.
\end{proof}

\subsection{Elliptic equation}
Now we consider the following second order elliptic PDE in $\mR^d$:
\begin{align}\label{Pre3:PDE}
a_{ij}\p_i\p_ju-\lambda u+b\cdot\nabla u=f,
\end{align}
where $\lambda\ge0$, $a=(a_{ij}):\R^d\rightarrow\R^d\otimes\R^d$
is a symmetric matrix-valued Borel measurable function satisfying {\bf(H$_a$)} and
$b:\R^d\rightarrow\R^d$ is a vector-valued Borel measurable function.
Firstly, we introduce the definition of a solution to PDE \eqref{Pre3:PDE}.
\bd\label{Pre3:Def}\rm
Let $p\in(1,\infty)$, $\lambda,T\ge0$ and $b,f\in \widetilde{L}^p$. We call
$u\in\widetilde{H}^{2,p}$ a {\em solution} of PDE \eqref{Pre3:PDE} if for Lebesgue almost all $x\in\mR^d$,
\begin{align*}
a_{ij}\p_i\p_ju(x)-\lambda u(x)+b(x)\cdot\nabla u(x)=f(x).
\end{align*}
\ed
As a corollary of Lemma \ref{lem210}, we have the following results.

\bl
Assume $b\in \widetilde{L}^{p}$ for some $p>d$. Then
there are constants $\lambda_0=\lambda_0(d,\nor b\nor_p,p,c_0,\theta)$
and $C=C(d,\nor b\nor_p,p,p',c_0,\theta)$ such that for any $\lambda\geq\lambda_0$
and $f\in\widetilde{L}^{p}$, there exists a unique solution
$u$ to PDE \eqref{Pre3:PDE} in the sense of Definition \ref{Pre3:Def} such that
\begin{align}\label{CC02}
\lambda^{\frac{\beta}{2}}
\nor u\nor_{\widetilde{H}^{\alpha,p'}}
+\nor\nabla^2 u\nor_{p}\le C\nor f\nor_{p},
\end{align}
where $\alpha\in[0,2),p\in[p,\infty], p'\in[p,\infty]$ with
$\beta:=2-\alpha+\frac{d}{p'}-\frac{d}{p}>0$.
\el
\begin{proof}
As usual, it suffices to show the a priori estimate \eqref{CC02}.
Let $T>0$, $u$ be a solution to \eqref{Pre3:PDE}
and $\phi$ be a nonnegative and nonzero smooth function on $[0,\infty)$ with $\phi(0)=0$.
Define $\tilde{u}(t,x):=\phi(t)u(x)$. Then, one sees that
$\tilde{u}$ is a solution to the following parabolic
equation in the sense of Definition \ref{Pre4:Def}:
\begin{align*}
\p_t\tilde{u}=a_{ij}\p_i\p_j\tilde{u}-\lambda\tilde{u}+b\cdot\nabla\tilde{u}-\phi f+\phi'u,\quad \tilde{u}(0)=0.
\end{align*}
By \eqref{CCA02}, we have for any $\alpha\in[0,2),p'\in[p,\infty]$ with
$\beta:=2-\alpha+\frac{d}{p'}-\frac{d}{p}>0$,
\begin{align*}
\lambda^{\frac{\beta}{2}}
\nor \tilde{u}\nor_{\widetilde{\mH}^{\alpha,p'}(T)}
+\nor\nabla^2 \tilde{u}\nor_{\widetilde{\mL}^p(T)}\le C\nor\phi' u-\phi f\nor_{\widetilde{\mL}^p(T)},
\end{align*}
which implies that
\begin{align*}
\lambda^{\frac{\beta}{2}}
\nor u\nor_{\widetilde{H}^{\alpha,p'}}
+\nor\nabla^2 u\nor_{p}\le C\|\phi\|^{-1}_\infty
\left(\|\phi'\|_\infty \nor u\nor_p+\|\phi\|_\infty \nor f\nor_p\right),
\end{align*}
where $\|\phi\|_\infty:=\sup_{t\in[0,T]}|\phi(t)|$. Noting that $\nor u\nor_p\le \nor u\nor_{p'}$,
%by taking $\lambda$ large enough,
we obtain \eqref{CC02} and complete the proof.

\end{proof}

\section{Analysis of time regularity}\label{Section3}
{ In this section,
 letting $T>0$, we assume that
 \begin{align}\label{NBB03}
 B\in \widetilde{\mL}^{p_0}(T)\text{ and $\sigma$ $:\mR^d\to\mR^d\otimes\mR^d$ satisfy {\bf (H$_\sigma$)} for some $p_0>d$}.
\end{align}
and consider the following SDE on a probability space
$(\Omega,\sF,(\sF_t)_{t\ge0}, \mP)$:
\begin{equation}\label{SZZ001}
X_{s,t}^x=x+\int_s^tB(r,X_{s,r}^x)\dif r
+\int_s^t\sigma(X_{s,r}^x)\dif W_r,
\end{equation}
where $W_t$ is a standard $d$-dimensional Brownian motion. Furthermore, consider the PDE on $[0,T]\times\mR^d$
\begin{align}\label{NBB00}
\p_tu=a_{ij}\p_i\p_ju-\lambda u+B\cdot\nabla u+f,\quad u(0)=\varphi,
\end{align}
where $\lambda\ge0$, $f\in \widetilde{\mL}^{p_0}(T)$, $\varphi\in C^\infty_b$ and $a_{ij}:=\frac{1}{2}\sum\limits_{k=1}^{d}\sigma_{ik}\sigma_{jk}$.
Under condition \eqref{NBB03}, by \cite{Xi-Xi-Zh-Zh}
there is a unique strong solution $X_{s,\cdot}^x$ to \eqref{SZZ001} for any $(s,x)\in\mR_+\times\mR^d$.
The purpose of this section is to obtain some moment estimates for the following functionals of $X_{0,t}^x$
\begin{align*}
\int_0^T f(s,X^x_{0,\pi_h(s)})\dif s\quad \text{and}\quad\int_0^T\left[f(s,X^x_{0,s},\mu^x_{s})-f(s,X^x_{0,\pi_h(s)},\mu^x_{\pi_h(s)}) \right]\dif s,
\end{align*}
where $\mu^x_t$ is the distribution of $X^x_{0,t}$ and $f\in \mL^p_q(T)$ for some $2/q+d/p<2$.
{The first integral is estimated by Krylov's type estimates.
Compared to the case of smooth coefficients in \cite{SXW2022}, $f$ in the second integral has no regularity.}
To overcome this obstacle, we use the observation mentioned in the introduction to replace
$f(X^x_{0,t})-f(X^x_{0,\pi_h(t)})$ by $P^X_{0,t}f-P^X_{0,\pi_h(s)}f$,
where $P^X$ is the transition semi-group of $X$. Hence, we only need to
obtain some time regularity results for the semigroup.

In Subsection \ref{sec:3.1}, we consider the time-homogeneous case with $B\equiv0$.
By Girsanov's Theorem, we extend the results in Subsection \ref{sec:3.1} to $X^x_{s,t}$ in Subsection \ref{Sec3.1}.
Moreover, we obtain additional time regularity estimates for $P^X$ by Duhamel's formula
which can not be gotten from Girsanov's theorem. In the light of Duhamel's formula again,
we also have two time regularity estimates for $\nabla u$ in Subsection \ref{SubS32},
where $u$ is the solution to \eqref{NBB00}.

For simplicity, throughout this section we set
\begin{align*}
\Xi:=(d,T,p_0,\nor B\nor_{\widetilde{{\mL}}^{p_0}(T)},\kappa_1,\beta).
\end{align*}

}

\subsection{Time regularity for solutions to SDE with no drift}\label{sec:3.1}
First of all, we recall the following generalized It\^o formula from
\cite[Lemma 4.1-iii)]{Xi-Xi-Zh-Zh} (see also \cite[Theorem 3.7]{KR05} for the original one).
\bl[Generalized It\^o formula]
Let $p,q\in[2,\infty)$ with $2/q+d/p<1$.
For any $T>0$ and any $u\in\widetilde{\mH}_q^{2,p}(T)$ with $\p_tu\in \widetilde{\mL}^p_q(T)$,
we have for any $t\in[s,T]$ and $x\in\mR^d$,
\begin{align}\label{GIF001}
\begin{split}
u(t,X^x_{s,t})=u(s,x)&+\int_s^t(\p_ru+a_{ij}\p_i\p_j u
+B\cdot\nabla u)(r,X_{s,r}(x))\dif r\\
&+\int_s^t \nabla u(r,X_{s,r}(x))\dif W_r.
\end{split}
\end{align}
\el

In this subsection, we consider the following case where $B\equiv0$:
\begin{equation}\label{SZZ00}
Z_t^x=x+\int_0^t\sigma(Z_s^x)\dif W_s.
\end{equation}
Define $P_t^\sigma f(x):=\mE f(Z_t^x)$. By Proposition \ref{well-PDE}, there is a unique solution to the following second order parabolic PDE on $[0,T]\times\mR^d$:
\begin{align}\label{PreZ:PDE}
\p_tu=a_{ij}\p_i\p_ju,\quad u_0=\varphi.
\end{align}

\begin{lemma}\label{PR-PDE:Z}
Assume {\bf (H$_\sigma$)} holds. Let $0\leq s\leq t$, $\varphi\in C_b^\infty(\R^d)$, $u$ and
$Z_{t}^x$ be the solution to \eqref{PreZ:PDE} and \eqref{SZZ00} respectively. Then we have $\mP$-a.s.
\begin{equation}\label{PR-PDE:Zeq}
\mE\left[\varphi(Z_t^x)|\sF_s\right]=u(t-s,Z_s^x).
\end{equation}
In particular,
\begin{align}\label{ChH00}
\mE\left[\varphi(Z_t^x)|\sF_s\right]=P^\sigma_{t-s}\varphi(Z_s^x)\quad
\mP-{\rm a.s.}
\end{align}
Moreover, for any $t\ge0$ and $f\in C^2_b$,
\begin{align}\label{NAA01}
P^\sigma_{t}f-f=\int_0^t P^\sigma_{r}(a_{ij}\p_i\p_jf)\dif r.
\end{align}
\end{lemma}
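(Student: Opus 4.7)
The plan is to derive all three identities from the generalized It\^o formula \eqref{GIF001}, specialized to the drift-free setting $B\equiv 0$. \textbf{Step 1} (regularity of $u$). Since $B\equiv 0$ trivially belongs to $\widetilde{\mL}_q^{p_0}(T)$ and $\varphi\in C_b^\infty$ is in $\widetilde{H}^{2,p_0}$ (its localizations $\chi_r^z\varphi$ are smooth with compact support), Proposition \ref{well-PDE} provides a unique solution $u$ to \eqref{PreZ:PDE} satisfying $u\in\widetilde{\mH}_q^{2,p_0}(T)$, $\p_tu\in\widetilde{\mL}_q^{p_0}(T)$ and, crucially, $\nabla u\in\widetilde{\mL}_T^\infty$. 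Choosing $q$ sufficiently large that $2/q+d/p_0<1$ makes $u$ admissible in \eqref{GIF001}.

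\textbf{Step 2} (proof of \eqref{PR-PDE:Zeq} and \eqref{ChH00}). Set $v(r,y):=u(t-r,y)$ for $r\in[0,t]$, so that $v$ solves the backward equation $\p_rv+a_{ij}\p_i\p_jv=0$ with terminal value $v(t,\cdot)=\varphi$, and inherits the regularity of $u$ with $\p_rv=-\p_tu|_{t-r}$. Applying \eqref{GIF001} (with $B\equiv 0$) to $v$ along $Z_r^x$ between $s$ and $t$ yields
\begin{align*}
\varphi(Z_t^x)=u(t-s,Z_s^x)+\int_s^t(\p_rv+a_{ij}\p_i\p_jv)(r,Z_r^x)\dif r+\int_s^t\nabla v(r,Z_r^x)\sigma(Z_r^x)\dif W_r.
\end{align*}
The Lebesgue integral vanishes by the backward PDE, while the stochastic integrand is bounded (since $\nabla u\in\widetilde{\mL}_T^\infty$ and $|\sigma|\leq\kappa_1$), so the stochastic integral is a genuine martingale. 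Conditioning on $\sF_s$ gives \eqref{PR-PDE:Zeq}. Taking $s=0$ identifies $u(t,x)=P_t^\sigma\varphi(x)$, and substituting this identity into \eqref{PR-PDE:Zeq} produces \eqref{ChH00}.

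\textbf{Step 3} (proof of \eqref{NAA01}). For $f\in C_b^2$ the classical It\^o formula applied directly to $f(Z_t^x)$ (valid since $f\in C^2$ regardless of the H\"older-only regularity of $\sigma$) gives
\begin{align*}
f(Z_t^x)=f(x)+\int_0^t\nabla f(Z_r^x)\sigma(Z_r^x)\dif W_r+\int_0^ta_{ij}\p_i\p_jf(Z_r^x)\dif r,
\end{align*}
whose stochastic integral is a martingale by the boundedness of $\nabla f$ and $\sigma$. Taking expectations and applying Fubini yields \eqref{NAA01}. The main subtlety in the argument is the one addressed in Step 1: checking that $v(r,y)=u(t-r,y)$ carries enough regularity for \eqref{GIF001} to apply. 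This is where the freedom $B\equiv 0$ is essential, since it allows $q$ to be chosen arbitrarily large and brings us into the regime $2/q+d/p_0<1$ without any extra assumption on the drift.
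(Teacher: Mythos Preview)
Your proof is correct and follows essentially the same route as the paper: apply the generalized It\^o formula \eqref{GIF001} to $r\mapsto u(t-r,Z_r^x)$, use the PDE to kill the drift term, take conditional expectations for \eqref{PR-PDE:Zeq} and \eqref{ChH00}, and use the classical It\^o formula plus expectation for \eqref{NAA01}. Your Step~1 makes explicit the regularity check (and the martingale property of the stochastic integral via $\nabla u\in\widetilde{\mL}_T^\infty$) that the paper leaves implicit, which is a welcome addition rather than a departure.
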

\begin{proof}
For all $t>0$, applying the generalized It\^o formula \eqref{GIF001}
to $s\mapsto u(t-s,Z_s^x)$, we have
\begin{align*}
u(0,Z_t^x)=u(t-s,Z_s^x)+\int_s^t\left(-\p_ru(t-r,Z_r^x)
+a_{ij}\p_i\p_ju(t-r,Z_r^x)\right)\dif r
+\int_s^t\nabla u(t-r,Z_r^x)\dif W_r.
\end{align*}
Noting that $u(t-s,Z_s^x)$ is $\sF_s$-measurable, and
taking conditional expectation with respect to $\sF_s$ on both sides, we have
\begin{align*}
\mE\left[\varphi(Z_t^x)|\sF_s\right]=u(t-s,Z_s^x)\quad
\mP-{\rm a.s.},
\end{align*}
which for $s=0$ implies that
\begin{align*}
P^\sigma_t\varphi(x)=\mE\varphi(Z_t^x)=u(t,x).
\end{align*}
Then \eqref{ChH00} is straightforward from \eqref{PR-PDE:Zeq}.
For \eqref{NAA01}, since $f\in C^2_b$, we use the classical It\^o formula and have
\begin{align*}
f(Z^x_{t})=f(x)+\int_0^t a_{ij}\p_i\p_j f(Z^x_{r})\dif r+\int_0^t \nabla f(Z^x_{r})\dif W_r.
\end{align*}
Then, we have \eqref{NAA01} by taking expectation and complete the proof.
\end{proof}

Based on Lemma \ref{PR-PDE:Z} and the uniqueness of \eqref{PreZ:PDE},
we have the following Chapman-Kolmogorov equations
\begin{align}\label{CKE00}
P^\sigma_{s}P^\sigma_{t}=P^\sigma_{s+t}.
\end{align}
Set
$$
\rho_t(x):=(2\pi t)^{-d/2}e^{-|x|^2/(2t)}.
$$
Then the following lemma is from \cite[Theorem 2.3]{CHXZ}.
\begin{lemma}\label{TDeqlem}
Assume {\bf (H$_\sigma$)} holds. Then there is a unique function
$p_\cdot^\sigma(\cdot,\cdot):\R_+\times\R^{2d}\rightarrow\R$ such that for any $j=0,1,2$
\begin{equation}\label{TDeq}
|\nabla_x^jp_t^\sigma(x,y)|\leq c_1t^{-\frac{j}{2}}\rho_{c_2t}(x-y)
\end{equation}
and
\begin{equation}\label{PR-RE}
  P_t^\sigma f(x)=\int_{\R^d}f(y)p_t^\sigma(x,y)\dif y
\end{equation}
for any $f\in C(\mR^d)$, where $c_1$ and $c_2$ are positive constants depending on $\Xi$.
\end{lemma}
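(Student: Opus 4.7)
The plan is to construct $p^\sigma_t(x,y)$ via Levi's parametrix method (freezing coefficients at the pole $y$), and then to identify the kernel as the transition density of $Z_t^x$ by combining the probabilistic formula of Lemma \ref{PR-PDE:Z} with the PDE uniqueness in Proposition \ref{well-PDE}. The hypotheses in (H$_\sigma$) are exactly what the method needs: $a_{ij}(x) = \tfrac12\sum_k \sigma_{ik}\sigma_{jk}(x)$ is uniformly elliptic with bounds depending only on $\kappa_1$, and inherits the $\beta$-H\"older regularity $\|a(x)-a(y)\|_{HS} \lesssim |x-y|^\beta$ from $\sigma$. The Gaussian derivative estimates \eqref{TDeq} will then fall out of the parametrix series.

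Concretely, for each frozen pole $y$, the constant-coefficient operator $L_y := a_{ij}(y)\p_i\p_j$ has the explicit Gaussian fundamental solution
\[
G(t,x,y) := (4\pi t)^{-d/2}(\det a(y))^{-1/2}\exp\!\left(-\tfrac{\langle a(y)^{-1}(x-y),\,x-y\rangle}{4t}\right),
\]
which by direct differentiation satisfies $|\nabla_x^j G(t,x,y)| \le C\, t^{-j/2}\rho_{ct}(x-y)$ for $j=0,1,2$ with constants depending only on $d$ and $\kappa_1$. I would then seek
\[
p^\sigma_t(x,y) = G(t,x,y) + \int_0^t\!\!\int_{\R^d} G(t-s,x,z)\,\Phi_s(z,y)\,dz\,ds,
\]
with $\Phi$ determined by the Volterra equation $\Phi_t = H_t + H \ast \Phi$, where $H_t(x,y) := (a_{ij}(x) - a_{ij}(y))\p_i\p_j G(t,x,y)$. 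The H\"older bound on $a$ together with the bound on $\nabla^2 G$ gives $|H_t(x,y)| \le C\,t^{-1+\beta/2}\rho_{ct}(x-y)$, whose time singularity is integrable; Picard iteration then produces $\Phi$ with the same Gaussian-type bound, and differentiating the ansatz in $x$ yields \eqref{TDeq} for $j=0,1,2$.

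For the identification step, given $\varphi \in C_b^\infty$, the function $v(t,x) := \int p^\sigma_t(x,y)\varphi(y)\,dy$ solves the Cauchy problem \eqref{PreZ:PDE} by construction of the parametrix, so Proposition \ref{well-PDE} (uniqueness) forces $v = u$, and Lemma \ref{PR-PDE:Z} then gives $P^\sigma_t\varphi(x) = u(t,x) = v(t,x)$, which is \eqref{PR-RE}. Uniqueness of $p^\sigma_t$ is then immediate, since it is characterized as the density of the law of $Z_t^x$ via the action on $C(\R^d)$.

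\textbf{The hard step.} The delicate point is the bound for $\nabla_x^2 p^\sigma_t$. Differentiating the parametrix convolution twice in $x$ naively produces a non-integrable $(t-s)^{-1}$ time-singularity at $s=t$. I would handle this in the classical way: add and subtract $\Phi_t(x,y)$ under the integral and exploit the spatial H\"older regularity of $\Phi$ (inherited from $H$ through the Volterra iteration), so that the singular part is cancelled by the centering while the H\"older difference produces an extra $(t-s)^{\beta/2}$ factor that restores integrability. This cancellation is the crux of the Levi method, and it is precisely where the full strength of (H$_\sigma$) - ellipticity together with $\beta$-H\"older regularity of $\sigma$ - is used quantitatively.
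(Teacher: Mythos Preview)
Your sketch of the parametrix construction is sound and is the standard route to such heat-kernel bounds. However, the paper itself does not prove this lemma: it simply states that the result is taken from \cite[Theorem 2.3]{CHXZ}. So there is nothing to compare against beyond noting that Levi's parametrix method is indeed the classical approach underlying such results (and presumably the approach in the cited reference as well). Your identification step via Proposition~\ref{well-PDE} and Lemma~\ref{PR-PDE:Z} is a clean way to connect the PDE kernel to $P^\sigma_t$, and your remark on the $\nabla_x^2$ bound correctly locates the delicate point.
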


For any $h\in(0,1)$, recall that $\pi_h(t):=t$ for $t\in[0,h)$ and
\begin{align*}
\pi_h(t):={kh},\ \ t\in[{kh},{(k+1)h}), ~k\ge1.
\end{align*}
\br
The reason why we define $\pi_h(t)=t$ for $t\in[0,h)$ is that the function space here is $L^p$.
If the initial data don't have an $L^q$ density, $\mE f(Z_{\pi_h(t)})=\mE f(Z_0)$, $f\in L^p$, will blow up for all $t<{h}$.
\er
Now we give the following Krylov estimate and Khasminskii estimate.
\bl
Assume {\bf (H$_\sigma$)} holds. For any $T>0$, $k=0,1,2$, $p\in[1,\infty]$ and $q\in[p,\infty]$, there is a constant $C=C(\Xi,p,q)$ such that for all $0\le s<t\le T$, $x\in\mR^d$ and nonnegative functions $f\in\widetilde{L}^p$
\begin{align}\label{BB00}
\nor \nabla^kP^\sigma_tf\nor_q\le Ct^{-k/2-d/(2p)+d/(2q)}\nor f\nor_{p}
\end{align}
and for $2/q+d/p<2$, $h>0$ and nonnegative functions $f\in\widetilde{\mL}^p_q(T)$
\begin{align}\label{Kry00}
\mE\int_s^t f(r,Z_r^x)\dif r+\mE\int_s^t f(r,Z^x_{\pi_h(r)})\dif r\le C(t-s)^{1-\frac{1}{q}-\frac{d}{2p}}
\nor f\nor_{\widetilde{\mL}^p_q(T)}.
\end{align}
Moreover, for any $f\in\widetilde{\mL}^p_q(T)$ with $d/p+2/q<2$,
\begin{align}\label{Kh00}
\sup_{x\in\mR^d}\mE\exp\left(\int_0^T f(t,Z^x_t)\dif t\right)<\infty.
\end{align}

\el
\begin{proof}
Without loss of generality we assume that $c_2=1$ in \eqref{TDeq}.
Combining Lemma \ref{TDeqlem} and Young's convolution inequality, one sees that
\begin{align*}
\nor\nabla^kP^\sigma_{t}f\nor_{q}
&
\lesssim \nor\int_{\R^d}f(y)\nabla_x^kp_t^\sigma(\cdot,y)\dif y\nor_q\\
&
\lesssim t^{-k/2}\nor\rho_{t}*f\nor_q
\lesssim t^{-k/2}\sup_w\|\1_{|\cdot-w|\le 1}\int_{\mR^d}f(\cdot-y)\rho_t(y)\dif y\|_q\\
&
\lesssim t^{-k/2}\sup_w\|\1_{|\cdot-w|\le 1}
\frac{1}{|B_1|}\int_{\mR^d}\int_{\mR^d}\1_{|y-z|\le 1}
f(\cdot-y)\1_{|y-z|\le1}\rho_t(y)\dif y\dif z\|_q\\
&
\lesssim t^{-k/2} \sup_w\|\int_{\mR^d}\int_{\mR^d}\1_{|\cdot-y-w+z|\le 2}f(\cdot-y)\1_{|y-z|\le1}\rho_t(y)\dif y\dif z\|_q\\
&
\lesssim t^{-k/2}\sup_w \int_{\mR^d}\|\int_{\mR^d}\1_{|\cdot-y-w+z|\le 2}f(\cdot-y)\1_{|y-z|\le1}\rho_t(y)\dif y\|_q\dif z\\
&
\lesssim t^{-k/2}\int_{\mR^d}\sup_w\|\1_{|\cdot-w+z|\le 2}f(\cdot)\|_p\|\1_{|\cdot-z|\le1}\rho_t(\cdot)\|_r\dif z\\
&
\lesssim  t^{-k/2} \int_{\mR^d}\left(\int_{|y-z|\le1}|\rho_t(y)|^{r}\dif y\right)^{1/r}\dif z\nor f\nor_p,
\end{align*}
where $1+1/q=1/r+1/p$.
Next, one sees that
\begin{align*}
\int_{\mR^d}\left(\int_{|y-z|\le1}(\rho_t(y))^{r}\dif y\right)^{1/r}\dif z
&
\lesssim \|\rho_t\|_{r}+\int_{|z|>2}\left(\int_{|y-z|\le1}(\rho_t(y))^{r}\dif y\right)^{1/r}\dif z.\\
\end{align*}
We note that
\begin{align*}
|z|>2,|y-z|\le1\Rightarrow |y|\ge|z|-|y-z|\ge\frac{|z|}{2},
\end{align*}
which implies that $\rho_t(y)\lesssim \rho_{t}(z/2)$, and
$\|\rho_t\|_{r}\lesssim t^{-d/2+d/(2r)}=t^{-d/(2p)+d/(2q)}$, we have
\begin{align*}
\int_{\mR^d}\left(\int_{|y-z|\le1}(\rho_t(y))^{r}\dif y\right)^{1/r}\dif z
&
\lesssim \|\rho_t\|_{r}+ \int_{|z|>2}\left(\int_{|y-z|\le1}(\rho_t(z/2))^{r}\dif y\right)^{1/r}\dif z\\
&
\lesssim t^{-d/(2p)+d/(2q)}+ \int_{|z|>2}\rho_t(z/2)\dif z\lesssim t^{-d/(2p)+d/(2q)}+1
\end{align*}
and obtain \eqref{BB00}.

Now we show \eqref{Kry00}. Set $p':=\frac{p}{p-1}$ and $q':=\frac{q}{q-1}$. Without loss of generality, we take $s=0$.
By \eqref{BB00}, for any $h>0$,
\begin{align*}
\mE\int_0^t f(s,Z^x_{\pi_h(s)})\dif s\lesssim \int_{\mR^d}\Big(\int_0^t\(\int_{|y-z|\le1}|\rho_{\pi_h(s)}(y)|^{p'}\dif y\)^{q'/p'}\dif s\Big)^{1/q'}\dif z\nor f\nor_{\widetilde{\mL}^p_q(T)}.
\end{align*}
Then, we have
\begin{align*}
\sI&:=\int_{\mR^d}\Big(\int_0^t\(\int_{|y-z|\le1}|\rho_{\pi_h(s)}(y)|^{p'}\dif y\)^{q'/p'}\dif s\Big)^{1/q'}\dif z\\
&\lesssim \(\int_0^t\|\rho_{\pi_h(s)}\|_{p'}^{q'}\dif s\)^{1/q'}+\int_{|z|>2}\Big(\int_0^t|\rho_{\pi_h(s)}(z/2)|^{q'}\dif s\Big)^{1/q'}\dif z\\
&\lesssim  \(\int_0^t(\pi_h(s))^{-dq'/(2p)}\dif s\)^{1/q'}+t\int_{|z|>2}|z|^{-d}\exp(-|z|^2/(16T))\dif z\\
&\lesssim t^{1-1/q-d/(2p)}+t,
\end{align*}
since $dq'/(2p)<1$ and $\rho_s(z)\le C|z|^{-d}\exp(-\frac{|z|^2}{4T})$ for all $s<T$.
Similarly, we obtain
$$
\mE\int_0^tf(s,Z_s^x)\dif s\lesssim t^{1-\frac1q-\frac{d}{2p}}+t.
$$

Finally, noting that by \eqref{ChH00}
\begin{align*}
\mE\Big[\int_s^t f(s,Z_r^x)\dif r\Big|\sF_s\Big]=\mE\int_s^t f(s,Z_{r-s}^y)\dif r\Big|_{y=Z_s^x},
\end{align*}
 \eqref{Kh00} is direct from \eqref{Kry00} (see \cite[Corollary 3.5]{Zh-Zh1} for example) and we complete the proof.

\end{proof}

\br
We note that for any fixed $h>0$ and $x\in\mR^d$
\begin{align*}
\mE\exp\(\int_0^T f(t,Z^x_{\pi_h(t)})\dif t\)<\infty
\end{align*}
is not true. For example, letting $Z^x_t=W_t$, when $d\ge2$ and $f(t,x)=|x|^{-1/2}\in \widetilde{L}^{d+1}(\mR^d)$, we have
\begin{align*}
&\quad\mE\exp\left(\int_{h}^{2h} f(t,W_{\pi_h(t)})\dif t\right)=\mE\exp(h|W_{h}|^{-1/2})\\
&=\int_{\mR^d}e^{\frac{h}{\sqrt{|x|}}}\rho_h(x)\dif x\ge\frac{h^{2d}}{(2d)!}\int_{\mR^d}\frac{1}{|x|^d}\rho_h(x)\dif x=\infty.
\end{align*}

\er

\begin{lemma}\label{lemmm03}
Let $T>0$, $k=0,1$, and $1\le p\le q\le\infty$. Then there is a constant $C=C(\Xi,p,q)$ such that for all $0< s\le t \le T$,
\begin{equation}\label{MM03}
\nor \nabla^k(P^\sigma_t\varphi-P^\sigma_s\varphi)\nor_q\le C\Big([(t-s)^{\frac{2-k}{2}}s^{\frac{k-2}{2}}]\wedge1\Big)s^{-\frac{k}{2}-\frac{d}{2p}
+\frac{d}{2q}}\nor \varphi\nor_p.
\end{equation}

\end{lemma}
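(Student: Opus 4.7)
The plan is to combine the Chapman-Kolmogorov identity \eqref{CKE00} with the integral representation \eqref{NAA01} in order to express $P^\sigma_t\varphi - P^\sigma_s\varphi$ as an integral of the $a_{ij}\partial_i\partial_j$-image of $P^\sigma_s\varphi$, and then apply the heat-kernel gradient bound \eqref{BB00} with carefully chosen exponents. The estimate splits naturally into two regimes: the ``small $t-s$'' regime, handled by this integral representation, and the ``large $t-s$'' regime, handled by the triangle inequality applied to \eqref{BB00} directly.

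By density I may assume $\varphi \in C_c^\infty(\mR^d)$, since the target estimate is linear in $\varphi$ and \eqref{BB00} gives continuity of $P^\sigma_s:\widetilde L^p\to\widetilde L^q$ for any fixed $s>0$. Under this assumption, the representation \eqref{PR-RE} together with the heat-kernel bound \eqref{TDeq} guarantees $P^\sigma_s\varphi \in C^2_b(\mR^d)$ for every $s>0$. Using Chapman-Kolmogorov I write $P^\sigma_t\varphi = P^\sigma_{t-s}(P^\sigma_s\varphi)$, and applying \eqref{NAA01} to $f = P^\sigma_s\varphi$ yields
\begin{align*}
P^\sigma_t\varphi - P^\sigma_s\varphi = \int_0^{t-s} P^\sigma_r\bigl(a_{ij}\partial_i\partial_j P^\sigma_s\varphi\bigr)\,\dif r.
\end{align*}
Applying $\nabla^k$ under the integral, using the $L^\infty$-bound on $a_{ij}$ from {\bf (H$_\sigma$)}, and invoking \eqref{BB00} with $p=q$ on the inner semigroup gives $\nor \nabla^k P^\sigma_r(a_{ij}\partial_i\partial_j P^\sigma_s\varphi)\nor_q \le C r^{-k/2}\nor \nabla^2 P^\sigma_s\varphi\nor_q$, while \eqref{BB00} at order $k=2$ gives $\nor \nabla^2 P^\sigma_s\varphi\nor_q \le C s^{-1 - d/(2p) + d/(2q)}\nor\varphi\nor_p$. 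Integrating $r^{-k/2}$ over $[0,t-s]$ (finite since $k\le1$) produces the factor $(t-s)^{(2-k)/2}$, and combining the three bounds yields
\begin{align*}
\nor \nabla^k(P^\sigma_t\varphi - P^\sigma_s\varphi)\nor_q \le C(t-s)^{(2-k)/2}\,s^{-1 - d/(2p) + d/(2q)}\nor\varphi\nor_p,
\end{align*}
which is exactly the stated estimate with the first factor $(t-s)^{(2-k)/2}s^{(k-2)/2}$ after pulling out $s^{-k/2-d/(2p)+d/(2q)}$.

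For the complementary regime in which $(t-s)^{(2-k)/2}s^{(k-2)/2}\ge1$, I use the triangle inequality together with \eqref{BB00} applied directly to each of $\nabla^kP^\sigma_t\varphi$ and $\nabla^kP^\sigma_s\varphi$; since $s\le t$ and the exponent $-k/2-d/(2p)+d/(2q)$ is nonpositive (as $p\le q$), the bound at $s$ dominates and delivers the ``$\wedge1$'' branch. Taking the minimum of the two bounds and extending by density to arbitrary $\varphi\in\widetilde L^p$ concludes the proof. The main technical obstacle is that \eqref{NAA01} was proved only for $f\in C^2_b$, so one cannot apply it directly to a general $\widetilde L^p$ initial datum; this is precisely what forces the preliminary reduction to $\varphi\in C_c^\infty$ and the use of the heat-kernel regularization $P^\sigma_s\varphi\in C^2_b$ as the intermediate smooth function.
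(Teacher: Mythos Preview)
Your proposal is correct and follows essentially the same approach as the paper: write $P^\sigma_t\varphi-P^\sigma_s\varphi=\int_0^{t-s}P^\sigma_r(a_{ij}\partial_i\partial_jP^\sigma_s\varphi)\,\dif r$ via \eqref{CKE00} and \eqref{NAA01}, then apply \eqref{BB00} twice (once with $p=q$ to extract $r^{-k/2}$, once with order $2$ to extract $s^{-1-d/(2p)+d/(2q)}$), and finally use the triangle inequality with \eqref{BB00} for the ``$\wedge 1$'' branch. Your additional care in reducing to $\varphi\in C_c^\infty$ so that $P^\sigma_s\varphi\in C^2_b$ and \eqref{NAA01} applies is a justified refinement that the paper leaves implicit.
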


\begin{proof}
Based on \eqref{CKE00} and \eqref{NAA01}, one sees that
\begin{align*}
P^\sigma_t\varphi-P^\sigma_s\varphi=P^\sigma_{t-s}(P^\sigma_s\varphi)-P^\sigma_s\varphi=\int_0^{t-s}P^\sigma_r(a_{ij}\p_i\p_jP^\sigma_s\varphi)\dif r.
\end{align*}
By \eqref{BB00}, we have
\begin{align*}
\nor \nabla^k(P^\sigma_t\varphi-P^\sigma_s\varphi)\nor_q
&
\lesssim\int_0^{t-s}r^{-\frac{k}{2}}\nor\nabla^2P_s^\sigma\varphi\nor_q\dif r\\
&
\lesssim\int_0^{t-s}r^{-\frac{k}{2}}s^{-1+\frac{d}{2q}-\frac{d}{2p}}\nor\varphi\nor_p\dif r\\
&
\lesssim \Big[(t-s)^{\frac{2-k}{2}}s^{\frac{k-2}{2}}\Big]s^{-\frac{k}{2}-\frac{d}{2p}
+\frac{d}{2q}}\nor \varphi\nor_p.
\end{align*}

Moreover, noting that by \eqref{BB00}
\begin{align*}
\nor \nabla^k(P^\sigma_t\varphi-P^\sigma_s\varphi)\nor_q&\le \nor \nabla^kP^\sigma_t\varphi\nor_q+\nor\nabla^k P^\sigma_s\varphi\nor_q\\
&\lesssim s^{-\frac{k}{2}-\frac{d}{2p}
+\frac{d}{2q}}\nor \varphi\nor_p
\end{align*}
for $s\le t$, we complete the proof.
\end{proof}

When $p=\infty$ and $\sigma\equiv\mI$, the following lemma has been proved in \cite[Lemma 2.1]{DG20} for Brownian motion.
For classical $L^p$ spaces, L\^e and Ling obtained these results by the stochastic sewing lemma in \cite{LL21}.
For the localized $L^p$ space, we provide a different proof here,
which is based on \eqref{MM03} and \eqref{BB00}.
\bl\label{CruL}
Assume {\bf (H$_\sigma$)} holds. Then for any $T>0$ and $p\in(d\vee 2,\infty)$, there is a constant $C=C(\Xi,p)$ such that for any stopping time $\tau\le T$,  $h\in(0, 1)$, $x\in\mR^d$ and $f\in\widetilde{\mL}^p(T)$,
\begin{align}\label{BB03}
\mE\Big|\int_{0}^\tau(f(t,Z_t^x)-f(t,Z^x_{\pi_h(t)}))\dif t\Big|^2
\le C h\ln h^{-1}\nor f\nor_{\widetilde{\mL}^p(T)}^2.
\end{align}

\el
\begin{proof}
We divide the proof into four steps. In Step 1, we prove that
\begin{align}\label{MM00}
\mE\Big|\int_{s}^t(f(r,Z^x_r)-f(r,Z^x_{\pi_h(r)}))\dif r\Big|^2
\le C \(h s^{-\frac{d}{2p}}(t-s)^{1-\frac{d}{2p}}+ h\ln h^{-1}(t-s)^{1-\frac{d}{p}}\)\nor f\nor_{\widetilde{\mL}^p(T)}^2;
\end{align}
 In Step 2, we show \eqref{BB03} for $\tau=T$; In Step 3, we show
\begin{align}\label{DD000}
\sup_{a,b\in[0,T]}\mE\left(\int_{a}^b(f(t,Z^x_t)-f(t,Z^x_{\pi_h(t)}))\dif t\right)^2
\le C h\ln h^{-1}\nor f\nor_{\widetilde{\mL}^p(T)}^2;
\end{align}
In Step 4, we show \eqref{BB03} for any stopping time $\tau$ with $\tau\le T$.\\
\vspace{2mm}

{\bf (Step 1)}
{{For simplicity of notation}, we drop the time $t$ in $f(t,x)$.}
First, we note that by H\"older's inequality and \eqref{Kry00},
\begin{align*}
&\quad\mE\left(\int_0^{{2h}}f(Z^x_t)-f(Z^x_{\pi_h(t)})\dif t\right)^2\le2h\mE\int_0^{{2h}}\Big|f(Z^x_t)-f(Z^x_{\pi_h(t)})\Big|^2\dif t\lesssim h\nor f\nor_p.
\end{align*}
Hence, without loss of generality, we may assume $s>2h$.
The symmetry implies
\begin{align*}
&
\quad\mE\Big|\int_{s}^t(f(Z^x_r)-f(Z^x_{\pi_h(r)}))\dif r\Big|^2\\
=&2\int_{s}^t\int_{{r_1}}^t \mE\[(f(Z^x_{r_1})-f(Z^x_{\pi_h(r_1)}))(f(Z^x_{r_2})-f(Z^x_{\pi_h(r_2)}))\]\dif r_2\dif r_1\\
=&2\int_{s}^t\int_{{r_1}}^{r_1+h} \mE\[(f(Z^x_{r_1})-f(Z^x_{\pi_h(r_1)}))(f(Z^x_{r_2})-f(Z^x_{\pi_h(r_2)}))\]\dif r_2\dif r_1\\
&+2\int_{s}^t\int_{{r_1+h}}^t \mE\[(f(Z^x_{r_1})-f(Z^x_{\pi_h(r_1)}))(f(Z^x_{r_2})-f(Z^x_{\pi_h(r_2)}))\]\dif r_2\dif r_1\\
:=&2\sI_1+2\sI_2.
\end{align*}
By H\"older's inequality and \eqref{BB00}, one sees that
\begin{align*}
&
\quad\mE\[(f(Z^x_{r_1})-f(Z^x_{\pi_h(r_1)}))(f(Z^x_{r_2})-f(Z^x_{\pi_h(r_2)}))\]\\
&
\le \Big(\mE\Big|f(Z^x_{r_1})-f(Z^x_{\pi_h(r_1)})\Big|^2\Big)^{1/2}\Big(\mE\Big|f(Z^x_{r_2})-f(Z^x_{\pi_h(r_2)})\Big|^2\Big)^{1/2}\\
&
\lesssim (r_1^{-\frac{d}{2p}}+{\pi_h(r_1)}^{-\frac{d}{2p}})(r_2^{-\frac{d}{2p}}+{\pi_h(r_2)}^{-\frac{d}{2p}})\nor f\nor_p^2,
\end{align*}
which implies that
\begin{align*}
\sI_1&\lesssim\nor f\nor_p^2 \int_{s}^t\int_{{r_1}}^{r_1+h}(r_1^{-\frac{d}{2p}}+{\pi_h(r_1)}^{-\frac{d}{2p}})(r_2^{-\frac{d}{2p}}+{\pi_h(r_2)}^{-\frac{d}{2p}})\dif r_2\dif r_1\\
&\lesssim \nor f\nor_p^2(s-h)^{-\frac{d}{2p}} h \int_{s}^t(r_1^{-\frac{d}{2p}}+{\pi_h(r_1)}^{-\frac{d}{2p}})\dif r_1\lesssim hs^{-\frac{d}{2p}} (t-s)^{1-\frac{d}{2p}}\nor f\nor_p^2.
\end{align*}

For $\sI_2$, we use conditional expectation and the Markov property \eqref{ChH00}. For simplicity, let
\begin{align*}
\mE^{\sG}[\cdot]:=\mE[\cdot|\sG].
\end{align*}
Then, noting that $r_1\le r_2-h<r_2$, by the Markov property \eqref{ChH00}, we have
\begin{align*}
\sI_2&=\int_{s}^t\int_{{r_1+h}}^t \mE\Big((f(Z^x_{r_1})-f(Z^x_{\pi_h(r_1)}))\mE^{\sF_{r_1}}[f(Z^x_{r_2})-f(Z^x_{\pi_h(r_2)})]\Big)\dif r_2\dif r_1\\
&=\int_{s}^t\int_{{r_1+h}}^t \mE\Big(\[f(Z^x_{r_1})-f(Z^x_{\pi_h(r_1)})\]\[P^\sigma_{r_2-r_1}f(Z^x_{r_1})-P^\sigma_{\pi_h(r_2)-r_1}f(Z^x_{r_1})\]\Big)\dif r_2\dif r_1.
\end{align*}
Setting $G:=P^\sigma_{r_2-r_1}f-P^\sigma_{\pi_h(r_2)-r_1}f$, in view of H\"older's inequality and \eqref{BB00}, one sees that
\begin{align*}
\sI_2&\le \int_{s}^t\int_{{r_1+h}}^t \Big(\mE\Big|f(Z^x_{r_1})-f(Z^x_{\pi_h(r_1)})\Big|^2\Big)^{1/2}\Big(\mE |G(Z^x_{r_1})|^2\Big)^{1/2}\dif r_2\dif r_1\\
&\lesssim \nor |f|^2\nor_{p/2}^{1/2}\int_{s}^t\int_{{r_1+h}}^t ({r_1}^{-\frac{d}{2p}}+{\pi_h(r_1)}^{-\frac{d}{2p}}){r_1}^{-\frac{d}{2p}}\nor |G|^2\nor_{p/2}^{1/2}\dif r_2\dif r_1\\
&\lesssim \nor f\nor_{p}\int_{s}^t\int_{{r_1+h}}^t {(r_1-h)}^{-\frac{d}{p}}\nor G\nor_{p}\dif r_2\dif r_1.
\end{align*}
We note that by \eqref{MM03},
\begin{align*}
\nor G\nor_{p}&=\nor P^\sigma_{r_2-r_1}f-P^\sigma_{\pi_h(r_2)-r_1}f\nor_p\\
&\lesssim \Big([(r_2-\pi_h(r_2))(\pi_h(r_2)-r_1)^{-1}]\wedge 1\Big)\nor f\nor_p\\
&\lesssim \Big[(h(r_2-r_1-h)^{-1})\wedge 1\Big]\nor f\nor_p,
\end{align*}
By a change of variables we have
\begin{align*}
\sI_2
&
\lesssim \nor f\nor_{p}^2\int_{s}^t\int_{{r_1+h}}^t {(r_1-h)}^{-\frac{d}{p}}\Big[(h(r_2-r_1-h)^{-1})\wedge 1\Big]\dif r_2\dif r_1\\
&
\lesssim\nor f\nor_{p}^2\int_{s-h}^{t-h}\int_{0}^{t} {(r_1)}^{-\frac{d}{p}}\Big[(h (r_2)^{-1})\wedge 1\Big]\dif r_2\dif r_1\\
&
\lesssim h(t-s)^{1-\frac{d}{p}}\nor f\nor_{p}^2\int_{0}^{t/h}\Big[(r_2)^{-1}\wedge 1\Big]\dif r_2\lesssim h\ln h^{-1}(t-s)^{1-\frac{d}{p}} \nor f\nor_{p}^2,
\end{align*}
and we obtain \eqref{MM00}.

{\bf (Step 2)} In this step, we use the method in \cite[Lemma 3.4]{LL21} to show \eqref{BB03} with $\tau=T$. Let $\alpha:=\frac{d}{4p}$, $\|\cdot\|:=\(\mE|\cdot|^2\)^{\frac12}$,
\begin{align*}
F_t:=\int_{0}^t(f(r,Z^x_r)-f(r,Z^x_{\pi_h(r)}))\dif r\quad \text{and} \quad \cH:=\(C h\ln h^{-1}\)^{1/2}\nor f\nor_{\widetilde{\mL}^p(T)},
\end{align*}
where $C$ is the constant in \eqref{MM00}.
By \eqref{MM00}, we have
\begin{align}\label{MM01}
\|F_t-F_s\|\le \cH (s^{-\alpha}(t-s)^{1/2-\alpha}+(t-s)^{1/2-2\alpha}).
\end{align}
For any $n\in\mN$, set
\begin{align*}
t_n:=2^{-n}T.
\end{align*}
Then, by \eqref{MM01}
\begin{align*}
\|F_T\|\le\sum_{n=0}^\infty\|F_{t_n}-F_{t_{n+1}}\|\le\cH\sum_{n=0}^\infty (t_{n+1}^{-\alpha}(t_{n}-t_{n+1})^{1/2-\alpha}+(t_n-t_{n+1})^{1/2-2\alpha}).
\end{align*}
Noting that $t_{n+1}=T2^{-n-1}$ and $t_n-t_{n+1}=T2^{-n-1}$, we have
\begin{align*}
\|F_T\|
&
\le\cH\sum_{n=0}^\infty (T^{-\alpha}2^{\alpha(n+1)}(t_{n}-t_{n+1})^{1/2-\alpha}
+(t_n-t_{n+1})^{1/2-2\alpha})\\
&
\le 2\cH T^{1/2-2\alpha}\sum_{n=0}^\infty 2^{-(1/2-2\alpha)(n+1)}\le 2\cH T^{1/2-2\alpha}
\end{align*}
and obtain \eqref{BB03} for $\tau=T$.

{\bf (Step 3)}
For any $a,b\in[0,T]$, define
\begin{align*}
f_{a,b}(t,x):=\1_{t\in[a,b]}f(t,x).
\end{align*}
 Then, by Step 1, one sees that
 \begin{align*}
\mE\left(\int_{a}^b(f(t,Z^x_t)-f(t,Z^x_{\pi_h(t)}))\dif t\right)^2
&
=\mE\(\int_{0}^T(f_{a,b}(t,Z^x_t)-f_{a,b}(t,Z^x_{\pi_h(t)}))\dif t\)^2\\
&
\le C h\ln h^{-1}\nor f_{a,b}\nor_{\widetilde{\mL}^p(T)}^2.
\end{align*}
Noting that $\nor f_{a,b}\nor_{\widetilde{\mL}^p(T)}\le \nor f\nor_{\widetilde{\mL}^p(T)}$, we get \eqref{DD000}.

{\bf (Step 4)}
Without loss of generality, we assume that $h<T/2$ and that $\tau$ only takes finite values $a_1,a_2,...,a_n\in[0,T]$.
Otherwise, for any stopping time, we choose $\tau_n, n\in\N$, which only take finite values to approximate $\tau$
and \eqref{BB03} follows from \eqref{Kry00} and the dominated convergence theorem. First, we have
\begin{align*}
\mE\left|\int_{\tau}^T(f(t,Z^x_t)-f(t,Z^x_{\pi_h(t)}))\dif t\right|^2
&
\le 2\mE\left|\int_{\tau}^{(\tau+2h)\wedge T}(f(t,Z^x_t)-f(t,Z^x_{\pi_h(t)}))\dif t\right|^2\\
&\quad
+2\mE\left|\int_{(\tau+2h)\wedge T}^T(f(t,Z^x_t)-f(t,Z^x_{\pi_h(t)}))\dif t\right|^2.
\end{align*}
We note that by H\"older's inequality and \eqref{Kry00} for $p/2>d/2$,
\begin{align*}
&
\quad\mE\Big|\int_{\tau}^{(\tau+2h)\wedge T}(f(t,Z^x_t)-f(t,Z^x_{\pi_h(t)}))\dif t\Big|^2\\
&
\le \mE\Big|\int_{0}^{T}\1_{t\in[\tau,\tau+2h]}\dif t\int_{0}^{T}|f(t,Z^x_t)-f(t,Z^x_{\pi_h(t)})|^2\dif t\Big|\\
&
\lesssim h \nor |f|^2\nor_{\widetilde{\mL}^{p/2}(T)}\lesssim h \nor f\nor_{\widetilde{\mL}^{p}(T)}^2.
\end{align*}
Now we estimate the second term. In fact,
\begin{align*}
&\quad
\mE\left(\int_{(\tau+2h)\wedge T}^T(f(t,Z^x_t)-f(t,Z^x_{\pi_h(t)}))\dif t\right)^2\\
&
=\sum_{i=1}^n\mE\left[\1_{\tau=a_i}\left(\int_{(a_i+2h)\wedge T}^T(f(t,Z^x_t)-f(t,Z^x_{\pi_h(t)}))\dif t\right)^2\right].
\end{align*}
We note that $\1_{\tau=a_i}\in\sF_{a_i}\subset\sF_{([a_i/h]+1)h}$, without loss of generality, assuming $a_i+2h<T$, one sees that
\begin{align*}
&\quad
\mE\left[\1_{\tau=a_i}\left(\int_{a_i+2h}^T(f(t,Z^x_t)-f(t,Z^x_{\pi_h(t)}))\dif t\right)^2\right]\\
&
=\mE\Big(\1_{\tau=a_i}\mE\Big[\(\int_{a_i+2h}^T(f(t,Z^x_t)-f(t,Z^x_{\pi_h(t)}))\dif t\)^2\Big|\sF_{a_i}\Big]\Big)\\
&
:=\mE(\1_{\tau=a_i}\sA_i),
\end{align*}
where
\begin{align*}
\sA_i=\mE\left[\mE\left[\left(\int_{a_i+2h}^T(f(t,Z^x_t)-f(t,Z^x_{\pi_h(t)}))\dif t\right)^2\Big|\sF_{([a_i/h]+1)h}\right]\Big|\sF_{a_i}\right].
\end{align*}
Moreover, by the Markov property \eqref{ChH00}, we have
\begin{align*}
&\quad
\mE\Big[\(\int_{a_i+2h}^T(f(t,Z^x_t)-f(t,Z^x_{\pi_h(t)}))\dif t\)^2\Big|\sF_{([a_i/h]+1)h}\Big]\\
&
=\mE\Big(\int_{a_i+2h}^T(f(t,Z^y_{t-([a_i/h]+1)h})-f(t,Z^y_{\pi_h(t)-([a_i/h]+1)h}))\dif t\Big)^2\Big|_{y=Z^x_{_{([a_i/h]+1)h}}}\\
&
=\mE\Big(
\int_{a_i+2h-([a_i/h]+1)h}^{T-([a_i/h]+1)h}
(f(t+([a_i/h]+1)h,Z^y_{t})-f(t+([a_i/h]+1)h,Z^y_{\pi_h(t)}))\dif t\Big)^2\Big|_{y=Z^x_{_{([a_i/h]+1)h}}}\\
&
\lesssim h\ln h^{-1}\nor f\nor_{\widetilde{\mL}^p(T)}^2
\end{align*}
by \eqref{DD000}. Therefore, we have
\begin{align*}
\mE\Big|\int_{\tau}^T(f(t,Z^x_t)-f(t,Z^x_{\pi_h(t)}))\dif t\Big|^2
&
\lesssim h \nor f\nor_{\widetilde{\mL}^p(T)}^2+\sum_{i=1}^n\mE(\1_{\tau=a_i}\sA_i)\\
&
\lesssim  h \nor f\nor_{\widetilde{\mL}^p(T)}^2+h\ln h^{-1}
\nor f\nor_{\widetilde{\mL}^p(T)}^2\sum_{i=1}^n\mE\1_{\tau=a_i}\\
&
\lesssim h\ln h^{-1}\nor f\nor_{\widetilde{\mL}^p(T)}^2,
\end{align*}
which implies that
\begin{align*}
&\quad
\mE\left|\int_0^{\tau}(f(t,Z^x_t)-f(t,Z^x_{\pi_h(t)}))\dif t\right|^2\\
&
\le 2\mE\left|\int_{0}^T(f(t,Z^x_t)-f(t,Z^x_{\pi_h(t)}))\dif t\right|^2
+2\mE\left|\int_{\tau}^T(f(t,Z^x_t)-f(t,Z^x_{\pi_h(t)}))\dif t\right|^2\\
&
\lesssim h \ln h^{-1}\nor f\nor_{\widetilde{\mL}^p(T)}^2
\end{align*}
and completes the proof.

\end{proof}

\bc
Assume {\bf (H$_\sigma$)} holds. For any $T>0$, $p\in(d\vee 2,\infty)$ and $\delta>0$, there is a constant $C=C(\Xi,p)$ such that for any $x\in\mR^d$ and $f\in\widetilde{\mL}^p(T)$,
\begin{align}\label{DD03}
\mE\left(\sup_{t\in[0,T]}\Big|\int_{0}^t(f(s,Z^x_s)-f(s,Z^x_{\pi_h(s)}))\dif s\Big|^2\right)
\le C h^{1-\delta}\nor f\nor_{\widetilde{\mL}^p(T)}^2.
\end{align}

\ec
\begin{proof}
Let
\begin{align*}
\eta_t:=\Big|\int_{0}^t(f(s,Z^x_s)-f(s,Z^x_{\pi_h(s)}))\dif s\Big|^2
\end{align*}
and $\eta^*_t:=\sup\limits_{s\in[0,t]}\eta_s$.
First of all, it follows from H\"older's inequality and \eqref{Kry00} that for any $\gamma\in(1,p/(d\vee 2))$,
\begin{align}
\mE(\eta^*_T)^{\gamma}
&
\lesssim \mE\int_{0}^T\left(|f|^{2\gamma}(s,Z^x_s)+|f|^{2\gamma}(s,Z^x_{\pi_h(s)})\right)\dif s\no\\
&
\lesssim \nor |f|^{2\gamma}\nor_{\widetilde{\mL}^{p/(2\gamma)}(T)}
\lesssim \nor f\nor_{\widetilde{\mL}^p(T)}^{2\gamma}.\label{DD00}
\end{align}
For any $\lambda>0$, let
\begin{align*}
\tau_\lambda:=\inf\{t\ge0,~\eta_t>\lambda\}.
\end{align*}
We note that $\eta_{\tau_\lambda}=\lambda$, since $\eta$ is a continuous process. Then,
\begin{align*}
\lambda\mP(\eta^*_T>\lambda)\le\lambda\mP(\tau_\lambda\le T)
\le \mE\(\eta_{\tau_\lambda}\1_{\{\tau_\lambda\le T\}}\)\le \mE\eta_{\tau_\lambda\wedge T}.
\end{align*}
In view of \eqref{BB03}, we have
\begin{align*}
\lambda\mP(\eta^*_T>\lambda)\lesssim h\ln h^{-1}\nor f\nor_{\widetilde{\mL}^p(T)}^2.
\end{align*}
Set $\Xi_h:=h\ln h^{-1}\nor f\nor_{\widetilde{\mL}^p(T)}^2$.
Then, for any $\delta\in(0,1)$, by a change of variables,
\begin{align}
\mE\(\eta^*_T\)^{1-\delta}
&
=(1-\delta)\int_0^\infty\lambda^{-\delta}
\mP(\eta^*_T>\lambda)\dif\lambda\no\\
&
\lesssim \int_0^\infty\lambda^{-\delta}
\(1\wedge(\Xi_h\lambda^{-1})\)\dif\lambda\no\\
&
\lesssim \Xi_h^{1-\delta}\int_0^\infty \lambda^{-\delta}(1\wedge \lambda^{-1})\dif\lambda
\lesssim (h\ln h^{-1})^{(1-\delta)}
\|f\|_{\widetilde{\mL}^p(T)}^{2(1-\delta)}.\label{DD00000}
\end{align}
Combining \eqref{DD00} and \eqref{DD00000}, in view of H\"older's inequality, for any $\delta>0$ small enough, we have
\begin{align*}
\mE\eta^*_T
&
=\mE\left[\(\eta^*_T\)^{1-\delta-\sqrt{\delta}}\(\eta^*_T\)^{\delta+\sqrt{\delta}}\right]\\
&
\lesssim \left[\mE\(\eta^*_T\)^{1-\delta}\right]^{\frac{1-\delta-\sqrt{\delta}}{1-\delta}}
\Big[\mE\(\eta^*_T\)^{(\sqrt{\delta}+1)(1-\delta)}\Big]^{\frac{\sqrt{\delta}}{1-\delta}}\\
&
\lesssim h^{1-\delta-\sqrt{\delta}}(\ln h^{-1})^{1-\delta-\sqrt{\delta}}\nor f\nor_{\widetilde{\mL}^p(T)}^{2}
\end{align*}
and complete the proof.

\end{proof}

\subsection{Time discretization for SDEs with $\widetilde{\mL}^p$ drift}\label{Sec3.1}
Now, let us extend estimate \eqref{DD03} from $Z^x_{t-s}$ to the solution $X^x_{s,t}$ of
SDE \eqref{SZZ001} both in the sense of paths and distributions (see \eqref{BB05} and \eqref{EE00} below).
Recall that
\begin{align}\label{II03}
X^x_{s,t}=x+\int_s^t B(r,X^x_{s,r})\dif r+\int_s^t\sigma(X^x_{s,r})\dif W_r.
\end{align}
Let $\mu_{s,t}^x$ denote the distribution of $X^x_{s,t}$. For simplicity, we also set
\begin{align*}
X^x_t:=X^x_{0,t}\quad \text{and}\quad\mu_t^x:=\mu_{0,t}^x.
\end{align*}
The following estimates follow from Girsanov's transform and estimates for $Z^x_{t}$ (see \cite[Lemma 4.1]{Xi-Xi-Zh-Zh} for (ii) and (iii)).
\bl\label{Lem34}
Assume \eqref{NBB03} with $p_0>d\vee 2$.\\
(i) For any $T>0$ and $p\in(1,\infty)$, there is a constant $C=C(\Xi,p)$
such that for any $x\in\mR^d$, $0\le s<t\le T$ and nonnegative $f\in \widetilde{L}^p$,
\begin{align}\label{BB06}
\mE f(X^x_{t})\le C t^{-d/(2p)}\nor f\nor_{p}.
\end{align}
(ii)For any $T>0$ and $2/q+d/p<2$, there is a constant $C=C(\Xi,p,q)$ such that for any $x\in\mR^d$, $0\le s<t\le T$ and nonnegative $f\in \widetilde{\mL}^p_q(T)$,
\begin{align}\label{Kry02}
\mE\int_s^t f(r,X^x_{r})\dif r+\mE\int_{s}^t f(r,X^x_{\pi_h(r)})\dif r\le C(t-s)^{1-\frac{1}{q}-\frac{d}{2p}}\nor f\nor_{\widetilde{\mL}^p_q(T)}.
\end{align}
(iii)For any $T>0$, $d/p+2/q<2$ and $f\in \widetilde{\mL}^p_q(T)$,
\begin{align}\label{Kh02}
\sup_{x\in\mR^d}\mE\exp\left(\int_0^T f(t,X^x_t)\dif t\right)<\infty.
\end{align}
(iv)For any $T>0$, $\delta>0$ and $p>d\vee2$, there is a constant $C=C(\Xi,p)$ such that for any $x\in\mR^d$, $h\in(0,1)$ and $f\in \widetilde{\mL}^p(T)$,
\begin{align}\label{BB05}
\mE\left(\sup_{t\in[0,T]}\Big|\int_{0}^t(f(s,X^x_{s})-f(s,X^x_{\pi_h(s)}))\dif s\Big|^2\right)
\le C h^{1-\delta}\nor f\nor_{\widetilde{\mL}^p(T)}^2.
\end{align}
\el
\begin{proof}
Let $\widetilde{Z}^x$ be a solution on a probability space $(\widetilde{\Omega},\widetilde{\sF},(\widetilde{\sF}_t)_{t\ge0},\widetilde{\mP})$ to the following SDE
\begin{align*}
\widetilde{Z}^x_t=x+\int_0^t\sigma(\widetilde{Z}^x_r)\dif \widetilde{W}_r,
\end{align*}
where $\widetilde{W}_t$ is a standard $d$-dimensional Brownian motion.
Since $p_0>2\vee d$, $B^2\in \widetilde{\mL}^{p_0/2}(T)$. By \eqref{Kh00}, one sees that for any $\gamma>0$
\begin{align*}
%\sup_{\eps\ge0}
\sup_x\widetilde{\mE}_{\widetilde{\mP}}\exp\left(\gamma\int_0^T |\sigma^{-1}B(t,\widetilde{Z}^x_t)|^2\dif t\right)<\infty.
\end{align*}
Hence, by Novikov's criterion,
\begin{align*}
\mZ_T:= \exp\left(-\int_0^T \sigma^{-1}B(t,\widetilde{Z}^x_t)\dif \widetilde{W}_t-\frac{1}{2}\int_0^T|\sigma^{-1}B(t,\widetilde{Z}^x_t)|^2\dif t\right)
\end{align*}
is integrable and for any $q>0$
\begin{align}\label{BB061}
%\sup_{\eps\ge 0}
\widetilde{\mE}_{\widetilde{\mP}}|\mZ_T|^q\le C(\Xi,q).
\end{align}
Define $\dif\mQ:=\mZ_T\dif \widetilde{\mP}$. Then, by Girsanov's theorem,
\begin{align*}
\text{$\bar{W}_t:=\widetilde{W}_t-\int_0^t \sigma^{-1}B(s,\widetilde{Z}^x_s)\dif s$ is a $\mQ$-martingale}.
\end{align*}
In other words,
\begin{align*}
\widetilde{Z}^x_t=x+\int_0^t B(s,\widetilde{Z}_s)\dif s+\int_0^t\sigma(\widetilde{Z}_s)\dif \bar{W}_s,\ \ \mQ-a.e.
\end{align*}
Therefore, by the uniqueness of \eqref{II03}, we have
\begin{align}\label{Tra001}
\mQ\circ(\widetilde{Z}^x_\cdot)^{-1}=\mP\circ(X^x_\cdot)^{-1}.
\end{align}
Now, we show (i)-(iv) one by one.

(i): In view of \eqref{Tra001}, one sees that
\begin{align*}
\mE f(X_t^x)=\widetilde{\mE}_{\mQ}f(\widetilde{Z}^x_t)=\widetilde{\mE}_{\widetilde{\mP}}\[\mZ_Tf(\widetilde{Z}_t)\].
\end{align*}
By H\"older's inequality, \eqref{BB061} and \eqref{BB00}, we have for any $r\in(1,p)$ and $1/r'+1/r=1$,
\begin{align*}
\mE f(X^x_t)\le \left(\widetilde{\mE}_{\widetilde{\mP}}|\mZ_T|^{r'}\right)^{1/r'}\left(\widetilde{\mE}_{\widetilde{\mP}} |f(\widetilde{Z}_t)|^r\right)^{1/r}
\lesssim \left(t^{-dr/(2p)}\nor |f|^r\nor_{p/r}\right)^{1/r}
\lesssim t^{-d/(2p)}\nor f\nor_p,
\end{align*}
which is \eqref{BB06}.

(ii): Similarly to (i), by H\"older's inequality and \eqref{Kry00}, we have
\begin{align*}
\mE \int_s^tf(u,X^x_u)\dif u
&
\le \left(\widetilde{\mE}_{\widetilde{\mP}}|\mZ_T|^{r'}\right)^{1/r'}\left(\widetilde{\mE}_{\widetilde{\mP}} \left[\int_s^tf(u,\widetilde{Z}^x_u)\dif u\right]^r\right)^{1/r}\\
&
\lesssim\left(\widetilde{\mE}_{\widetilde{\mP}}\left( (t-s)^{r-1}\int_s^t|f(u,\widetilde{Z}^x_u)|^r\dif u\right)\right)^{1/r}\\
&
\lesssim (t-s)^{1-1/q-d/(2p)}\left(\nor |f|^r\nor_{\widetilde{\mL}^{p/r}_{q/r}}\right)^{1/r}\lesssim (t-s)^{1-1/q-d/(2p)}\nor f\nor_{\widetilde{\mL}^{p}_{q}}.
\end{align*}
The term $\mE \int_s^tf(u,X^x_{\pi_h(u)})\dif u$ can be estimated the same way.

(iii):
For \eqref{Kh02}, it again follows from H\"older's inequality that
\begin{align*}
\mE\exp\left(\int_0^T f(t,X^x_t)\dif t\right)
&
=\widetilde{\mE}_{\widetilde{\mP}}\left[\mZ_T\exp\(\int_0^T f(t,\widetilde{Z}^x_t)\dif t\)\right]\\
&
\le \(\widetilde{\mE}_{\widetilde{\mP}}|\mZ_T|^{r'}\)^{1/r'}\left(\widetilde{\mE}_{\widetilde{\mP}}
\exp\(r\int_0^T f(t,\widetilde{Z}^x_t)\dif t\)\right)^{1/r}<\infty
\end{align*}
by \eqref{BB061} and \eqref{Kh00}.

(iv): Let
\begin{align*}
A^f(h,X):=\sup_{t\in[0,T]}\Big|\int_{0}^t
(f(s,X^x_s)-f(s,X^x_{\pi_h(s)}))\dif u\Big|^2
\end{align*}
and
\begin{align*}
A^f(h,\widetilde{Z}):=\sup_{t\in[0,T]}\Big|\int_{{0}}^t
(f(s,\widetilde{Z}^x_s)-f(s,\widetilde{Z}^x_{\pi_h(s)}))\dif u\Big|^2.
\end{align*}
For any $\delta\in(0,1)$, we note that
\begin{align*}
\mE A^f(h,X)
&
=\widetilde{\mE}_{\widetilde{\mP}}\(\mZ_TA^f(h,\widetilde{Z})\)\\
&
=\widetilde{\mE}_{\widetilde{\mP}}\(\mZ_T|A^f(h,\widetilde{Z})|^\delta|A^f(h,\widetilde{Z})|^{1-\delta}\).
\end{align*}
Based on H\"older's inequality, \eqref{BB061}, \eqref{Kry02} and \eqref{DD03}, for $1/r'+1/r=1$ with some $r\in(1,p/2)$, we have
\begin{align*}
\mE A^f(h,X)
&
\le \[\widetilde{\mE}_{\widetilde{\mP}}\(|\mZ_T|^{1/\delta}A^f(h,\widetilde{Z})\)\]^{\delta}
\[\widetilde{\mE}_{\widetilde{\mP}}A^f(h,\widetilde{Z})\]^{1-\delta}\\
&
\le\(\widetilde{\mE}_{\widetilde{\mP}}|\mZ_T|^{r'/\delta}\)^{\delta/r'}
\(\widetilde{\mE}_{\widetilde{\mP}}|A^f(h,\widetilde{Z})|^{r}\)^{\delta/r}
\(\widetilde{\mE}_{\widetilde{\mP}}A^f(h,\widetilde{Z})\)^{1-\delta}\\
&
\lesssim \nor |f|^{2r}\nor_{\widetilde{\mL}^{p/(2r)}(T)}^{\delta/r} h^{(1-\delta_0)}
\nor f\nor_{\widetilde{\mL}^{p}(T)}^{2(1-\delta)}\\
&
\lesssim h^{1-\delta_0}\nor f\nor_{\widetilde{\mL}^{p}(T)}^2,
\end{align*}
where $\delta_0=\delta_0(\delta)\to0$ as $\delta\to0$,
and complete the proof.

\end{proof}
Next, we want to prove an estimate for $\sup_x\|\mu_s^x-\mu_t^x\|_{var}$.
To this end, we will use the relation between the PDE and the SDE.
For any $T>0$, consider the following backward PDE:
\begin{align}\label{BPDE0}
\p_t u^T+B\cdot\nabla u^T+a_{ij}\p_i\p_j u^T=0,\quad u^T(T)=\varphi,
\end{align}
where $\varphi\in C^\infty_b$. By Proposition \ref{well-PDE},
there exists a unique solution $u^T$ to \eqref{BPDE0} in the sense of Definition \ref{Pre4:Def}. Set
\begin{align*}
P^X_{s,t}f(x):=\mE f(X^x_{s,t}),\quad P^X_t:=P^X_{0,t}.
\end{align*}
By \eqref{BB06}, the domain of $P^X_{s,t}$ includes $\widetilde{L}^p$ for any $p\in(1,\infty]$.
Then we have the following probabilistic representation.
\bp\label{Pro:PR}
Let $T>0$, $\varphi\in C^\infty_b(\mR^d)$, $u^T$ and $X^x_{s,t}$
be the solutions to \eqref{BPDE0} and \eqref{II03} respectively. Then,
\begin{align}\label{PR001}
u^T(s,x)=\mE\varphi(X^x_{s,T})=P^X_{s,T}\varphi(x).
\end{align}

\ep
\begin{proof}
It is straightforward to obtain \eqref{PR001} by applying the generalized It\^o formula \eqref{GIF001}
to the function $t\mapsto u^T(t,X^x_{s,t})$ and taking expectation.
\end{proof}
Apart from the probabilistic representation, by the generalized It\^o formula, we have the following Duhamel formula.
\bl[Duhamel formula]
For any $\varphi\in C^\infty_b(\mR^d)$,
\begin{align}\label{DF002}
P^X_{s,t}\varphi(x)=P^\sigma_{t-s}\varphi(x)+\int_s^t P^X_{s,r}\(B(r)\cdot\nabla P^\sigma_{t-r}\varphi\)(x)\dif r.
\end{align}

\el
\begin{proof}
For any $t\in[0,T]$, let $v^t=v^t(t,x)$ be the solution to the following backward PDE:
\begin{align*}
\p_r v^t+a_{ij}\p_i\p_j v^t=0,\quad v^t(t)=\varphi.
\end{align*}
Based on \eqref{PR001}, one sees that $v^t(r)=P^\sigma_{t-r}\varphi$.
By the generalized It\^o formula \eqref{GIF001}, we have
\begin{align*}
\mE v^t(t,X^x_{s,t})=v^t(s,x)+\mE\int_s^t(\p_rv^t+a_{ij}\p_i\p_j v^t+B\cdot\nabla v^t)(r,X^x_{s,r})\dif r,
\end{align*}
which implies that
\begin{align*}
P^X_{s,t} \varphi(x)&=P^\sigma_{t-s}\varphi+\mE\int_s^t(B(r)\cdot\nabla P^\sigma_{t-r}\varphi)(X^x_{s,r})\dif r\\
&=P^\sigma_{t-s}\varphi+\int_s^tP^X_{s,r}(B(r)\cdot\nabla P^\sigma_{t-r}\varphi)(x)\dif r,
\end{align*}
and we complete the proof.
\end{proof}
Now we can prove the estimate for $\|\mu_t^x-\mu_s^x\|_{var}$.
We note that similar results have been proved in \cite[Lemma 3.8 (1)(ii)]{Zhao2020}
based on heat kernel estimates. It should be mentioned that
the order of time regularity in \cite{Zhao2020}
depends on the H\"older index $\beta$ of $\sigma$ and thus this is not applicable to our case.
\bl\label{NN0000}
Assume \eqref{NBB03}. For any $T>0$ and $q\in[p_0,\infty)$, there is a constant $C=C(\Xi,q)$ such that for any $0<s\le t\le T$ and $\varphi\in C^\infty_b$,
\begin{align}\label{NN000}
\|P^X_t\varphi-P^X_s\varphi\|_\infty\le C\[[(t-s)^{\frac{1}{2}-\frac{d}{2q}} s^{-\frac{1}{2}+\frac{d}{2q}}]\wedge 1\]s^{-\frac{d}{2q}}\nor \varphi\nor_q.
\end{align}
In particular, when $q=\infty$, for any $\delta>0$, there is a constant $C=C(\Xi,\delta)$ such that for all $x\in\mR^d$,
\begin{align}\label{LL02}
\|\mu^x_t-\mu^x_s\|_{var}\le C\[[(t-s)^{1/2-\delta} s^{-1/2}]\wedge 1\].
\end{align}

\el

\begin{proof}
For simplicity, let
\begin{align*}
\alpha:=\frac{1}{2}-\frac{d}{2q}.
\end{align*}
From \eqref{DF002}, one sees that
\begin{align*}
P^X_t\varphi-P^X_s\varphi=&\(P^\sigma_{t}\varphi-P^\sigma_{s}\varphi\)
+\int_s^t P^X_{r}\(B(r)\cdot\nabla P^\sigma_{t-r}\varphi\)\dif r\\
&+\int_0^s P^X_{r}\Big[B(r)\cdot\nabla \(P^\sigma_{t-r}-P^\sigma_{s-r}\)\varphi\Big]\dif r\\
:=&\sI_1+\sI_2+\sI_3.
\end{align*}
Based on \eqref{MM03}, we have
\begin{align*}
\|\sI_1\|_\infty &\lesssim \[[(t-s)s^{-1}]\wedge 1\]s^{-\frac{d}{2q}}\nor \varphi\nor_q\\
&\lesssim \[(t-s)^\alpha s^{-\alpha}\]s^{-\frac{d}{2q}}\nor \varphi\nor_q.
\end{align*}
By \eqref{BB06} and \eqref{BB00}, we have
\begin{align*}
\|\sI_2\|_\infty&\lesssim \int_s^t r^{-d/(2p_0)}\nor B(r)\cdot\nabla P^\sigma_{t-r}\varphi\nor_{p_0}\dif r\\
&\lesssim \int_s^t r^{-d/(2p_0)}\|\nabla P^\sigma_{t-r}\varphi\|_\infty\dif r\\
&\lesssim s^{-d/(2q)}\int_s^t r^{-d/(2p_0)+d/(2q)}(t-r)^{-1/2-d/(2q)}\dif r\nor \varphi\nor_q\\
&:=s^{-d/(2q)}K(t,s)\nor \varphi\nor_q,
\end{align*}
where
\begin{align*}
K(t,s)&\le \left(s^{-\frac{d}{2p_0}+\frac{d}{2q}}\int_s^t (t-r)^{\alpha-1}\dif r\right)\wedge\left((t-s)^{\frac{1}{2}-\frac{d}{2p_0}}\int_0^1r^{-\frac{d}{2p_0}+\frac{d}{2q}}(1-r)^{-\frac{1}{2}-\frac{d}{2q}}\dif r\right)\\
&\lesssim (s^{-\alpha}(t-s)^\alpha)\wedge 1
\end{align*}
since $q\ge p_0>d$.

It remains to estimate $\sI_3$. By \eqref{BB06} and \eqref{MM03}, we have
\begin{align*}
\|\sI_3\|_\infty&\lesssim \int_0^s r^{-d/(2p_0)}\nor B(r)\cdot\nabla \(P^\sigma_{t-r}-P^\sigma_{s-r}\)\varphi\nor_{p_0}\dif r\\
&\lesssim \int_0^s r^{-d/(2p_0)}\|\nabla \(P^\sigma_{t-r}-P^\sigma_{s-r}\)\varphi\|_\infty\dif r\\
&\lesssim \int_0^s r^{-d/(2p_0)}\[[(t-s)^{\frac12}(s-r)^{-\frac12}]\wedge1\](s-r)^{-1/2-d/(2q)}\dif r\nor \varphi\nor_q.
\end{align*}
We note that
\begin{align}
\sJ&:=\int_0^s r^{-d/(2p_0)}\[[(t-s)^{\frac12}(s-r)^{-\frac12}]\wedge1\](s-r)^{-1/2-d/(2q)}\dif r\no\\
&\le\int_0^{s} r^{-d/(2p_0)}(s-r)^{-1/2-d/(2q)}\dif r\lesssim s^{1/2-d/(2p_0)-d/(2q)}\lesssim s^{-d/(2q)},\label{Star0011}
\end{align}
since $p_0>d$. In addition, when $r\in(0,\frac{s}{2}]$, one sees that
\begin{align*}
\[(t-s)^{\frac12}(s-r)^{-\frac12}\]\wedge1\leq(t-s)^{\frac{1}{2}-\frac{d}{2q}}
(s-r)^{-\frac{1}{2}+\frac{d}{2q}}
\leq(t-s)^{\frac{1}{2}-\frac{d}{2q}}\(\frac{s}{2}\)^{-\frac{1}{2}+\frac{d}{2q}}.
\end{align*}
 Hence,
\begin{align*}
\sJ\lesssim
&
s^{-1/2-d/(2q)}\int_0^{s/2} r^{-d/(2p_0)}\[[(t-s)^{\frac12}(s-r)^{-\frac12}]\wedge1\]\dif r\\
&
+ s^{-d/(2p_0)} \int_{s/2}^s \[[(t-s)^{\frac12}(s-r)^{-{\frac12}}]\wedge1\](s-r)^{-1/2-d/(2q)}\dif r\\
\lesssim
&
s^{-1}(t-s)^{1/2-d/(2q)}\int_0^{s/2} r^{-d/(2p_0)}\dif r\\
&
+ s^{-d/(2p_0)} \int_{s/2}^s \[[(t-s)^{\frac12}(s-r)^{-\frac12}]\wedge1\](s-r)^{-1/2-d/(2q)}\dif r.
\end{align*}
By a change of variables, we have
 \begin{align*}
\sJ\lesssim
&
s^{-d/(2p_0)}(t-s)^{1/2-d/(2q)}\\
&
+ s^{-d/(2p_0)}(t-s)^{1-1/2-d/(2q)} \int_{0}^{s/(t-s)} \[(s-r)^{-\frac12}\wedge1\](s-r)^{-1/2-d/(2q)}\dif r\\
\lesssim
&
s^{-d/(2p_0)}(t-s)^{1-1/2-d/(2q)}
\lesssim s^{-\frac{1}{2}}(t-s)^{\frac{1}{2}-\frac{d}{2q}}
\end{align*}
since $q<\infty$ and $p_0>d$, which combined with \eqref{Star0011} implies that
\begin{align*}
\sJ\lesssim \(s^{-\frac12}(t-s)^\alpha\)\wedge s^{-\frac{d}{2q}}=\Big([(t-s)^\alpha s^{-\alpha}]\wedge 1\Big)s^{-\frac{d}{2q}}.
\end{align*}
Thus, we obtain \eqref{NN000}.
In particular, noting that
\begin{align*}
\nor \varphi\nor_{q}\lesssim \|\varphi\|_{\infty},\quad \forall q<\infty,
\end{align*}
by Lusin's theorem and \eqref{NN000}, we have for all $\delta>0$ and $q\in[p_0\vee(d/(2\delta)),\infty)$
\begin{align*}
\|\mu^x_t-\mu^x_s\|_{var}&=\sup_{\varphi\in C^\infty_b}\frac{|P^X_t\varphi(x)-P^X_s\varphi(x)|}{\|\varphi\|_\infty}\lesssim [(t-s)^{\frac{1}{2}-\frac{d}{2q}} s^{-\frac{1}{2}+\frac{d}{2q}}]s^{-\frac{d}{2q}}\\
&\lesssim (t-s)^{\frac{1}{2}-\delta} s^{-\frac{1}{2}}.
\end{align*}
Moreover, it is easy to see that
\begin{align*}
\|\mu^x_t-\mu^x_s\|_{var}\le 2,
\end{align*}
which completes the proof.
\end{proof}
%%%%%%%%%%%%%%%%%%%%%%%%%%
\iffalse
 By \cite[Lemma 2.6]{RZ21} and \eqref{Kry02}, we have the following Krylov's estimate in distribution dependent version.
\bl
Let $T>0$, $p_1,p_2,q\in(1,\infty)$ with $d/p_1<2$. For any $T>0$ and $Y\in \mK^{p_2,q}_{T,\kappa}$, there is a constant $C=C(d,T,\nor B\nor_{\widetilde{\mL}^{p_0}(T)},p_1)$ such that for any positive $f\in \widetilde{\mL}^{p_1,p_2}_q(T)$,
\begin{align}\label{Kry03}
\mE\int_0^T f(r,Y_r,\mu_r)\dif r\le C\kappa\nor f\nor_{\widetilde{\mL}^{p_1,p_2}_{q}(T)}.
\end{align}
\el
\fi
%%%%%%%%%%%%%%%%%%%%%%%%%%%%%%%%
The following lemma is the distribution dependent version of \eqref{BB05}.
\bl
For any $T>0$, $p\in(d\vee 2,\infty)$, assume that $f:\mR_+\times\mR^{d}\times\cP(\mR^d)\to\mR$ such that
\begin{align*}
\kappa_f:=\sup_{t\in[0,T]}\sup_{\mu,\nu}\Big(\nor f(t,\cdot,\mu)\nor_p+\frac{\nor f(t,\cdot,\mu)-f(t,\cdot,\nu)\nor_p}{\|\mu-\nu\|_{var}}\Big)<\infty.
\end{align*}
Then, for any $\delta>0$, there is a constant $C=C(\Xi,p,\delta)$ such that for any $x\in\mR^d$ and $h\in(0,1)$
\begin{align}\label{EE00}
\begin{split}
\mE\Big(\sup_{t\in[0,T]}\Big|\int_{0}^t(f(s,X^x_s,\mu^x_s)&-f(s,X^x_{\pi_h(s)},\mu^x_{\pi_h(s)}))\dif s\Big|^2\Big)\le C(\kappa_f)^2 h^{1-\delta}.
\end{split}
\end{align}

\el
\begin{proof}
For simplicity, we drop the superscript $x$ from $X^x$ and $\mu^x$.
First of all, we note that
\begin{align*}
&~~~\mE\Big(\sup_{t\in[0,T]}\Big|\int_{0}^t(f(s,X_s,\mu_s)-f(s,X_{\pi_h(s)},\mu_{\pi_h(s)}))\dif s\Big|^2\Big)\\
\lesssim&~ \mE\Big(\sup_{t\in[0,T]}\Big|\int_{0}^t(f(s,X_s,\mu_s)-f(s,X_{\pi_h(s)},\mu_s))\dif s\Big|^2\Big)\\
&+\mE\Big(\sup_{t\in[0,T]}\Big|\int_{0}^t(f(s,X_{\pi_h(s)},\mu_s)-f(s,X_{\pi_h(s)},\mu_{\pi_h(s)}))\dif s\Big|^2\Big)\\
:=&\sI^h_1+\sI^h_2.
\end{align*}
By \eqref{BB05}, for any $\delta>0$, we have
\begin{align*}
\sI^h_1\lesssim h^{1-\delta}\sup_{t\in[0,T]}\nor f(t,\cdot,\mu_t)\nor_{p}^2\lesssim  h^{1-\delta}(\kappa_f)^2.
\end{align*}
For $\sI^h_2$, we use the same method as in Step 2 of the proof of Lemma \ref{CruL}. For any $0\le s<t\le T$,
set
\begin{align*}
\sJ_{s,t}^h:=\int_{s+h}^{t+h}|f(r,X_{\pi_h(r)},\mu_r)-f(r,X_{\pi_h(r)},\mu_{\pi_h(r)})|\dif r
\end{align*}
and $\|\cdot\|:=\(\mE|\cdot|^2\)^{1/2}$. Then
\begin{align*}
\sI^h_2&\le \mE\Big|\int_{h}^{T+h}|f(s,X_{\pi_h(s)},\mu_s)-f(s,X_{\pi_h(s)},\mu_{\pi_h(s)})|\dif s\Big|^2=\|\sJ_{0,T}^h\|^2,
\end{align*}
since $\pi_h(r)=r$, if $r<h$.
Based on H\"older's inequality and \eqref{Kry02}, one sees that if $2/q+d/p<1$, then
\begin{align*}
\mE|\sJ_{s,t}^h|^2&\lesssim (t-s)\mE\int_{s+h}^{t+h}
\Big|f(r,X_{\pi_h(r)},\mu_r)-f(r,X_{\pi_h(r)},\mu_{\pi_h(r)})\Big|^2\dif r\\
&
\lesssim (t-s)\Big(\int_{s+h}^{t+h}\nor f(r,\cdot,\mu_r)-f(r,\cdot,\mu_{\pi_h(r)})\nor_{p}^{q}\dif r\Big)^{2/q}\\
&
\lesssim (\kappa_f)^2(t-s)\Big(\int_{s+h}^{t+h}\|\mu_r-\mu_{\pi_h(r)}\|_{var}^{q}\dif r\Big)^{2/q}.
\end{align*}
Then, by \eqref{LL02} and the fact that $q>2$, for any $\delta>0$ we have
\begin{align*}
\|\sJ_{s,t}^h\|&\lesssim \kappa_f(t-s)^{1/2}\Big(\int_{s+h}^{t+h} h^{(\frac12-\delta)q}(\pi_h(r))^{-q/2}\dif r\Big)^{1/q}\\
&\lesssim \kappa_f(t-s)^{1/2}h^{1/2-\delta}\Big(\int_{s}^{t} r^{- q/2}\dif r\Big)^{1/q}\lesssim \kappa_fh^{1/2-\delta}(t-s)^{1/2+1/q}s^{-1/2}.
\end{align*}
Taking $t_n:=2^{-n}T$, we have
\begin{align*}
\|\sJ_{0,T}^h\|
&
\le\sum_{n=0}^\infty\|\sJ_{t_{n+1},t_n}^h\|\lesssim \kappa_fh^{1/2-\delta}
\sum_{n=0}^\infty(t_n-t_{n+1})^{1/2+1/q}{(t_{n+1})}^{-1/2}\\
&
\lesssim \kappa_fh^{1/2-\delta} T^{1/q}\sum_{n=0}^\infty 2^{-\frac{n+1}{q}}\lesssim \kappa_fh^{1/2-\delta},
\end{align*}
which implies $\sI^h_2\lesssim (\kappa_f)^2h^{1-2\delta}$
and we complete the proof.

\end{proof}

Now, we have the following strong fluctuation result, which is crucial in this paper.
\bl\label{LemEE01}
Let $T>0$ and $g:\mR_+\times\mR^d\to\mR$ be a bounded function satisfying
\begin{align}\label{Cru00}
c_g:=\sup_{t\ne s\in[0,T]\atop x\in\mR^d}\Big(|g(t,x)|+\frac{|g(t,x)-g(s,x)|}{|t-s|^\alpha}\Big)<\infty
\end{align}
for some $\alpha>0$.
Assume \eqref{NBB03},  {\bf (H$_b^1$)} and {\bf (H$_b^2$)} hold. Then for any $\delta>0$, there is a constant $C=C(\Xi,\kappa_0,\delta,c_g)$ such that for any $x\in\mR^d$ and $\eps>0$,
\begin{align}\label{EE01}
\begin{split}
&\mE\left(\sup_{t\in[0,T]}\Big|\int_0^tg(s,X^x_s)(b(\frac{s}{\eps},X^x_s,\mu^x_s)-\bar{b}(X^x_s,\mu^x_s))\dif s\Big|^2\right)\\
&\le C\inf_{h>0}\left(h^{1-\delta}
+h^{2\alpha}+\left(\omega\left(\frac{h}{\eps}\right)\right)^2\right).
\end{split}
\end{align}

\el

\begin{proof}
For simplicity, we drop the superscript $x$ from $X^x$ and $\mu^x$.
Set $b_\eps(t):=b(t/\eps)$ and
\begin{align*}
\X_\cdot:=(X_\cdot,\mu_\cdot).
\end{align*}
For any $h\in(0,1)$,
\begin{align*}
&\mE\left(\sup_{t\in[0,T]}\Big|\int_0^tg(s,X^x_s)(b_\eps(s,X^x_s,\mu^x_s)-\bar{b}(X^x_s,\mu^x_s))\dif s\Big|^2\right)\\
&
\lesssim \mE\left(\sup_{t\in[0,T]}\Big|\int_0^t((gb_\eps)(s,\X_s)-(gb_\eps)(s,\X_{\pi_h(s)}))\dif s\Big|^2\right)\\
&\quad
+\mE\left(\sup_{t\in[0,T]}\Big|\int_0^tg(s,X_{\pi_h(s)})(b_\eps(s,\X_{\pi_h(s)})-\bar{b}(\X_{\pi_h(s)}))\dif s\Big|^2\right)\\
&\quad
+\mE\left(\sup_{t\in[0,T]}\Big|\int_0^t((g\bar{b})(s,\X_s)-(g\bar{b})(s,\X_{\pi_h(s)}))\dif s\Big|^2\right)\\
&
:=\sI^{\eps,h}_1+\sI^{\eps,h}_2+\sI^{\eps,h}_3.
\end{align*}
By {\bf (H$^1_b$)} and \eqref{EE00}, for any $\delta>0$, we have
\begin{align*}
\sI^{\eps,h}_1+\sI^{\eps,h}_3
\lesssim \|g\|_\infty^2\kappa_0^2h^{1-\delta}.
\end{align*}

For $\sI^{\eps,h}_2$, we note that by \eqref{Cru00} and \eqref{Kry02} with $p_0>(d/2)\vee1$,
\begin{align*}
\sI^{\eps,h}_2&\lesssim \mE\left(\sup_{t\in[0,T]}\Big|\int_0^tg(\pi_h(s),X_{\pi_h(s)})(b_\eps(s,\X_{\pi_h(s)}\)-\bar{b}(\X_{\pi_h(s)}))\dif s\Big|^2\right)\\
&+\mE\left(\sup_{t\in[0,T]}\int_0^t|s-\pi_h(s)|^{2\alpha}\Big(|b_\eps(s,\X_{\pi_h(s)})|^2+|\bar{b}(\X_{\pi_h(s)}))|^2\Big)\dif s\right)\\
&\lesssim \sI^{\eps,h}_{21}+h^{2\alpha},
\end{align*}
where
\begin{align*}
\sI^{\eps,h}_{21}:=\mE\left(\sup_{t\in[0,T]}\Big|\int_0^tg(\pi_h(s),X_{\pi_h(s)})(b_\eps(s,\X_{\pi_h(s)})-\bar{b}(\X_{\pi_h(s)}))\dif s\Big|^2\right).
\end{align*}
It suffices to show
\begin{align*}
\sI^{\eps,h}_{21}\lesssim h+\left(\omega\left(\frac{h}{\eps}\right)\right)^2.
\end{align*}
Letting $M(t)=[t/h]$ and noting that $\pi_h(s)=s$ for $s\in[0,h)$, we have
\begin{align*}
\sI^{\eps,h}_{21}
\lesssim
&
\mE\left(\sup_{t\in[0,h)}\Big|\int_0^tg(s,X_s)(b_\eps(s,\X_{s})-\bar{b}(\X_{s}))\dif s\Big|^2\right)\\
&
+\mE\left(\sup_{t\in[h,T]}\Big|\int_0^hg(s,X_s)(b_\eps(s,\X_{s})-\bar{b}(\X_{s}))\dif s\Big|^2\right)\\
&
+\mE\left(\sup_{t\in[h,T]}\Big|\int_{M(t)h}^tg(\pi_h(s),X_{\pi_h(s)})(b_\eps(s,\X_{\pi_h(s)})
-\bar{b}(\X_{\pi_h(s)})\dif s\Big|^2\right)\\
&
+\mE\left(\sup_{t\in[h,T]}\Big|\int_h^{M(t)h}g(\pi_h(s),X_{\pi_h(s)})(b_\eps(s,\X_{\pi_h(s)})
-\bar{b}(\X_{\pi_h(s)}))\dif s\Big|^2\right)\\
:=&
\sI^{\eps,h}_{211}+\sI^{\eps,h}_{212}+\sI^{\eps,h}_{213}+\sI^{\eps,h}_{214}.
\end{align*}
It follows from H\"older's inequality
%and \eqref{Kry02},
 that
\begin{align*}
\sum_{i=1}^3\sI^{\eps,h}_{21i}
&
\lesssim h \|g\|_\infty^2\left(\mE\left[\int_0^T|b_\eps(s,\X_{s})|^2+|\bar{b}(\X_{s})|^2\dif s\right]
+\mE\left[\int_0^T|b_\eps(s,\X_{\pi_h(s)})|^2+|\bar{b}(\X_{\pi_h(s)})|^2\dif s\right]\right)\\
&
\lesssim h\|g\|_\infty^2\kappa_0^2,
\end{align*}
because of \eqref{Kry02}.
Thus, we only need to prove
\begin{align*}
\sI^{\eps,h}_{124}\lesssim \(\omega\left(\frac{h}{\eps}\right)\)^2.
\end{align*}
By the definition of $\pi_h$, it is easy to see that
\begin{align*}
\sI^{\eps,h}_{214}\le\mE\Big(\sup_{2\le m\le M(T)}\Big|\sum_{k=1}^{m-1}g(kh,X_{kh})\int_{kh}^{(k+1)h}(b_\eps(s,\X_{kh})-\bar{b}(\X_{kh}))\dif s\Big|^2\Big).
\end{align*}

Based on the fact that $|\sum_{k=1}^{m-1}a_k|^2\le (m-1)\sum_{k=1}^{m-1}|a_k|^2$, one sees that
\begin{align*}
\sI^{\eps,h}_{214}
&
\le M(T)c_g^2\sum_{k=1}^{M(T)-1}\mE\Big|\int_{kh}^{(k+1)h}
(b_\eps(s,\X_{kh})-\bar{b}(\X_{kh}))\dif s\Big|^2.
\end{align*}
By a change of variables and \eqref{in:00}, we have
\begin{align*}
\sI^{\eps,h}_{214}
&
\lesssim
M(T)\sum_{k=1}^{M(T)-1}\mE\left|\eps\int_{kh/\eps}^{(k+1)h/\eps}\(b(s,\X_{kh})-\bar{b}(\X_{kh})\)\dif s\right|^2\\
&
\lesssim [\frac{T}{h}]h^2\sum_{k=1}^{M(T)-1}\mE\Big|\frac{\eps}{h}\int_{kh/\eps}^{(k+1)h/\eps}
(b(s,\X_{kh})-\bar{b}(\X_{kh}))\dif s\Big|^2\\
&
\lesssim h \left(\omega\left(\frac{h}{\eps}\right)\right)^2
\sum_{k=1}^{M(T)-1}\mE |H(\X_{kh})|^2.
\end{align*}
We note that
\begin{align*}
h\sum_{k=1}^{M(T)-1}\mE |H(\X_{kh})|^2=\mE\int_{h}^{M(T)h}|H(\X_{\pi_h(s)})|^2\dif s
\le\mE\int_{h}^{T}|H(\X_{\pi_h(s)})|^2\dif s.
\end{align*}
Again by \eqref{Kry02}, we have
\begin{align*}
\sI^{\eps,h}_{214}\lesssim \left(\omega\left(\frac{h}{\eps}\right)\right)^2
\sup_{\mu}\nor H(\cdot,\mu)\nor_{p_0}^2
\end{align*}
and complete the proof.

\end{proof}

\subsection{Time regularity for solutions to the parabolic equation}\label{SubS32}
In this section, we establish the time regularity for the solution of PDE \eqref{NBB00}.
First, we give the following probabilistic representation of the solution to PDE if $B\equiv0$.
\bl
Let $B\equiv0$ and $u$ be a solution to PDE \eqref{NBB00}. Then,
\begin{align}\label{CCA01}
u(t)=\int_0^t e^{-\lambda (t-s)}P^\sigma_{t-s}f(s)\dif s
+e^{-\lambda t}P^\sigma_t\varphi.
\end{align}
\el
\begin{proof}
Applying the generalized It\^o formula \eqref{GIF001} to
$$
s\to e^{-\lambda s}u(t-s,Z_s^x),
$$
we get
\begin{align*}
e^{-\lambda t}u(0,Z_t^x)-u(t,x)=&\int_0^t e^{-\lambda s}(-\p_su+a_{ij}\p_i\p_j u-\lambda u)(t-s,Z_s^x)\dif s\\
&+\int_0^t e^{-\lambda s}\nabla u(t-s,Z_s^x)\dif W_s.
\end{align*}
Taking expectation of both sides, we obtain that
\begin{align*}
e^{-\lambda t}P^\sigma_t\varphi-u(t)=-\int_0^t e^{-\lambda s}P^\sigma_sf(t-s)\dif s,
\end{align*}
which is \eqref{CCA01} by a change of variable and this completes the proof.
\end{proof}

Using the above Lemma, we have the following time H\"older regularity of $\nabla u$.
\bl
Assume $\varphi\equiv0$. Under condition \eqref{NBB03} with some $p_0\in(d,\infty)$, for any $\lambda\ge0$,
there is a constant $C=C(\Xi,p,\lambda)$ such that for all $t,s\in[0,T]$ and $f\in\widetilde{\mL}^{p_0}(T)$
the solution $u$ to \eqref{NBB00} in the sense of Definition \ref{Pre4:Def} satisfies
\begin{align}\label{HH02}
\|\nabla u(t)-\nabla u(s)\|_\infty\le C|t-s|^{\frac{1}{2}-\frac{d}{2p_0}}\nor f\nor_{\widetilde{\mL}^{p_0}(T)}.
\end{align}
\el
\br\label{Re002}\rm
By Remark \ref{Re001} and \eqref{HH02}, we further have that there is a version of the solution such that $u\in C([0,T];\bC^1)$.
\er
\begin{proof}
First, since $B,f\in\widetilde{\mL}^{p_0}(T)\subset\widetilde{\mL}^{p_0}_q(T)$, $\forall q$, we indeed have a unique solution $u$.
Set
\begin{align*}
g(s):=B\cdot\nabla u(s)+f(s).
\end{align*}
In view of \eqref{CCA022},
\begin{align*}
\nor g\nor_{\widetilde{\mL}^{p_0}(T)}
\le\nor b\nor_{\widetilde{\mL}^{p_0}(T)}\|\nabla u\|_{\mL^\infty_T}
+\nor f\nor_{\widetilde{\mL}^{p_0}(T)}\lesssim \nor f\nor_{\widetilde{\mL}^{p_0}(T)}.
\end{align*}
Then, for any $0\le s<t\le T$,
by \eqref{CCA01}, one sees that
\begin{align*}
\|\nabla u(t)-\nabla u(s)\|_\infty
&
=\left\|\int_s^t e^{-\lambda(t-r)}\nabla P^\sigma_{t-r}g(r)\dif r\right\|_\infty\\
&
+\left\|\int_0^s \(e^{-\lambda(t-r)}-e^{-\lambda(s-r)}\)\nabla P^\sigma_{t-r}g(r)\dif r\right\|_\infty\\
&
+\left\|\int_0^s e^{-\lambda(s-r)}\(\nabla P^\sigma_{t-r}-\nabla P^\sigma_{s-r}\)g(r)\dif r\right\|_\infty\\
&
=:\sI_1+\sI_2+\sI_3.
\end{align*}
By \eqref{BB00}, one sees that
\begin{align*}
\sI_1
&
\lesssim \int_s^t (t-r)^{-1/2-d/(2{p_0})}\nor g(r)\nor_{p_0}\dif r\no\\
&
\lesssim \int_s^t (t-r)^{-1/2-d/(2{p_0})}\dif r\nor g\nor_{\widetilde{\mL}^{p_0}(T)}\no\\
&
\lesssim|t-s|^{\frac{1}{2}-\frac{d}{2{p_0}}}\nor f\nor_{\widetilde{\mL}^{p_0}(T)}.%\label{KK00}
\end{align*}
For $\sI_2$, noting that $|e^{-x}-e^{-y}|\le |x-y|$ for any $x,y>0$, it follows from \eqref{BB00} that
\begin{align*}
\sI_2
&
\lesssim |t-s|\int_0^s (t-r)^{-1/2-d/(2{p_0})}\nor g(r)\nor_{p_0}\dif r\\
&
\lesssim|t-s|\nor g\nor_{\widetilde{\mL}^{p_0}(T)}\lesssim|t-s|\nor f\nor_{\widetilde{\mL}^{p_0}(T)}.
\end{align*}
For $\sI_3$, by \eqref{MM03} with $q=\infty$, we have
\begin{align*}
\sI_3
&
\lesssim \int_0^s([(t-s)^{\frac12}(s-r)^{-\frac12}]\wedge1)(s-r)^{-1/2-d/(2p_0)}\dif r\nor g\nor_{\widetilde{\mL}^{p_0}(T)}.
\end{align*}
By a change of variable, we have
\begin{align*}
\sI_3&\lesssim |t-s|^{\frac12-\frac{d}{2p_0}} \int_0^\infty ([(s-r)^{-\frac12}]\wedge1)(s-r)^{-1/2-d/(2p_0)}\dif r\nor f\nor_{\widetilde{\mL}^{p_0}(T)}\\
&\lesssim |t-s|^{1/2-d/(2p_0)}\nor f\nor_{\widetilde{\mL}^{p_0}(T)}
\end{align*}
and complete the proof.

\end{proof}
Moreover, we also have an estimate of time regularity for the solution to the following Cauchy problem.
\bl\label{Lemma3.12}
Assume \eqref{NBB03}. Let $\varphi\in C^\infty_b$ and let $u$ be the unique solution to the following Cauchy problem on $[0,T]$ in the sense of Definition \ref{Pre4:Def}
\begin{align}\label{CE00}
\p_t u=a_{ij}\p_i\p_j u+B\cdot\nabla u,\quad u_0=\varphi.
\end{align}
Then there is a constant $C=C(\Xi)$ such that for all $0\le s<t\le T$,
\begin{align}\label{NN01}
\|\nabla u(t)-\nabla u(s)\|_\infty\le C (t-s)^{\frac12-\frac{d}{2p_0}}s^{-1+\frac{d}{2p_0}}\|\varphi\|_\infty.
\end{align}

\el
%\br\rm
%Under the a priori estimates \eqref{NN00} and \eqref{NN01},  one can easy obtain that there is a unique mild solution to \eqref{CE00} for $\widetilde{L}^q$-initial data in some time weighted space with norm $\sup_{t\in[0,T]}t^{\delta}\|u(t)\|_{C^\beta}$.
%\er
\begin{proof}
First, by \eqref{CCA01}, we have
\begin{align}\label{MM07}
u(t)=\int_0^t P^\sigma_{t-s}(B\cdot \nabla u)(s)\dif s+P^\sigma_t \varphi,
\end{align}
which implies that
\begin{align*}
\|\nabla u(t)\|_\infty
&
\lesssim \int_0^t (t-s)^{-\frac{1}{2}-\frac{d}{2p_0}}\nor B\cdot \nabla u(s)\nor_{p_0}\dif s+t^{-\frac{1}{2}}\nor\varphi\nor_\infty\\
&
\lesssim \int_0^t (t-s)^{-\frac{1}{2}-\frac{d}{2p_0}}\|\nabla u(s)\|_\infty\dif s+t^{-\frac{1}{2}}\nor\varphi\nor_\infty
\end{align*}
because of \eqref{BB00}.
Hence, by Gronwall's inequality, we have
\begin{align}\label{MM06}
\|\nabla u(t)\|_\infty\lesssim t^{-\frac{1}{2}}\nor\varphi\nor_\infty.
\end{align}
 \iffalse
From \eqref{MM07}, for any $0\le s<t\le T$,
\begin{align*}
\|u(t)-u(s)\|_\infty\le& \int_s^t\|P_{t-r} (b\cdot\nabla u)(r)\|_\infty\dif r+\int_0^s\|(P_{t-r}-P_{s-r})b\cdot\nabla u(r)\|_\infty\dif r\\
&+\|(P_t-P_s)\varphi\|_\infty.
\end{align*}
It follows from \eqref{BB00}, \eqref{MM03} and \eqref{MM06} that
\begin{align*}
\|u(t)-u(s)\|_\infty\lesssim& \int_s^t(t-r)^{-\frac{d}{2p}}\|\nabla u(r)\|_\infty\dif r\\
&
+\int_0^s\Big\{\[(t-s)(s-r)^{-1}\]\wedge1\Big\}(s-r)^{-\frac{d}{2p}}\|\nabla u(r)\|_\infty\dif r\\
&
+\Big\{\[(t-s)s^{-1}\]\wedge1\Big\}s^{-\frac{d}{2q}}\nor\varphi\nor_q\\
\lesssim
&
\Big(\int_s^t(t-r)^{-\frac{d}{2p}}r^{-\frac{1}{2}-\frac{d}{2q}}\dif r+(t-s)^{\frac{1}{2}}\int_0^s(s-r)^{-\frac{1}{2}-\frac{d}{2p}}r^{-\frac{1}{2}-\frac{d}{2q}}\dif r\\
&
+(t-s)^{\frac{1}{2}}s^{-\frac{1}{2}-\frac{d}{2q}}\Big)\nor\varphi\nor_q\\
\lesssim
& (t-s)^{\frac{1}{2}}s^{-\frac{1}{2}-\frac{d}{2q}}\nor\varphi\nor_q,
\end{align*}
which is \eqref{NN00} with the following observation
\begin{align*}
\|u(t)-u(s)\|_\infty\le\|u(t)\|_\infty+\|u(s)\|_\infty\lesssim(t^{-\frac{d}{2q}}+s^{-\frac{d}{2q}})\nor\varphi\nor_q\lesssim s^{-\frac{d}{2q}}\nor\varphi\nor_q.
\end{align*}
\fi
By \eqref{MM07}, \eqref{BB00} and \eqref{MM03}, one sees that
\begin{align*}
\|\nabla u(t)-\nabla u(s)\|_\infty\le& \int_s^t\|\nabla P^\sigma_{t-r} (B\cdot\nabla u)(r)\|_\infty\dif r+\int_0^s\|\nabla(P^\sigma_{t-r}-P^\sigma_{s-r})B\cdot\nabla u(r)\|_\infty\dif r\\
&+\|\nabla(P^\sigma_t-P^\sigma_s)\varphi\|_\infty.\\
\lesssim& \int_s^t(t-r)^{-\frac12-\frac{d}{2p_0}}\|\nabla u(r)\|_\infty\dif r\\
&+\int_0^s\Big\{\[(t-s)^{\frac12}(s-r)^{-\frac12}\]\wedge1\Big\}(s-r)^{-\frac12-\frac{d}{2p_0}}\|\nabla u(r)\|_\infty\dif r\\
&+\Big\{\[(t-s)^{\frac12}s^{-\frac12}\]\wedge1\Big\}s^{-\frac{1}{2}}\|\varphi\|_\infty\\
:=&\sI_1+\sI_2+\sI_3.
\end{align*}
Then, noting that $x\wedge1\le x^\theta$ for all $x\ge0$ and $\theta\in[0,1]$, we have
\begin{align*}
\sI_3\lesssim \left\{\[(t-s)^{\frac12}s^{-\frac12}\]\wedge1\right\}s^{-\frac{1}{2}}\|\varphi\|_\infty
\lesssim (t-s)^{\frac12-\frac{d}{2p_0}}s^{-1+\frac{d}{2p_0}}\|\varphi\|_\infty.
\end{align*}

Based on \eqref{MM06} and a change of variable,
\begin{align*}
\sI_1&\lesssim \|\varphi\|_\infty\int_s^t(t-r)^{-\frac12-\frac{d}{2p_0}}r^{-\frac12}\dif r\lesssim (t-s)^{\frac12-\frac{d}{2p_0}}s^{-\frac12}\|\varphi\|_\infty\\
&\lesssim (t-s)^{\frac12-\frac{d}{2p_0}}s^{-1+\frac{d}{2p_0}}\|\varphi\|_\infty
\end{align*}
since $p_0>d$.
For $\sI_2$, again by \eqref{MM06}, we divide $[0,s]$ into $[0,s/2)$ and $[s/2,s]$ and have
\begin{align*}
\frac{\sI_2}{\|\varphi\|_\infty}&\lesssim\(\int_0^{s/2}+\int_{s/2}^s\)\Big\{\[(t-s)^{\frac12}(s-r)^{-\frac12}\]\wedge1\Big\}(s-r)^{-\frac12-\frac{d}{2p_0}}r^{-\frac12}\dif r\\
&\lesssim (t-s)^{\frac12-\frac{d}{2p_0}}s^{-1}\int_0^{s/2} r^{-\frac12}\dif r+s^{-\frac12}\int_{s/2}^s \Big\{\[(t-s)^{\frac12}(s-r)^{-\frac12}\]\wedge1\Big\}(s-r)^{-\frac12-\frac{d}{2p_0}}\dif r\\
&\lesssim (t-s)^{\frac12-\frac{d}{2p_0}}s^{-\frac12}+s^{-\frac12}\int_{0}^s \Big\{\[(t-s)^{\frac12}r^{-\frac12}\]\wedge1\Big\}r^{-\frac12-\frac{d}{2p_0}}\dif r,
\end{align*}
{where we used the fact that
\begin{align*}
&\left[\Big((t-s)^{\frac12}(s-r)^{-\frac12}\Big)\wedge1\right](s-r)^{-\frac12-\frac{d}{2p_0}}\\
&\le (t-s)^{\frac12-\frac{d}{2p_0}}(s-r)^{-1}\le 2 (t-s)^{\frac12-\frac{d}{2p_0}}s^{-1}\quad \forall r\in[0,s/2).
\end{align*}
}
 From a change of variable, we have
\begin{align*}
\sI_2
&\lesssim \Big[(t-s)^{\frac12-\frac{d}{2p_0}}s^{-\frac12}
+s^{-\frac12}(t-s)^{1-\frac12-\frac{d}{2p_0}}\int_{0}^\infty (r^{-\frac12}\wedge1)r^{-\frac12-\frac{d}{2p_0}}\dif r\Big]\|\varphi\|_\infty\\
&\lesssim (t-s)^{\frac12-\frac{d}{2p_0}}s^{-\frac12}\|\varphi\|_\infty\lesssim(t-s)^{\frac12-\frac{d}{2p_0}}s^{-1+\frac{d}{2p_0}}\|\varphi\|_\infty
\end{align*}
since $p_0\in(d,\infty)$ and complete the proof.

\end{proof}

\section{Convergence rate of the total variation}\label{Section4}
In this section, under {\bf (H$_\sigma$)}, {\bf (H$_b^1$)} and {\bf (H$_b^2$)}, we will derive the convergence rate of $\|\mu^\eps-\mu\|_{var}$.
Recall that on the probability space $(\Omega,\sF,\mP,(\sF_s)_{s\ge0})$ we have a unique strong solution $(X^\eps_\cdot,X_\cdot)$ to the following systems
\begin{align}\label{ori:DDSDE}
\dif X^\eps_t=b(t/\eps,X^\eps_t,\mu^\eps_t)\dif t+\sigma(X^\eps_t)\dif W_t, \ \ X^\eps_0=\xi,
\end{align}
and
\begin{align}\label{ave:DDSDE}
\dif X_t=\bar{b}(X_t,\mu_t)\dif t+\sigma(X_t)\dif W_t, \ \ X_0=\xi,
\end{align}
where $\mu^\eps_t$ and $\mu_t$ are the distributions of $X^\eps_t$ and $X_t$ respectively.
For simplicity, in the sequel, let
\begin{align*}
 b_\eps(t):=b(t/\eps)~~(\eps>0), \ \ \text{and}\ \ b_0:=\bar{b}.
\end{align*}
For any $x\in\mR^d$ and $t\geq s\geq0$,
let $(Y^\eps_{s,t}(x),Y_{s,t}(x))$ be the unique
strong solution to the following SDEs
\begin{align*}
\dif Y^\eps_{s,t}(x)=b_\eps(t,Y^\eps_{s,t}(x),\mu^\eps_t)\dif t+\sigma(Y^\eps_{s,t}(x))\dif W_t,\quad Y^\eps_{s,s}(x)=x
\end{align*}
and
\begin{align*}
\dif Y_{s,t}(x)=\bar{b}(Y_{s,t}(x),\mu_t)\dif t+\sigma(Y_{s,t}(x))\dif W_t,\quad Y_{s,s}(x)=x.
\end{align*}
Set $Y^\eps_{t}(x):=Y^\eps_{0,t}(x)$ and $Y_{t}(x):=Y_{0,t}(x)$ for all $t\geq0$ and $x\in\R^d$.
Let $P^{x,\eps}$ and $P^x$ denote the distributions of $Y^{\eps}_\cdot(x)$ and $Y_\cdot(x)$ in $C([0,T];\mR^d)$ respectively.
Based on the strong uniqueness of the above SDEs, we have
\begin{align}\label{DStar00}
\int_{\mR^d} P^{x,\eps}~~\mP\circ\xi^{-1}(\dif x)=\mP\circ (X^\eps_\cdot)^{-1}\quad \text{and}\quad\int_{\mR^d} P^{x}~~\mP\circ\xi^{-1}(\dif x)=\mP\circ (X_\cdot)^{-1}.
\end{align}
Therefore, the estimates in Section \ref{Sec3.1} hold for $X^\eps$ and $X$,
where the constants are independent of $\eps$, since
\begin{align*}
\sup_{\eps\ge0}\sup_{\mu\in\cP(\mR^d)}\nor b_\eps(\cdot,\mu)\nor_{\widetilde{\mL}^{p_0}_T}<\infty.
\end{align*}
Moreover, for any $t\in\mR_+$ and $\varphi\in C^\infty_b$, consider the following Kolmogorov backward equation
%with initial $\varphi\in C_b^\infty(\mR^d)$
\begin{align}\label{BKeq}
\p_s u^t+a_{ij}\p_i\p_j u^t+b_0(\cdot,\mu_s)\cdot\nabla u^t=0,
\end{align}
with final condition
$$u^t(t)=\varphi.$$
By Proposition \ref{well-PDE} and \ref{Pro:PR}, there exists a unique solution $u^t$ to \eqref{BKeq}, which is given by
$$
u^t(s,x)=\mE\varphi(Y_{s,t}(x)).
$$
Define $\tilde{u}(s,x)=u^t(t-s,x)$. Then $\tilde{u}$ is the solution to
\begin{equation*}
  \p_s\tilde{u}=a_{ij}\p_i\p_j \tilde{u}
  +b_0(\cdot,\mu_{t-s})\cdot\nabla \tilde{u}, \quad \tilde{u}_0=\varphi.
\end{equation*}
By Lemma \ref{Lemma3.12}, we have
\begin{align}\label{NN04}
\|\nabla u^t(s)\|_\infty \lesssim (t-s)^{-\frac12}\|\varphi\|_\infty
\end{align}
and
\begin{align}\label{FStar00}
\|\nabla u^t(s_1)-\nabla u^t(s_2)\|_\infty\lesssim |s_1-s_2|^{\frac12-\frac{d}{2p_0}}(s_1\wedge s_2)^{-1+\frac{d}{2p_0}}\|\varphi\|_\infty.
\end{align}
Then, for any $t\in[0,T]$, by applying the generalized It\^o formula \eqref{GIF001} to $u^t(s,Y^\eps_s(x))$, one sees that
\begin{align}\label{NCZ00}
\mE \varphi(Y^\eps_t(x))-u^t(0,x)
=\mE\int_0^t\Big(b_\eps(s,Y^\eps_s(x),\mu^\eps_s)
-b_0(Y^\eps_s(x),\mu_s)\Big)\cdot\nabla u^t(s,Y^\eps_s(x))\dif s.
\end{align}
Noting that $u^t(0,x)=\mE \varphi(Y_t(x))$, by \eqref{DStar00}, we have
\begin{align}\label{NN03}
\begin{split}
|\mE \varphi(X^\eps_t)-\mE \varphi(X_t)|&=\Big|\int_{\mR^d}\(\mE \varphi(Y^\eps_t(x))-\mE \varphi(Y_t(x))\)\mP\circ\xi^{-1}(\dif x)\Big|\\
&\le\sup_x\Big|\mE\int_0^t\Big(b_\eps(s,Y^\eps_s(x),\mu^\eps_s)-b_0(Y^\eps_s(x),\mu_s)\Big)\cdot\nabla u^t(s,Y^\eps_s(x))\dif s\Big|.
\end{split}
\end{align}

Here is the main result of this section:
\bt\label{S4:Main}
Under the conditions {\bf (H$_\sigma$)} and {\bf (H$_b^1$)}--{\bf (H$_b^2$)}, for any $T>0$,
there is a constant $C=C(\kappa_0,\kappa_1,d,T,p_0,\beta)>0$
such that for all $\eps>0$ and $t\in[0,T]$,
\begin{align}\label{LL00}
\|\mu^\eps_t-\mu_t\|_{var}\le C \inf_{h>0}\Big(h^{\frac12-\frac{d}{2p_0}}+\omega\left(\frac{h}{\eps}\right)\Big).
\end{align}
\et
\begin{proof}
For simplicity, in the whole proof, we assume $\|\varphi\|_\infty=1$ and drop the superscript $t$ from $u^t$.
First, let
\begin{align*}
\cB^\eps:=\Big|\mE\int_0^t\Big(b_\eps(s,Y^\eps_s(x),\mu^\eps_s)-b_0(Y^\eps_s(x),\mu_s)\Big)\cdot\nabla u(s,Y^\eps_s(x))\dif s\Big|
\end{align*}
and
\begin{align*}
\cE^\eps_h:=\Big|\mE\int_0^t\Big(b_\eps(s,Y^\eps_{\pi_h(s)}(x),\mu^\eps_{\pi_h(s)})-b_0(Y^\eps_{\pi_h(s)}(x),\mu^\eps_{\pi_h(s)})\Big)\cdot\nabla u({\pi_h(s)},Y^\eps_{\pi_h(s)}(x))\dif s\Big|,
\end{align*}
where $h\in(0,1)$.
For any map $f:\mR_+\times\mR^d\times\cP(\mR^d)\to\mR^d$ and $h\in(0,1)$, define
\begin{align*}
&U_{1,h}^\eps(f):=\Big|\mE\int_0^t\((f\cdot\nabla u)(s,Y^\eps_s(x),\mu^\eps_s)-(f\cdot\nabla u)(s,Y^\eps_{\pi_h(s)}(x),\mu^\eps_s)\)\dif s\Big|\\
&U_{2,h}^\eps(f):=\Big|\mE\int_0^t\Big(
\[f(s,Y^\eps_{\pi_h(s)}(x),\mu^\eps_s)
-f(s,Y^\eps_{\pi_h(s)}(x),\mu^\eps_{\pi_h(s)})\]\cdot\nabla u(s,Y^\eps_{\pi_h(s)}(x))\Big)\dif s\Big|\\
&U_{3,h}^\eps(f):=\Big|\mE\int_0^t\Big(
f(s,Y^\eps_{\pi_h(s)}(x),\mu^\eps_{\pi_h(s)})\cdot\[\nabla u(s,Y^\eps_{\pi_h(s)}(x))-\nabla u(\pi_h(s),Y^\eps_{\pi_h(s)}(x))\]\Big)\dif s\Big|.
\end{align*}
Then, we have
\begin{align*}
\cB^\eps\le&\cE^\eps_h+\sum_{i=1}^3 \[U_{i,h}^\eps(b_\eps)+U_{i,h}^\eps(b_0)\]\\
&
+\Big|\mE\int_0^t\Big(b_0(Y^\eps_s(x),\mu^\eps_s)
-b_0(Y^\eps_s(x),\mu_s)\Big)\cdot\nabla u(s,Y^\eps_s(x))\dif s\Big|.
\end{align*}
It follows from \eqref{NN04}, \eqref{BB06} and {\bf (H$_b^1$)} that
\begin{align*}
&\quad\Big|\mE\int_0^t\Big(b_0(Y^\eps_s(x),\mu^\eps_s)
-b_0(Y^\eps_s(x),\mu_s)\Big)\cdot\nabla u(s,Y^\eps_s(x))\dif s\Big|\\
&\lesssim \int_0^ts^{-\frac{d}{2p_0}}(t-s)^{-\frac12}\|\mu^\eps_s-\mu_s\|_{var}\dif s,
\end{align*}
which implies that
\begin{align}\label{NN05}
\cB^\eps\lesssim  \cE^\eps_h+\sum_{i=1}^3 \[U_{i,h}^\eps(b_\eps)+U_{i,h}^\eps(b_0)\]+\int_0^ts^{-\frac{d}{2p_0}}(t-s)^{-\frac12}\|\mu^\eps_s-\mu_s\|_{var}\dif s.
\end{align}

Now, we divide the rest of the proof into two steps. In Step 1, we estimate $U_{i,h}^\eps(b_\eps)+U_{i,h}^\eps(b_0)$, $i=1,2,3$, one by one; In Step 2, we calculate $\cE^\eps_h$ under the assumption {\bf (H$_b^2$)}.

{\bf(Step 1)} We only estimate $U_{i,h}^\eps(b_\eps)$, for $U_{i,h}^\eps(b_0)$ we can proceed in the same way.
First, we estimate $U_{1,h}^\eps(b_\eps)$. By \eqref{NN000} and \eqref{NN04}, we have
\begin{align*}
U_{1,h}^\eps(b_\eps)
&
=\Big|\int_0^t\(P^{Y^\eps}_s(b_\eps\cdot\nabla u)
(s,\cdot,\mu^\eps_s)(x)-P^{Y^\eps}_{\pi_h(s)}
(b_\eps\cdot\nabla u)(s,\cdot,\mu^\eps_s)(x)\)\dif s\Big|\\
&
\lesssim \int_0^t \[(h^{\alpha}(\pi_h(s))^{-\alpha})\wedge1\]
(\pi_h(s))^{-\frac{d}{2p_0}}\nor (b_\eps\cdot\nabla u)(s,\cdot,\mu^\eps_s)\nor_{p_0}\dif s\\
&
\lesssim h^\alpha\int_0^t (\pi_h(s))^{-\alpha}
(\pi_h(s))^{-\frac{d}{2p_0}}(t-s)^{-\frac12}\dif s,
\end{align*}
where $\alpha=1/2-d/(2p_0)$.
Noting that $\pi_h(s)=s$ for $s\le h$, $\pi_h(s)\le s$ for all $s\in[0,T]$ and $1-\alpha-d/(2p_0)-1/2=0$, one sees that
\begin{align*}
U_{1,h}^\eps(b_\eps)\lesssim h^{\frac12-\frac{d}{2p_0}}.
\end{align*}
For $U_{2,h}^\eps(b_\eps)$, by \eqref{BB06}, \eqref{NN04} and {\bf (H$^1_b$)}, we have
\begin{align*}
U_{2,h}^\eps(b_\eps)\lesssim \int_0^t (\pi_h(s))^{-\frac{d}{2p_0}}\|\mu^\eps_s-\mu^\eps_{\pi_h(s)}\|_{var}(t-s)^{-\frac12}\dif s.
\end{align*}
%Noting that $1/2-d/(2p_0)\le1/2$,
It follows from \eqref{LL02} with $\delta=\frac{d}{2p_0}$ that
\begin{align*}
U_{2,h}^\eps(b_\eps)\lesssim h^{\frac12-\frac{d}{2p_0}}\int_0^t (\pi_h(s))^{-\frac{d}{2p_0}}(t-\pi_h(s))^{-\frac12}\dif s\lesssim h^{\frac12-\frac{d}{2p_0}},
\end{align*}
since $p_0>d$.
Finally, in view of \eqref{BB06} and \eqref{FStar00}, because $p_0<\infty$, we have
\begin{align*}
U_{3,h}^\eps(b_\eps)\lesssim h^{\frac12-\frac{d}{2p_0}}\int_0^t (\pi_h(s))^{-\frac{d}{2p_0}}(t-\pi_h(s))^{-1+\frac{d}{2p_0}}\dif s
\lesssim h^{\frac12-\frac{d}{2p_0}}
\end{align*}
and obtain that
\begin{align}\label{NN06}
\sum_{i=1}^3\(U_{i,h}^\eps(b_\eps)+U_{i,h}^\eps(\bar{b})\)\lesssim h^{\frac12-\frac{d}{2p_0}}.
\end{align}

{\bf(Step 2)} Let $M:=[t/h]$. Without loss of generality we may assume that $M=t/h\in\mN$ and note that
\begin{align*}
\cE^\eps_h\le&\Big|\mE\int_{0}^{h}\Big(b_\eps(s,Y^\eps_{s}(x),\mu^\eps_{s})-b_0(Y^\eps_{s}(x),\mu^\eps_{s})\Big)\cdot\nabla u(s,Y^\eps_{s}(x))\dif s\Big|\\
&
+ \Big|\sum_{k=1}^{M-1}\mE\int_{kh}^{(k+1)h}
\Big(b_\eps(s,Y^\eps_{kh}(x),\mu^\eps_{kh})
-b_0(Y^\eps_{kh}(x),\mu^\eps_{kh})\Big)\cdot\nabla u(kh,Y^\eps_{kh}(x))\dif s\Big|\\
:=&\cE_1+\cE_2.
\end{align*}
From \eqref{BB06} and \eqref{NN04},
\begin{align*}
\cE_1\lesssim \int_{0}^{h} s^{-\frac{d}{2p_0}}(t-s)^{-\frac12}\dif s\lesssim h^{\frac12-\frac{d}{2p_0}}.
\end{align*}
By \eqref{NN04} and a change of variables, one sees that
\begin{align*}
\cE_2&\lesssim \sum_{k=1}^{M-1}(t-kh)^{-\frac12}\Big|\mE\int_{kh}^{(k+1)h}\Big(b_\eps(s,Y^\eps_{kh}(x),\mu^\eps_{kh})-b_0(Y^\eps_{kh}(x),\mu^\eps_{kh})\Big)\dif s\Big|\\
&\lesssim \sum_{k=1}^{M-1}(t-kh)^{-\frac12}\Big|\eps\mE\int_{kh/\eps}^{(k+1)h/\eps}\Big(b(s,Y^\eps_{kh}(x),\mu^\eps_{kh})-\bar{b}(Y^\eps_{kh}(x),\mu^\eps_{kh})\Big)\dif s\Big|.
\end{align*}
Based on the assumptions \eqref{in:00} and \eqref{BB06}, we have
\begin{align*}
\cE_2
&
\lesssim h\sum_{k=1}^{M-1}(t-kh)^{-\frac12}\omega\left(\frac{h}{\eps}\right)\mE H(Y^\eps_{kh}(x),\mu^\eps_{kh})\\
&
\lesssim h\sum_{k=1}^{M-1}(t-kh)^{-\frac12}\omega\left(\frac{h}{\eps}\right) (kh)^{-\frac{d}{2p_0}} \sup_\mu\nor H(\cdot,\mu)\nor_{p_0}\\
&
\lesssim \omega\left(\frac{h}{\eps}\right) \int_h^t (t-\pi_h(s))^{-\frac12}(\pi_h(s))^{-\frac{d}{2p_0}}\dif s
\lesssim \omega\left(\frac{h}{\eps}\right)
\end{align*}
and obtain that
\begin{align*}
|\mE \varphi(X^\eps_t)-\mE \varphi(X_t)|\lesssim\(h^{\frac12-\frac{d}{2p_0}}+\omega\left(\frac{h}{\eps}\right)\)+\int_0^ts^{-\frac{d}{2p_0}}(t-s)^{-\frac12}\|\mu^\eps_s-\mu_s\|_{var}\dif s
\end{align*}
because of \eqref{NN03}, \eqref{NN05} and \eqref{NN06}.
Finally, taking the supremum over all $\varphi\in C^\infty_b$ with $\|\varphi\|_\infty=1$ and
by Gronwall's inequality of Volterra type (see \cite[Example 2.4]{Zhang2010}), we complete the proof.

\end{proof}

\section{Proof of Theoerm \ref{in:Main} and \ref{in:Main2}}\label{Section5}
In this section, we consider the process $(X^\eps,W,X)$ on the probability space $(\Omega,\sF,(\sF_t)_{t\ge0}, \mP)$
which satisfies the following system in $\mR^d$:
\begin{align*}
X^\eps_t=\xi+\int_0^t b(\frac{s}{\eps},X^\eps_s,\mu^\eps_s)\dif s+\int_0^t\sigma(X^\eps_s)\dif W_s
\end{align*}
and
\begin{align*}
X_t=\xi+\int_0^t \bar{b}(X_s,\mu_s)\dif s+\int_0^t\sigma(X_s)\dif W_s,
\end{align*}
where $W$ is a standard $d$-dimensitional Brownian Motion, $\mu^\eps_t$ and $\mu_t$ are the distributions of $X^\eps_t$ and $X_t$ respectively and $(b,\bar{b},\sigma)$ satisfies the conditions {\bf (H$_b^1$)}-{\bf (H$_b^2$)} and {\bf (H$_\sigma$) }.
We set
\begin{align*}
X^0:=X,\ \ b_\eps(t):=b(t/\eps),\ \ \text{and} \ \ b_0:=\bar{b}.
\end{align*}

\begin{proof}[Proof of Theoerm \ref{in:Main}]
Set
\begin{align*}
B(t,x):=b_0(x,\mu_t)
\end{align*}
and consider the following backward parabolic PDE
$$
\p_tu+a_{ij}\p_i\p_j u-\lambda u+B\cdot\nabla u+B=0, ~~t\in[0,T],\ \ u(T)=0.
$$
Since $\nor B\nor_{\widetilde{\mL}^{p_0}(T)}
\le\sup\limits_\mu\nor \bar{b}(\cdot,\mu)\nor_{p_0}<\infty $, by Lemma \ref{lem210}, for $\lambda$ large enough there is a unique solution $u$ in the sense of Definition \ref{Pre4:Def} satisfying
\begin{align*}%\label{nablau1}
\|\nabla u\|_{\mL^\infty_T}\le \frac{1}{2}
\end{align*}
and for any $2/q+d/p_0<1$,
\begin{align*}
\nor\nabla^2 u\nor_{\widetilde{\mL}^{p_0}_q(T)}\le C,
\end{align*}
  which implies that for any $\lambda>0$
  \begin{align}\label{Khau1}
  \sup_{\eps\ge0}\mE \exp\left(\lambda \int_0^T|\nabla^2 u(t,X^{\eps}_t)|^2\dif t\right)<\infty,
\end{align}
where $X^0:=X$, because of \eqref{Kh02}. Moreover, by \eqref{HH02}, for all $s,t\in[0,T]$,
  \begin{align}\label{GG051}
  \|\nabla u(t)-\nabla u(s)\|_\infty \lesssim |t-s|^{1/2-d/(2p_0)}.
\end{align}
Define
$$
\Phi_t(x):=x+u(t,x)
$$
and
$$
Y^{\eps}_t:=\Phi_t({X}^{\eps}_t),\ \ Y_t:=\Phi_t(X_t).
$$
Then $\Phi_t$ is a $C^1$-diffeomorphism (see Remark \ref{Re002}) for any $t\in[0,T]$ with
\begin{align}\label{ZZ031}
\|\nabla\Phi\|_{\mL^\infty_T}+\|\nabla \Phi^{-1}\|_{\mL^\infty_T}\le 4.
\end{align}
By the generalized It\^o formula \eqref{GIF001}, we have
\begin{align*}
\dif Y_t=\lambda u(t,{X}_t)\dif t+(\sigma^*\nabla\Phi_t)({X}_t)\dif W_t
\end{align*}
and
\begin{align*}
\dif Y^{\eps}_t=\lambda u(t,X^\eps_t)\dif t+\(b_\eps(t,X^\eps_t,\mu^\eps_t)-b_0(X^\eps_t,\mu_t)\)\cdot\nabla \Phi_t\(X^{\eps}_t\)\dif t+(\sigma^*\nabla\Phi_t(X^\eps_t))\dif W_t,
\end{align*}
where $\sigma^*$ is the transpose of $\sigma$.
It follows from \eqref{ZZ031} that for any $t\in[0,T]$,
\begin{align*}
|X^\eps_t-X_t|^2\lesssim |Y^\eps_t-Y_t|^2
&
\lesssim \lambda\|\nabla u\|_{\mL^\infty_T}^2\int_0^t |X^\eps_s-X_s|^2\dif s\\
&
+\left[\int_0^t\((\sigma^*\nabla\Phi_s)(X^\eps_s)-(\sigma^*\nabla\Phi_s)(X_s)\)\dif W_s\right]^2\\
&
+\Big|\int_0^t\(b_\eps(s,X^\eps_s,\mu^\eps_s)-b_0(X^\eps_s,\mu_s)\)\cdot\nabla \Phi_s\(X^{\eps}_s\)\dif s\Big|^2.
\end{align*}
Set
\begin{align*}
A^{\eps}_t:=&\int_0^t\left(\cM(\nabla^2 u)(s,X_s)+\cM(\nabla^2 u)(s,X^\eps_s)+\|\nabla u\|_{\mL^\infty_T}\right)^2\dif s\\
&+\int_0^t\left(\cM(\nabla \sigma)(X_s)+\cM(\nabla \sigma)(X^\eps_s)+\|\sigma\|_\infty\right)^2\dif s
\end{align*}
and
\begin{align*}
\eta^\eps_t:=\Big|\int_0^t\(b_\eps(s,X^\eps_s,\mu^\eps_s)-b_0(X^\eps_s,\mu_s)\)\cdot\nabla \Phi_s\(X^{\eps}_s\)\dif s\Big|^2.
\end{align*}
Then, by \eqref{Pre1:Mix}, \eqref{Khau1} and {\bf (H$_\sigma$)}, we have
\begin{align}\label{Coffee}
\sup_\eps\mE \exp(A^{\eps}_T)<\infty.
\end{align}
We note that by \eqref{Pre1:Mnf}
\begin{align*}
&\quad
\left[\int_0^t\((\sigma^*\nabla\Phi)(X^\eps_s)-(\sigma^*\nabla\Phi)(X_s)\)\dif W_s\right]^2\\
&
\le \int_0^t|X^\eps_s-X_s|^2\dif A^{\eps}_s+M^\eps_t,
\end{align*}
where $M^\eps$ is a martingale.
Altogether, we have
\begin{align*}
|X^\eps_t-X_t|^2\lesssim\int_0^t|X^\eps_s-X_s|^2\dif s+\int_0^t|X^\eps_s-X_s|^2\dif A^{\eps}_s+M^\eps_t+\eta_t^\eps.
\end{align*}
Hence,
 by \eqref{Coffee} and the Stochastic Gronwall inequality (see Lemma 2.8 in \cite{Zh-Zh1} or \cite{Sc13}), one sees that for any $\ell\in(0,1)$,
\begin{align*}
\mE\(\sup_{t\in[0,T]}|X^\eps_t-X_t|^{2\ell}\)\lesssim \(\mE[\sup_{t\in[0,T]} \eta_t^\eps]\)^\ell.
\end{align*}

Combining \eqref{GG051}, \eqref{ZZ031} and \eqref{EE01}, we have
\begin{align}\label{GG07}
\mE[\sup_{t\in[0,T]}\eta_t^\eps]\lesssim
&
\mE\int_0^T\Big|b_0(X^\eps_s,\mu^\eps_s)-b_0(X^\eps_s,\mu_s)\Big|^2\dif s+ \inf_{h>0}\Big(h^{1-\delta}+h^{1-\frac{d}{p_0}}+\omega\left(\frac{h}{\eps}\right)\Big).
\end{align}
Taking $\delta<d/p_0$ in \eqref{GG07}, from \eqref{Kry02} and \eqref{LL00},
for any $2/q+d/p_0<1$, one sees that
\begin{align*}
\mE[\sup_{t\in[0,T]}\eta_t^\eps]
&
\lesssim\Big(\int_0^T\|\mu^\eps_s-\mu_s\|_{var}^q\dif s\Big)^{2/q} +\inf_{h>0}\Big(h^{1-\frac{d}{p_0}}
+\left(\omega\left(\frac{h}{\eps}\right)\right)^2\Big)\\
&
\lesssim\inf_{h>0}\Big(h^{1-\frac{d}{p_0}}
+\left(\omega\left(\frac{h}{\eps}\right)\right)^2\Big)
\end{align*}
and this completes the proof.

\end{proof}

%\section{Proof of Theoerm \ref{in:Main2}}\label{Section6}
In the rest of this section, we assume that
\begin{align*}
b_\eps(t,x,\mu)=b_\eps(t,x),\quad b_0(x,\mu)=b_0(x)
\end{align*}
and prove Theorem \ref{in:Main2}. The method is the same as the one of Theorem \ref{in:Main}, except for using the elliptic equation to construct the Zvonkin's transformation instead of the parabolic.

\begin{proof}[Proof of Theorem \ref{in:Main2}]
Consider the following elliptic PDE
\begin{align}\label{Ell00}
a_{ij}\p_i\p_j u-\lambda u+b_0\cdot\nabla u+b_0=0.
\end{align}
Noting that $\nor b_0\nor_{\widetilde{\mL}^{p_0}(T)}\le\|b\|_{L^\infty(\mR_+;\widetilde{L}^{p_0})} $ and by \eqref{CC02} for $\lambda$ large enough, we have
\begin{align*}%\label{nablau}
\|\nabla u\|_{\infty}\le \frac{1}{2}
\end{align*}
and
\begin{align*}
\nor\nabla^2 u\nor_{p_0}\le C \|b_0\|_{\widetilde{L}^{p_0}},\quad \forall p_0>d
\end{align*}
It follows by \eqref{Kh02} that for any $\lambda>0$
  \begin{align}\label{Khau}
  \sup_{\eps\ge0}\mE \exp\left(\lambda \int_0^T|\nabla^2 u(X^{\eps}_t)|^2\dif t\right)<\infty.
\end{align}
Define
$$
\Phi(x):=x+u(x)
$$
and
$$
Y^{\eps}_t:=\Phi({X}^{\eps}_t),\ \ Y_t:=\Phi(X_t).
$$
Then $\Phi$ is a $C^1$-diffeomorphism. Again by the generalized It\^o formula \eqref{GIF001}, we have
\begin{align*}
\dif Y_t=\lambda u({X}_t)\dif t+(\sigma^*\nabla\Phi)({X}_t)\dif W_t
\end{align*}
and
\begin{align*}
\dif Y^{\eps}_t=\lambda u(X^\eps_t)\dif t+\(b_\eps(t,X^\eps_t)-b_0(X^\eps_t)\)\cdot\nabla \Phi\(X^{\eps}_t\)\dif t+(\sigma^*\nabla \Phi)(X^\eps_t)\dif W_t.
\end{align*}
Then, we have
\begin{align*}
|X^\eps_t-X_t|^2\lesssim \int_0^t |X^\eps_s-X_s|^2\dif A^\eps_s+M^\eps_t+\eta_t^\eps,
\end{align*}
where $(M^\eps_t)_{t\ge0}$ is a martingale,
\begin{align*}
A^{\eps}_t=&t+\int_0^t\left(\cM(\nabla^2 u)(X_s)+\cM(\nabla^2 u)(X^\eps_s)+\|\sigma\|_\infty\right)^2\dif s\\
&+\int_0^t\left(\cM(\nabla \sigma)(X_s)+\cM(\nabla \sigma)(X^\eps_s)+\|\nabla u\|_{\infty}\right)^2\dif s
\end{align*}
and
\begin{align*}
\eta^\eps_t=\Big|\int_0^t\(b(s/\eps,X^\eps_s)-\bar {b}(X^\eps_s)\)\cdot\nabla \Phi\(X^{\eps}_s\)\dif s\Big|^2.
\end{align*}
Then, in view of \eqref{Pre1:Mix} and \eqref{Khau}, we have
\begin{align*}
\sup_\eps\mE \exp(A^{\eps}_T)<\infty,
\end{align*}
which implies that for any $\ell\in(0,1)$,
\begin{align*}
\mE\(\sup_{t\in[0,T]}|X^\eps_t-X_t|^{2\ell}\)\lesssim \(\mE\sup_{t\in[0,T]} \eta_t^\eps\)^\ell
\end{align*}
because of the Stochastic Gronwall inequality and we complete the proof by \eqref{EE01} with $\alpha=1$.
\end{proof}

\section*{}
\subsection*{Acknowledgments}
The first author would like to acknowledge the
warm hospitality of Bielefeld University. And
we would also like to thank Dr. Chengcheng Ling for many useful discussions.

\begin{appendix}
\renewcommand{\thetable}{A\arabic{table}}
\numberwithin{equation}{section}

\section{}
In this appendix, we prove the claim \eqref{NNA00} in Example \ref{Ex2}. To this end, we need the following lemma.
\bl\label{lem61}
Let
$$
h(t):=\int_{\R} B(\sin(\xi t))\nu(\dif \xi).
$$
and
$$
\bar{h}:=\Big(\frac{1}{2\pi}\int_0^{2\pi}
B(\sin(\xi \tau))\dif\tau\Big)\nu(\R\setminus\{0\})
+B(0)\nu(\{0\}),
$$
where $B: [-1,1]$ is measurable and $\nu$ is a finite measure on $\mR$.
Assume that there is a constant $C_B>0$ such that
\begin{align*}
|B(u)|\le C_B,\quad \forall u\in[-1,1].
\end{align*}
Then, for any $t,T\in\mR_+$,
\begin{align*}
\Big|\frac{1}{T}\int_{t}^{t+T}\(h(s)-\bar{h}\)\dif s\Big|\le \frac{4\pi C_B}{T}\int_{\mR\setminus\{0\}}\frac{\nu(\dif \xi)}{|\xi|}.
\end{align*}

\el

\begin{proof}[Proof of Lemma \ref{lem61}]
If $\int_{\mR\setminus\{0\}}\frac{\nu(\dif \xi)}{|\xi|}=\infty$, this is trivial. So, we assume that $\int_{\mR\setminus\{0\}}\frac{\nu(\dif \xi)}{|\xi|}<\infty$. First, one sees that
\begin{align*}
\sI
&
:=\left|\frac{1}{T}\int_{t}^{t+T}\(h(s)
-\bar{h}\)\dif s\right|\\
&
=\left|\frac{1}{T}\int_{t}^{t+T}\int_{\mR\setminus\{0\}} B(\sin(\xi s))\nu(\dif\xi)\dif s-\frac{1}{2\pi}\int_t^{t+2\pi} B(\sin(\tau))\nu(\mR\setminus\{0\})\dif \tau\right|\\
&=\left|\int_{\mR\setminus\{0\}} \Big[\frac{1}{T}\int_{t}^{t+T}B(\sin(\xi s))\dif s-\frac{1}{2\pi}\int_t^{t+2\pi} B(\sin(\tau))\dif \tau\Big]\nu(\dif\xi)\right|,
\end{align*}
by Fubini's theorem. From a change of variable, we have
\begin{align*}
\sI
=&\left|\int_{\mR\setminus\{0\}} \Big[\frac{1}{T|\xi|}\int_{t\xi}^{(t+T)\xi}B(\sin s)\dif s-\frac{1}{2\pi}\int_{t}^{t+2\pi} B(\sin \tau)\dif \tau\Big]\nu(\dif\xi)\right|,
%\le&\left|\int_0^\infty \Big[\frac{1}{T\xi}\int_{t\xi}^{(t+T)\xi}B(\sin s)\dif s-\frac{1}{2\pi}\int_{t}^{t+2\pi} F\left(\sin \tau,
%\Phi(x)\right)\dif \tau\Big]\nu(\dif\xi)\right|\\
%&+\left|\int_{-\infty}^0 \Big[\frac{1}{T|\xi|}\int_{(t+T)\xi}^{t\xi}B(\sin s)\dif s-\frac{1}{2\pi}\int_{t}^{t+2\pi} F\left(\sin \tau,
%\Phi(x)\right)\dif \tau\Big]\nu(\dif\xi)\right|
\end{align*}
where $\int_a^b:=-\int_b^a$ if $a>b$. %$\int_0^\infty:=\int_{(0,\infty)}$ and $\int_{-\infty}^0:=\int_{(-\infty,0)}$.
Set
\begin{align*}
G:=\int_{0}^{2\pi} B(\sin s)\dif s=\int_{t}^{t+2\pi} B(\sin s)\dif s,\quad\forall t\in\mR.
\end{align*}
Then,  noting that $s\to\sin s$ has a period $2\pi$, we have
\begin{align*}
\int_{t\xi}^{(t+T)\xi}B(\sin s)\dif s&=\Big[\frac{T|\xi|}{2\pi}\Big]G+\int_{\sgn(\xi)\left[\frac{T|\xi|}{2\pi}\right]2\pi+t\xi}^{T\xi+t\xi} B(\sin s)\dif s\\
&:=\Big[\frac{T|\xi|}{2\pi}\Big]G+H_t(\xi)
\end{align*}
where $\sgn(\xi):=\xi/|\xi|$, which implies that
\begin{align*}
\sI
=&\left|\int_{\mR\setminus\{0\}} \Big[\frac{1}{T|\xi|}\Big(\Big[\frac{T|\xi|}{2\pi}\Big]G+H_t(\xi)\Big)-\frac{1}{2\pi}G\Big]\nu(\dif\xi)\right|\\
\le&\int_{\mR\setminus\{0\}} \Big|\frac{1}{T|\xi|}\Big[\frac{T|\xi|}{2\pi}\Big]-\frac{1}{2\pi}\Big|\nu(\dif\xi)G+\int_{\mR\setminus\{0\}} \frac{1}{T|\xi|}H_t(\xi)\nu(\dif\xi).
\end{align*}
We note that
\begin{align*}
\Big|\frac{1}{T|\xi|}\Big[\frac{T|\xi|}{2\pi}\Big]-\frac{1}{2\pi}\Big|=\frac{1}{T|\xi|}\Big|\Big[\frac{T|\xi|}{2\pi}\Big]-\frac{T|\xi|}{2\pi}\Big|\le \frac{1}{T|\xi|}
\end{align*}
and
\begin{align*}
G\vee H_t(\xi)\le \int_0^{2\pi}|B(\sin s)|\dif s\le 2\pi C_B.
\end{align*}
Therefore, we have
\begin{align*}
\sI\le \frac{4\pi C_B}{T}\int_{\mR\setminus\{0\}}\frac{\nu(\dif \xi)}{|\xi|}
\end{align*}
and complete the proof.

\end{proof}
Now we can give the
\begin{proof}[Proof of \eqref{NNA00}]
Since {\bf (H$_b^1$)} holds for $b$ obviously,
it suffice to show that {\bf (H$_b^2$)} holds.
We note that in Example \ref{Ex2}
\begin{align*}
|F(u,x)|\le|F(u,0)|+|F(u,0)-F(t,x)|\le L_F+L_F|x|
\end{align*}
because of \eqref{in:EX2}, which implies that
\begin{align*}
|F\(u,\int_{\mR^d}\phi(x,y)\mu(\dif y)\)|\le L_F(1+\int_{\mR^d}|\phi(x,y)|\mu(\dif y)).
\end{align*}
Hence, by Lemma \ref{lem61}, we see that
\begin{align*}
\left|\frac{1}{T}\int_{t}^{t+T}\(b(s,x,\mu)-\bar{b}(x,\mu)\)\dif s\right|\le \frac{4\pi L_F}{T}\int_{\mR\setminus\{0\}}\frac{\nu(\dif \xi)}{|\xi|}H(x,\mu),
\end{align*}
where $H(x,\mu)=1+\int_{\mR^d}|\phi(x,y)|\mu(\dif y)$. It is easy to see that
\begin{align*}
\sup_\mu\nor H(\cdot,\mu)\nor_{p_0}\le \|1\|_\infty+\int_{\mR^d}\nor\phi(\cdot,y)\nor_{p_0}\mu(\dif y)\le1+\sup_{y}\nor\phi(\cdot,y)\nor_{p_0}.
\end{align*}
This completes the proof.
\end{proof}

\end{appendix}

\end{document}